\newtheorem{main}[subsection]{Main Theorem}
\newtheorem{thm}[subsection]{Theorem}
\newtheorem{cor}[subsection]{Corollary}
\newtheorem{conj}[subsection]{Conjecture}
\newtheorem{remark}[subsection]{Remark}
\theoremstyle{definition}
\numberwithin{equation}{section}
\def\cT{{\cal T}}
\def\cG{{\cal G}}
\def\cA{{\cal A}}
\def\ra{\rightarrow}
\def\cA{{\mathcal A}}
\def\cC{{\mathcal C}}
\def\cG{{\mathcal G}}
\def\cH{{\mathcal H}}
\def\cI{{\mathcal I}}
\def\cS{{\mathcal S}}
\def\cT{{\mathcal T}}
\def\cV{{\mathcal V}}
\def\cW{{\mathcal W}}
\def\gd{{\mathfrak d}}
\def\gg{{\mathfrak g}}
\def\gl{{\mathfrak l}}
\def\go{{\mathfrak o}}
\def\gp{{\mathfrak p}}
\def\gs{{\mathfrak s}}
\newfont{\german}{eufm10}
\begin{document}
\pagestyle{plain}

\title
{$N=4$ superconformal algebras and diagonal cosets}

\author{Thomas Creutzig}

\author{Boris Feigin} 

\author{Andrew R. Linshaw}

\address{Department of Mathematical and Statistical Sciences, University of Alberta, Edmonton, Alberta  T6G 2G1, Canada.}
\email{creutzig@ualberta.ca}

\address{National Research University Higher School of Economics, Russian Federation, International Laboratory of Representation Theory, Mathematical Physics, Russia, Moscow, 101000, Myasnitskaya ul., 20, Landau Institute for Theoretical Physics, Russia, Chernogolovka, 142432, pr. Akademika Semenova, 1a.}
\email{borfeigin@gmail.com}

\address{Department of Mathematics, University of Denver, Denver, CO 80208, U.S.A.}
\email{andrew.linshaw@du.edu}


{\abstract \noindent  Coset constructions of $\cW$-algebras have many applications, and were recently given for principal $\cW$-algebras of $A$, $D$, and $E$ types by Arakawa together with the first and third authors. In this paper, we give coset constructions of the large and small $N=4$ superconformal algebras, which are the minimal $\cW$-algebras of $\gd(2,1;a)$ and $\gp\gs\gl(2|2)$, respectively. From these realizations, one finds a remarkable connection between the large $N=4$ algebra and the diagonal coset $C^{k_1, k_2} = \text{Com}(V^{k_1+k_2}(\gs\gl_2), V^{k_1}(\gs\gl_2) \otimes V^{k_2}(\gs\gl_2))$, namely, as two-parameter vertex algebras, $C^{k_1, k_2}$ coincides with the coset of the large $N=4$ algebra by its affine subalgebra. We also show that at special points in the parameter space, the simple quotients of these cosets are isomorphic to various $\cW$-algebras. As a corollary, we give new examples of strongly rational principal $\cW$-algebras of type $C$ at degenerate admissible levels.}

\keywords{vertex algebra; $\cW$-algebra; $N=4$ superconformal algebra; coset construction}
\maketitle

\section{Introduction} \label{section:intro}

The $N=4$ superconformal algebra $V^{k, a}$ is to us the most exciting family of vertex superalgebras. It is a two-parameter vertex algebra depending on the complex numbers $k$ and $a$, and in the large $a$ limit it becomes the small $N=4$ superconformal algebra $V^k$ times a commutative vertex algebra. Both the small and large $N=4$ superconformal algebra are central in the physics of string theory, especially in the context of the AdS$_3$/CFT$_2$-correspondence \cite{Mal}. 

The $N=4$ superconformal algebra is special in many ways. It is the only example of a minimal $\cW$-superalgebra that is a two-parameter vertex algebra. The simple quotient of $V^{1/2, a}$ appears as a quantum geometric Langlands kernel vertex algebra in the context of $S$-duality for $SU(2)$ \cite{CGL}. Furthermore, the small $N=4$ superconformal algebra has a large group of outer automorphisms, namely $\text{SL}_2$. The simple quotient of $V^{-2}$ is the algebra of global sections of $K3$ surfaces and its $\mathbb{Z}_2$-orbifold is the one of Enriques surfaces \cite{Song}, see also \cite{CLR}.  The simple quotient of $V^{-2}$ is also the vertex superalgebra of Mathieu moonshine \cite{EOT}.
The Lie superalgebras $\gd(2, 1, a)$ for complex $a$ form a continuous family of $9+8$ dimensional Lie superalgebras and $V^{k, a}$ is the minimal $\cW$-algebra of $\gd(2, 1, a)$ at level $k$. Obviously it is interesting to gain a good understanding of vertex superalgebras associated to $\gd(2, 1, a)$.

Given a vertex algebra $V$ and a subalgebra $W\subseteq V$, the commutant $C=\text{Com}(W, V)$ is called a coset vertex algebra. 
Realizing $\cW$-algebras as coset vertex algebras is not only interesting in its own right but also has various applications. Recently and jointly with Tomoyuki Arakawa, two of us have proven that principal $\cW$-algebras of simply laced Lie algebras can be realized as coset vertex algebras \cite{ACL}. This result had various important consequences such as strong rationality of the cosets $\text{Com}(L_n(\gg\gl_m), L_n(\gs\gl_{m+1}))$ and $\text{Com}(L_{2n}(\gs\go_m), L_{2n}(\gs\go_{m+1}))$ for positive integers $n$ and $m$ \cite[Cor. 1.2 and 1.3]{ACL}. Here $L_k(\gg)$ denotes the simple quotient of the universal affine vertex algebra $V^k(\gg)$. Another consequence was rigidity of the vertex tensor categories of ordinary modules of affine vertex algebras of simply-laced Lie algebras at admissible level, and equivalences of these tensor categories with subcategories of modules of principal $\cW$-algebras \cite{C}. This illustrates the importance of coset realizations, namely in this case $V$ and $W$ are affine vertex algebras and only the representation theory of the coset $C=\text{Com}(W, V)$ as a strongly rational principal $\cW$-algebra was well understood \cite{A2, A3, A4}. Via the coset relation, the category of ordinary modules of the affine vertex algebra inherited important structure such as rigidity. Two more examples that illustrate the power of coset realizations for the representation theory of vertex superalgebras are the affine vertex superalgebra of $\go\gs\gp(1|2)$ at admissible level \cite{CFK, CKLR} and the $N=2$ superconformal algebra \cite{CLRW, Sato, Semikhatov, Adamovic}. The coset $\text{Com}(L_{k}(\gs\gl_2), L_{k}(\go\gs\gp(1|2)))$ at a positive integer or admissible level is a strongly rational Virasoro algebra, so $L_{k}(\go\gs\gp(1|2))$ can be realized as an extension of $L_{k}(\gs\gl_2)$ times the Virasoro algebra. This implies the strong rationality of  $L_{k}(\go\gs\gp(1|2))$ at positive integer or admissible levels, and the representation theory of $L_{k}(\go\gs\gp(1|2))$ is then inherited using the theory of \cite{CKM}. In the case of the $N=2$ superconformal algebra this goes the other way around, as this superalgebra is a coset of 
$L_{k}(\gs\gl_2)$ times a rank one $bc$ vertex superalgebra. Nonetheless, the representation theory of the $N=2$ superconformal algebra can be nicely related to the one of $L_{k}(\gs\gl_2)$ \cite{CLRW, Sato, Semikhatov, Adamovic}. 

In this work, we not only realize $V^{k,a}$ and $V^k$ as cosets but we also show that their cosets by their affine vertex subalgebras are diagonal cosets of $\gs\gl_2$-type in the case of $V^{k, a}$ and in the case of $V^k$ one has to take in addition an $\text{SL}_2$-orbifold. 
We are optimistic that our coset realizations of $V^{k, a}$ and $V^k$ will allow us to study the representation theory of their simple quotients  at special values of $k$ and $a$. Let us  note that the idea that vertex algebas associated to $\gd(2, 1, a)$ might be related to $\gs\gl_2$-cosets appeared a while ago in work of Semikhatov and one of us \cite{FS}, see also \cite{BFST}.

\subsection{Results}

By a two-parameter vertex algebra we mean a vertex algebra over a localization of the polynomial ring in two variables, and we are concerned with four such two-parameter families. First, the large $N=4$ superconformal algebra $V^{k, a}$ that is defined as the minimal $\cW$-algebra of $\gd(2, 1; a)$ at level $k$. Our point of view is that $V^{k, a}$ is a two-parameter vertex algebra over a localization of $\mathbb C[k, a]$, and $V^{k, a}$ contains as subalgebra the affine vertex algebra $V^\ell(\gs\gl_2) \otimes V^{\ell'}(\gs\gl_2)$ at levels $\ell = - \frac{a+1}{a}k-1$ and $\ell' = -(a+1)k-1$, respectively.  Its coset is denoted by $D^{k,a} := \text{Com}(V^{\ell}(\gs\gl_2) \otimes V^{\ell'}(\gs\gl_2), V^{k,a})$. Our third two-parameter vertex algebra is the diagonal coset $$C^{k_1, k_2}(\gs\gl_2) := \text{Com}\left(V^{k_1+k_2}(\gs\gl_2), V^{k_1}(\gs\gl_2) \otimes V^{k_2}(\gs\gl_2)\right).$$ The last two-parameter vertex algebra that we consider is denoted by $Y(\lambda, \mu)$. It is constructed in section \ref{sec:N4coset}, and its even subalgebra is the coset $$Y(\lambda, \mu)_0 = \text{Com}\left(L_1(\gs\gl_2) \otimes L_1(\gs\gl_2),L_1(\gd(2, 1;-\lambda) \otimes L_1(\gd(2, 1;-\mu)\right).$$
As usual, by $L_k(\gg)$ we mean the simple quotient of $V^k(\gg)$. 
These two-parameter vertex algebras enjoy a remarkable connection, see Theorem \ref{thm:N4}  and Corollary \ref{cor:cosetiso}:
\begin{main}
As two-parameter vertex algebras
\[
V^{-\frac{\lambda\mu}{\lambda+\mu}, \frac{\mu}{\lambda}} \cong  \text{Com}\left(V^{\lambda^{-1}+\mu^{-1}-2}(\gs\gl_2), Y(\lambda, \mu)\right) \qquad \text{and} \qquad 
D^{-\frac{\lambda\mu}{\lambda+\mu}, \frac{\mu}{\lambda}} \cong C^{\lambda^{-1}-1, \mu^{-1}-1}.
\]
\end{main}
A similar result holds for the small $N=4$ superconformal algebra as a one-parameter vertex algebra and these can be viewed as the $\mu\rightarrow \infty$ limit of the above results. For this, recall that in the large $a$ limit $\gd(2, 1; a)$ becomes $\gp\gs\gl(2|2) \oplus \mathbb C^3$ and the small $N=4$ superconformal algebra $V^k$ is defined as the minimal $\cW$-algebra of $\gp\gs\gl(2|2)$  at level $k$. It contains $V^{-k-1}(\gs\gl_2)$ as affine subalgebra, and we set $D^k := \text{Com}(V^{-k-1}(\gs\gl_2), V^k)$. Furthermore, we construct a one-parameter vertex algebra $Y(\lambda)$ whose even subalgebra is $Y(\lambda)_0 = \text{Com}\left(L_1(\gs\gl_2) \otimes L_1(\gs\gl_2),L_1(\gd(2, 1;-\lambda) \otimes L_1(\gp\gs\gl(2|2))\right).$ We prove in Theorem \ref{thm:smallN4} and Corollary \ref{cor:smallN4}
\begin{main}
As one-parameter vertex algebras 
\[
V^{-\lambda} \cong  \text{Com}\left(V^{\lambda^{-1} -2}(\gs\gl_2), Y(\lambda)\right) \qquad \text{and} \qquad
(D^{-\lambda})^{\text{SL}_2} \cong C^{\lambda^{-1}-1, -1}.
\]
\end{main}
The second part of the paper is then an investigation of $C^{k_1, k_2}$ and its simple quotients. First, in section \ref{sec:generic} we determine minimal strong generators for $C^{k_1, k_2}$.
\begin{main}
$C^{k_1, k_2}$ is of type $\cW(2,4,6,6,8,8,9,10,10,12)$ as a vertex algebra over a localization of the ring $\mathbb{C}[k_1, k_2]$. Equivalently, this holds for generic values of $k_1, k_2$.
\end{main}
This has been previously stated in the physics literature \cite{B-H,KS}, but to the best of our knowledge a rigorous proof has not previously appeared. 

Next, we study certain simple one-parameter quotients of $C^{k_1, k_2}$ by identifying them with quotients of the universal even spin $\cW_{\infty}$-algebra that was conjectured to exist in the physics literature \cite{H,CGKV}, and constructed in \cite{KL}. Recall that there is a unique two-parameter vertex algebra $\cW^{\text{ev}}(c,\lambda)$ that is freely generated of type $\cW(2,4,6,\dots)$, and is generated by a Virasoro field $L$ and a weight $4$ primary field $W^4$. All one-parameter vertex algebras of type $\cW(2,4,6,\dots, 2N)$ for some $N$ satisfying some mild hypotheses, arise as quotients of this universal algebra. This includes the principal $\cW$-algebras of type $C$ (and type $B$, by Feigin-Frenkel duality), and the $\mathbb{Z}_2$-orbifold of the principal $\cW$-algebras of type $D$, as well as many others arising as cosets of affine vertex algebras inside larger structures. One-parameter quotients of $\cW^{\text{ev}}(c,\lambda)$ are in bijection with a certain family of curves in the parameter space $\mathbb{C}^2$ called {\it truncation curves}, and nontrivial pointwise coincidences between these vertex algebras correspond to intersection points on these curves. Based on computations of Hornfeck \cite{H}, the explicit truncation curves for the $\cW$-algebras $\cW^k(\gs\gp_{2n}, f_{\text{prin}})$, and the orbifolds $\cW^k(\gs\go_{2n}, f_{\text{prin}})^{\mathbb{Z}_2}$ were written down in \cite{KL}, and the pointwise coincidences between the simple quotients of these algebras were classified; see Theorems 9.1, 9.3, and 9.4 of \cite{KL}.

Similarly, there are certain prime ideals $J = (p(k_1, k_2)) \subseteq \mathbb{C}[k_1, k_2]$ for which the quotient $C^{J, k_1, k_2} =  C^{k_1, k_2} / J \cdot C^{k_1, k_2}$ is not simple as a vertex algebra over the ring $\mathbb{C}[k_1, k_2] / J$. We shall denote the simple graded quotient $C^{J,k_1, k_2}$ by its maximal proper graded ideal by $C^{k_1, k_2}_J$.

\begin{main}
For $J = (k_2 -2)$, $J= (k_2 + 1/2)$, and $J = (k_1 - k_2)$, the simple, one-parameter vertex algebras $C^{k_1, k_2}_J$ can be identified with quotients of $\cW^{\text{ev}}(c,\lambda)$. 
\end{main}
In all these cases, we shall identify the explicit truncation curve that allows this identification. By finding the intersection points between these truncation curves and the truncation curves for the above $\cW$-algebras, we classify the points where the simple quotients $C_{k_1, 2}$, $C_{k_1, -1/2}$, and $C_{k_1, k_1}$, are isomorphic to $\cW_{\ell}(\gs\gp_{2n}, f_{\text{prin}})$ or $\cW_{\ell}(\gs\go_{2n}, f_{\text{prin}})^{\mathbb{Z}_2}$ for some $\ell$ and $n$. 

In the case $J = (k_2+1)$,  $C^{J, k_1, k_2}$ is already simple as a one-parameter vertex algebra, and we identify it with the $\mathbb{Z}_2$-orbifold of a quotient of the universal $\cW_{\infty}$-algebra $\cW(c,\lambda)$ which was constructed in \cite{L6}. Recall that  $\cW(c,\lambda)$ is freely generated of type $\cW(2,3,\dots)$, and is generated by a Virasoro field $L$ and a weight $3$ primary field $W^3$. Also, it has full automorphism group $\mathbb{Z}_2$. All one-parameter vertex algebras of type $\cW(2,3,\dots, N)$ for some $N$ satisfying some mild hypotheses, arise as quotients of this universal algebra. This includes the type $A$ principal $\cW$-algebras and many families of cosets such as the generalized parafermion algebras $\cG^{\ell}(n) = \text{Com}(V^k(\gg\gl_n), V^k(\gs\gl_{n+1}))$. Such one-parameter quotients are in bijection with a family of plane curves, and coincidences between these algebras correspond to intersection points on their curves. As a consequence, we classify all coincidences between $C_{k_1, -1}$ and the simple orbifolds $\cW_{\ell}(\gs\gl_n, f_{\text{prin}})^{\mathbb{Z}_2}$ and $\cG_{\ell}(n)^{\mathbb{Z}_2}$.

The strong rationality of principal $\cW$-algebras at nondegenerate admissible levels is a fundamental result of Arakawa \cite{A1,A2}, and it is of interest to find new examples of strongly rational principal $\cW$-algebras. Some infinite families of examples of types $B$, $C$, and $D$ principal $\cW$-algebras at nonadmissible levels that are strongly rational were found in \cite{KL}, but a complete classification of such algebras is currently out of reach. In this paper, we give new examples in type $C$ (and therefore type $B$) by combining our classification of isomorphisms $C_{k_1, 2} \cong \cW_{\ell}(\gs\gp_{2n}, f_{\text{prin}})$ with the independent observation that $C_{k_1, 2}$ is strongly rational for all positive integer or admissible levels $k_1$.

\begin{main}
At the degenerate admissible level $\displaystyle \ell = -(n+1) + \frac{2n-1}{4 (n-1)}$, $\cW_{\ell}(\gs\gp_{2n}, f_{\text{prin}})$ is strongly rational. These are new examples of strongly rational principal $\cW$-algebras.
\end{main}

\noindent {\bf Acknowledgements} T.C is supported by NSERC Discovery Grant \#RES0020460. The research of B. F. was carried out within the
HSE University Basic Research Program and funded (jointly) by Russian Academic Excellence Project \lq\lq 5-100" and by the Russian Science Foundation (project 16-11-10316). A. L. is supported by Simons Foundation Collaboration Grant \#318755.

\section{Background} \label{section:background}

We begin by defining vertex algebras, which have been discussed from various different points of view in the literature (see \cite{Bor,FLM,FHL,K,FBZ}, for example). We follow the formalism developed in \cite{LZ} and partly in \cite{Li}. Let $V=V_0\oplus V_1$ be a super vector space over $\mathbb{C}$, $z,w$ be formal variables, and $\text{QO}(V)$ be the space of linear maps $$V\ra V((z))=\{\sum_{n\in\mathbb{Z}} v(n) z^{-n-1}|
v(n)\in V,\ v(n)=0\ \text{for} \ n>\!\!>0 \}.$$ Each element $a\in \text{QO}(V)$ can be represented as a power series
$$a=a(z)=\sum_{n\in\mathbb{Z}}a(n)z^{-n-1}\in \text{End}(V)[[z,z^{-1}]].$$ We assume that $a=a_0+a_1$ where $a_i:V_j\ra V_{i+j}((z))$ for $i,j\in\mathbb{Z}_2$, and we write $|a_i| = i$.

For each $n \in \mathbb{Z}$, we have a nonassociative bilinear operation on $\text{QO}(V)$, defined on homogeneous elements $a$ and $b$ by
$$ a(w)_{(n)}b(w)=\text{Res}_z a(z)b(w)\ \iota_{|z|>|w|}(z-w)^n- (-1)^{|a||b|}\text{Res}_z b(w)a(z)\ \iota_{|w|>|z|}(z-w)^n.$$
Here $\iota_{|z|>|w|}f(z,w)\in\mathbb{C}[[z,z^{-1},w,w^{-1}]]$ denotes the power series expansion of a rational function $f$ in the region $|z|>|w|$. For $a,b\in \text{QO}(V)$, we have the following identity of power series known as the {\it operator product expansion} (OPE) formula.
 \begin{equation}\label{opeform} a(z)b(w)=\sum_{n\geq 0}a(w)_{(n)} b(w)\ (z-w)^{-n-1}+:a(z)b(w):. \end{equation}
Here $:a(z)b(w):\ =a(z)_-b(w)\ +\ (-1)^{|a||b|} b(w)a(z)_+$, where $a(z)_-=\sum_{n<0}a(n)z^{-n-1}$ and $a(z)_+=\sum_{n\geq 0}a(n)z^{-n-1}$. Often, \eqref{opeform} is written as
$$a(z)b(w)\sim\sum_{n\geq 0}a(w)_{(n)} b(w)\ (z-w)^{-n-1},$$ where $\sim$ means equal modulo the term $:a(z)b(w):$, which is regular at $z=w$. 

Note that $:a(w)b(w):$ is a well-defined element of $\text{QO}(V)$. It is called the {\it Wick product} or {\it normally ordered product} of $a$ and $b$, and it
coincides with $a_{(-1)}b$. For $n\geq 1$ we have
$$ n!\ a(z)_{(-n-1)} b(z)=\ :(\partial^n a(z))b(z):,\qquad \partial = \frac{d}{dz}.$$
For $a_1(z),\dots ,a_k(z)\in \text{QO}(V)$, the $k$-fold iterated Wick product is defined inductively by
\begin{equation}\label{iteratedwick} :a_1(z)a_2(z)\cdots a_k(z):\ =\ :a_1(z)b(z):,\qquad b(z)=\ :a_2(z)\cdots a_k(z):.\end{equation}
We often omit the formal variable $z$ when no confusion can arise.

A subspace $\cA\subseteq \text{QO}(V)$ containing $1$ which is closed under all the above products will be called a {\it quantum operator algebra} (QOA). We say that $a,b\in \text{QO}(V)$ are {\it local} if $$(z-w)^N [a(z),b(w)]=0$$ for some $N\geq 0$. Here $[,]$ denotes the super bracket. This condition implies that $a_{(n)}b = 0$ for $n\geq N$, so (\ref{opeform}) becomes a finite sum. Finally, a {\it vertex algebra} will be a QOA whose elements are pairwise local. This notion is well known to be equivalent to the notion of a vertex algebra in the sense of \cite{FLM}. 

A vertex algebra $\cA$ is said to be {\it generated} by a subset $S=\{\alpha^i|\ i\in I\}$ if $\cA$ is spanned by words in the letters $\alpha^i$, and all products, for $i\in I$ and $n\in\mathbb{Z}$. We say that $S$ {\it strongly generates} $\cA$ if $\cA$ is spanned by words in the letters $\alpha^i$, and all products for $n<0$. Equivalently, $\cA$ is spanned by $$\{ :\partial^{k_1} \alpha^{i_1}\cdots \partial^{k_m} \alpha^{i_m}:| \ i_1,\dots,i_m \in I,\ k_1,\dots,k_m \geq 0\}.$$ 

Suppose that $S$ is an ordered strong generating set $\{\alpha^1, \alpha^2,\dots\}$ for $\cA$ which is at most countable. We say that $S$ {\it freely generates} $\cA$, if $\cA$ has a PBW basis consisting of all normally ordered monomials 
\begin{equation} \label{freegen} \begin{split} & :\partial^{k^1_1} \alpha^{i_1} \cdots \partial^{k^1_{r_1}}\alpha^{i_1} \partial^{k^2_1} \alpha^{i_2} \cdots \partial^{k^2_{r_2}}\alpha^{i_2}
 \cdots \partial^{k^n_1} \alpha^{i_n} \cdots \partial^{k^n_{r_n}} \alpha^{i_n}:,\qquad 
 1\leq i_1 < \dots < i_n,
 \\ & k^1_1\geq k^1_2\geq \cdots \geq k^1_{r_1},\quad k^2_1\geq k^2_2\geq \cdots \geq k^2_{r_2},  \ \ \cdots,\ \  k^n_1\geq k^n_2\geq \cdots \geq k^n_{r_n},
 \\ &  k^{t}_1 > k^t_2 > \dots > k^t_{r_t} \ \ \text{if} \ \ \alpha^{i_t}\ \ \text{is odd}. 
 \end{split} \end{equation}

A conformal structure with central charge $c$ on $\cA$ is a Virasoro vector $L(z) = \sum_{n\in \mathbb{Z}} L_n z^{-n-2} \in \cA$ satisfying
\begin{equation} \label{virope} L(z) L(w) \sim \frac{c}{2}(z-w)^{-4} + 2 L(w)(z-w)^{-2} + \partial L(w)(z-w)^{-1},\end{equation} such that in addition, $L_{-1} \alpha = \partial \alpha$ for all $\alpha \in \cA$, and $L_0$ acts diagonalizably on $\cA$. We say that $\alpha$ has conformal weight $d$ if $L_0(\alpha) = d \alpha$, and we denote the conformal weight $d$ subspace by $\cA[d]$. As a matter of notation, we say that a vertex algebra $\cA$ is of type $$\cW(d_1,d_2,\dots)$$ if it has a minimal strong generating set consisting of one even field in each conformal weight $d_1, d_2, \dots $.

\subsection{Vertex algebras over commutative rings}
Given a finitely generated, unital commutative $\mathbb{C}$-algebra $R$, a vertex algebra over $R$ is an $R$-module $\cA$ with a vertex algebra structure defined as above. A  comprehensive theory of vertex algebras over commutative rings was recently developed by Mason \cite{Ma}, but the main difficulties are not present when $R$ is a $\mathbb{C}$-algebra. First, given an $R$-module $M$, let $\text{QO}_R(M)$ be the set of $R$-module homomorphisms $a: M \ra M((z))$, which can be represented power series $$a(z) = \sum_{n\in \mathbb{Z}} a(n) z^{-n-1} \in \text{End}_R(M)[[z,z^{-1}]].$$ Here $a(n) \in \text{End}_R(M)$ is an $R$-module endomorphism, and for each $v\in M$, $a(n) v = 0$ for $n>\!\!>0$. Then $\text{QO}_R(M)$ is again an $R$-module, and we define the bilinear operations $a_{(n)} b$ as before, which are $R$-module homomorphisms $\text{QO}_R(M) \otimes_R \text{QO}_R(M) \ra \text{QO}_R(M)$. A QOA will be an $R$-module $\cA \subseteq \text{QO}_R(M)$ containing $1$ and closed under the above operations. Locality is defined as before, and a vertex algebra over $R$ is a QOA $\cA\subseteq \text{QO}_R(M)$ whose elements are pairwise local. The notions of generation, strong generation, and free generation are unchanged, as is the notion of conformal structure. If $L  \in \cA$ is a field satisfying \eqref{virope}, such that $L_0$ acts on $\cA$ by $\partial$ and $L_1$ acts diagonalizably, we assume we have an $R$-module decomposition $\cA = \bigoplus_{d\in R} \cA[d]$, where $\cA[d]$ is the $L_0$-eigenspace with eigenvalue $d$. In all our examples, the grading will be by either $\mathbb{Z}_{\geq 0}$ or $\frac{1}{2} \mathbb{Z}_{\geq 0}$. 

Next, we recall the notion of simplicity that was introduced in \cite{L6}. Let $\cA$ be a vertex algebra over $R$ with weight grading 
\begin{equation} \label{eq:gradedrvoa} \cA = \bigoplus_{n\geq 0} \cA[n],\qquad \cA[0] \cong R.\end{equation}
A vertex algebra ideal $\cI \subseteq \cA$ is called {\it graded} if $$\cI = \bigoplus_{n\geq 0} \cI[n],\qquad \cI[n] = \cI \cap \cA[n].$$ 
We say that $\cA$ is {\it simple} if for every proper graded ideal $\cI \subseteq \cA$, $\mathcal{I}[0] \neq \{0\}$. This is different from the usual notion of simplicity, namely, that $\cA$ has no nontrivial proper graded ideals, but it coincides with the usual notion if $R$ is a field. If $I \subseteq R$ is a nontrivial proper ideal, it will generate a nontrivial proper graded vertex algebra ideal $I\cdot \cA$. Then $$\cA^I = \cA / (I\cdot \cA)$$ is a vertex algebra over the ring $R/I$. Even if $\cA$ is simple over $R$, $\cA^I$ need not be simple over $R/I$.

\subsection{Affine vertex algebras} Let $\gg$ be a simple, finite-dimensional Lie algebra with dual Coxeter number $h^{\vee}$. The universal affine vertex algebra $V^k(\gg)$ is freely generated by fields $X^{\xi}$ which are linear in $\xi \in \gg$ and satisfy OPEs
\begin{equation}
X^{\xi}(z) X^{\eta}(w) \sim k ( \xi, \eta) (z-w)^{-2} + X^{[\xi,\eta]}(w)(z-w)^{-1},
\end{equation}
where $(\cdot ,\cdot )$ denotes the normalized Killing form $\frac{1}{2h^{\vee}} \langle \cdot,\cdot \rangle$. For all $k\neq -h^{\vee}$, $V^k(\gg)$ has a conformal vector
\begin{equation} \label{sugawara} L^{\gg}  = \frac{1}{2(k+h^{\vee})} \sum_{i=1}^n :X^{\xi_i} X^{\xi'_i}: \end{equation} of central charge $\displaystyle c = \frac{k\ \text{dim}(\gg)}{k+h^{\vee}}$. Here $\{\xi_i\}$ is a basis of $\gg$, and $\{\xi'_i\}$ is the dual basis with respect to $(\cdot,\cdot)$. As a module over the affine Lie algebra $\hat{\gg} = \gg[t,t^{-1}] \oplus \mathbb{C}$, $V^k(\gg)$ is isomorphic to the vacuum $\hat{\gg}$-module. We denote by $L_k(\gg)$ the (simple) quotient of $V^k(\gg)$ by its maximal proper ideal graded by conformal weight. It is often useful to regard $k$ as a formal variable and regard $V^k(\gg)$ as a vertex algebra over the ring $\mathbb{C}[k]$ localized along the multiplicative set generated by $(k+h^{\vee})$. Then for all $k_0 \neq -h^{\vee}$, the vertex algebra $V^{k_0}(\gg)$ coincides with the specialization of the one-parameter vertex algebra $V^k(\gg)$ at the value $k = k_0$. Alternatively, it is the quotient of $V^k(\gg)$ by the ideal generated by $(k - k_0)$.

\subsection{Coset construction} 
Given a vertex algebra $\cV$ and a subalgebra $\cA \subseteq \cV$, the {\it coset} or {\it commutant} of $\cA$ in $\cV$, denoted by $\text{Com}(\cA,\cV)$, is the subalgebra of elements $v\in\cV$ such that $$[a(z),v(w)] = 0,\qquad\forall a\in\cA.$$ This was introduced by Frenkel and Zhu in \cite{FZ}, generalizing earlier constructions in \cite{GKO,KP}. Equivalently, $v\in \text{Com}(\cA,\cV)$ if and only if $a_{(n)} v = 0$ for all $a\in\cA$ and $n\geq 0$. 
Note that if $\cV$ and $\cA$ have Virasoro elements $L^{\cV}$ and $L^{\cA}$, then $\text{Com}(\cA,\cV)$ has Virasoro element $L = L^{\cV} - L^{\cA}$ as long as $ L^{\cV} \neq L^{\cA}$, and $\cA \otimes \text{Com}(\cA,\cV) \hookrightarrow \cV$ is a conformally embedding.

For all our examples in this paper, $\cA$ will be a homomorphic image of $V^k(\gg)$, and $\cV$ will be some larger vertex algebra whose structure constants depend continuously on the parameter $k$, which we denote by $\cV^k$. If we regard $k$ as a formal variable, this means that for $k_0 \in \mathbb{C}$, $\cV^{k_0}$ is the specialization of the one-parameter vertex algebra $\cV^k$ at $k = k_0$, as above.

In this case, $\cC^k = \text{Com}(\cA,\cV)$ is just the invariant space $(\cV^k)^{\gg[t]}$ under the Lie algebra $\gg[t]$. Cosets of this form were studied in a general setting in \cite{CL} under a few additional hypotheses. First, $\cV^k$ admits a well-defined limit $\cV^{\infty} = \lim_{k\ra \infty} \cV^k$. For example, $V^k(\gg)$ admits such a limit, and $\lim_{k\ra \infty} V^k(\gg)$ is isomorphic to the rank $n$ Heisenberg vertex algebra $\cH(n)$, where $n = \text{dim}(\gg)$. Similarly, any minimal $\cW$-algebra admits such a limit; see Section 4 of \cite{ACKL}. Additionally, we assume

\begin{enumerate}
\item The action of $\gg$ on $\cV^k$ lifts to an action of a connected Lie group $G$ with Lie algebra $\gg$.
\item $\cV^{\infty} = \lim_{k\ra \infty} \cV^k \cong \cH(n) \otimes \tilde{\cV}$ for some vertex algebra $\tilde{\cV}$, where $n = \text{dim}(\gg)$. 
\end{enumerate}

Under these hypotheses, $\cC^k$ has a well-defined limit $\cC^{\infty} = \lim_{k\ra \infty} \cC^k$. Moreover, $G$ acts on $\tilde{\cV}$, and by Theorem 6.10 of \cite{CL}, we have an isomorphism $$\cC^{\infty} = \lim_{k\ra \infty} \cC^k \cong  \tilde{\cV}^G.$$ Many features of $\cC^{\infty}$, such as the weights of a strong generating set, graded character, etc., will also hold for $\cC^k$ for generic values of $k$. This reduces many structural questions about $\cC^k$ to questions about the $G$-invariant vertex algebra $\tilde{\cV}^G$. In earlier works \cite{L1,L2,L3,L4,L5}, the strong finite generation of a large class of vertex algebras of the form $\tilde{\cV}^G$ was established, where $G$ is an arbitrary reductive group. This implies the strong finite generation of $\cC^k$ for generic $k$.

There is a word of caution that we need to mention. We may regard $\cC^k$ as a one-parameter vertex algebra with formal variable $k$. However, the specialization of $\cC^k$ at a value $k = k_0$ can be a proper subalgebra of the honest coset $\text{Com}(V^{k_0}(\gg), \cV^{k_0})$. By Corollary 6.7 of \cite{CL},  if $\gg$ is simple this can only occur when $k_0 + h^{\vee} \in \mathbb{Q}_{\leq 0}$; for all other values of $k_0$, these two vertex algebras agree. In this paper, we shall often use the notation $\cC^k$ without specifying whether $k$ is to be regarded as a formal variable or a complex number.

\subsection{Diagonal coset}
Given a simple Lie algebra $\gg$, one of our main objects of study is the {\it diagonal coset} $C^{k_1, k_2}(\gg)$. Given complex numbers $k_1, k_2$, we have the diagonal embedding
$$V^{k_1+k_2}(\gg) \hookrightarrow V^{k_1}(\gg) \otimes V^{k_2}(\gg),$$ and we define 
\begin{equation} \label{eq:defofck1k2} C^{k_1, k_2}(\gg) := \text{Com}\left(V^{k_1+k_2}(\gg), V^{k_1}(\gg) \otimes V^{k_2}(\gg)\right).\end{equation}
As above, we often regard $k_1, k_2$ as formal variables and regard $C^{k_1, k_2}(\gg)$ as a vertex algebra over the ring $\mathbb{C}[k_1, k_2]$ localized along $(k_1 + h^{\vee})$, $(k_2 + h^{\vee})$ and $(k_1+ k_2 + h^{\vee})$. For notational simplicity, we shall not use a different notation when $k_1, k_2$ are regarded as formal variables, and it should be clear from context when this is the case. Since $V^{k_1+k_2}(\gg)$ is generically simple it follows from \cite[Lem. 2.1]{ACKL} that $C^{k_1, k_2}(\gg)$ is a simple vertex algebra for generic values of $k_1, k_2$; equivalently,  $C^{k_1, k_2}(\gg)$ is simple as a vertex algebra over the above ring. As above, for $\ell_1, \ell_2 \in \mathbb{C}$, as long as $\ell_1 + \ell_2 + h^{\vee} \notin \mathbb{Q}_{\leq 0}$, the specialization of the two-parameter vertex algebra $C^{k_1, k_2}(\gg)$ to the values $k_1 = \ell_1$ and $k_2 = \ell_2$, will coincide with the honest coset $\text{Com}\left(V^{\ell_1+\ell_2}(\gg), V^{\ell_1}(\gg) \otimes V^{\ell_2}(\gg)\right)$.

\subsection{Triality and coset as relative semi-infinite cohomology}
Let $\gg$ be a simple Lie algebra with dual Coxeter number $h^\vee$ and consider $V^{k_1}(\gg) \otimes V^{k_2}(\gg) \otimes V^{k_3}(\gg)$ such that 
$k_1 + k_2 + k_3 = -2h^\vee$. Then the following triality of the diagonal coset is well-known to experts. 
\begin{thm}\label{thm:triality}
For generic values of $k_1$ and $k_2$,  
$C^{k_2, k_3}(\gg) \cong C^{k_1, k_3}(\gg) \cong C^{k_2, k_1}(\gg)$.
\end{thm}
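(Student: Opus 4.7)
The plan is to use semi-infinite (BRST) cohomology for the diagonal $\gg$-action on the triple tensor product $V^{k_1}(\gg) \otimes V^{k_2}(\gg) \otimes V^{k_3}(\gg)$. Symmetry of the tensor product immediately gives $C^{k_1, k_2}(\gg) \cong C^{k_2, k_1}(\gg)$, so the essential content of the theorem is that the three pairwise cosets $C^{k_i, k_j}(\gg)$, for $\{i, j\} \subset \{1, 2, 3\}$, are all mutually isomorphic. The strategy is to realize each of them as the cohomology of one and the same vertex superalgebra that treats the three bosonic factors symmetrically, thereby producing canonical isomorphisms.

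Concretely, I would let $\Lambda_{\gg}$ denote the $bc$-ghost vertex superalgebra associated to $\gg$ (whose bosonic ghost current contributes level $-2 h^{\vee}$), and form
\[
\mathcal{X} := V^{k_1}(\gg) \otimes V^{k_2}(\gg) \otimes V^{k_3}(\gg) \otimes \Lambda_{\gg},
\]
equipped with the standard Feigin BRST differential $d$ for the diagonal $\gg$-action. The hypothesis $k_1 + k_2 + k_3 = -2 h^{\vee}$ is precisely what is needed for $d^2 = 0$, and it simultaneously allows one to combine the three Sugawara vectors with the ghost Virasoro vector into a conformal structure on $\mathcal{X}$ for which $d$ has weight zero. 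Passing to $\hat{\gg}$-relative semi-infinite cohomology at ghost number zero produces a vertex algebra $H := H^{\infty/2}_{\mathrm{rel}}(\hat{\gg}, \gg;\, \mathcal{X})$.

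The main step is to compute $H$ in three different ways by grouping the bosonic factors. Fix $\{i, j\} \subset \{1, 2, 3\}$ with complement $\{k\}$. For generic values of the parameters one has a Peter--Weyl-type branching
\[
V^{k_i}(\gg) \otimes V^{k_j}(\gg) \;\cong\; \bigoplus_{\lambda} \mathbf{V}^{k_i + k_j}_{\lambda} \otimes \mathbf{M}_{\lambda}
\]
as a module for $\hat{\gg}_{\mathrm{diag}} \otimes C^{k_i, k_j}(\gg)$, where the $\mathbf{V}^{k_i + k_j}_{\lambda}$ are Weyl modules, the $\mathbf{M}_{\lambda}$ are the multiplicity $C^{k_i, k_j}(\gg)$-modules, and $\mathbf{M}_0 = C^{k_i, k_j}(\gg)$ itself. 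Since $k_i + k_j + k_k = -2 h^{\vee}$, tensoring with $V^{k_k}(\gg)$ places each summand at critical total level, and the standard vanishing theorem for relative semi-infinite cohomology should give
\[
H^{\infty/2}_{\mathrm{rel}}\!\left(\hat{\gg}, \gg;\, \mathbf{V}^{k_i+k_j}_{\lambda} \otimes V^{k_k}(\gg)\right) \;\cong\; \begin{cases} \mathbb{C} & \lambda = 0, \\ 0 & \lambda \neq 0, \end{cases}
\]
concentrated in ghost number zero. Summing over $\lambda$ then yields $H \cong \mathbf{M}_0 = C^{k_i, k_j}(\gg)$. Running the same argument for each of the three pairings identifies all three cosets with the single object $H$, which is the theorem.

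The main obstacle is the vanishing on nontrivial branching summands. Morally, $V^{k_k}(\gg)$ at level $k_k = -2 h^{\vee} - (k_i + k_j)$ plays the role of the restricted dual of the vacuum Weyl module at level $k_i + k_j$, and at critical total level a nondegenerate invariant pairing contributes to relative semi-infinite cohomology only through dual pairs, which forces $\lambda = 0$; the critical-level pathologies that could block this at rational or admissible levels are precisely what is excluded by the genericity hypothesis on $k_1, k_2$. As a cross-check, one could also compare the structure of the three cosets using the $k \to \infty$ framework recalled in Section~\ref{section:background}, exhibiting each of them as the invariants $\tilde{\cV}^G$ of a common limit vertex algebra and using generic-level rigidity to propagate the isomorphism back; this would provide a backup argument if the BRST vanishing turned out to be more delicate than anticipated.
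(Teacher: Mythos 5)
Your proposal is correct and follows essentially the same route as the paper: the paper also realizes all three cosets as the degree-zero relative semi-infinite cohomology $H^{\mathrm{rel},0}_{\infty}(\gg,\, V^{k_1}(\gg)\otimes V^{k_2}(\gg)\otimes V^{k_3}(\gg))$ at critical total level $k_1+k_2+k_3=-2h^{\vee}$, grouping the factors pairwise and using the branching over $V^{k_i+k_j}(\lambda)\otimes \mathbf{M}_\lambda$ together with Feigin's vanishing theorem (justified there via an Euler--Poincar\'e character computation) to collapse the sum to $\mathbf{M}_0=C^{k_i,k_j}(\gg)$. No substantive differences.
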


We however are not aware of a proof in the literature and so we want to use this section to quickly explain that it is a direct consequence of \cite{F, FGZ}. First, we recall a second point of view on cosets at generic level. For this let $\gg$ be a simple Lie algebra with basis $\mathcal B$ and dual basis $\mathcal B'$. Consider free fermions $\mathcal F(\gg)$ in two copies of the adjoint representation of $\gg$ with generators $\{ b^x, c^{x'} \,  | \, x\in \mathcal B, \, x'\in \mathcal B' \}$ and operator products
\[
b^x(z) c^{y'}(w) \sim \delta_{x, y} (-w)^{-1}.
\]
Let $M$ be a module for $V^{-2h^\vee}(\gg)$ and let $x(z)$ be the field acting on $M$ and corresponding to $x\in \gg$ . Then we define
\[
d(z) := \sum_{x\in \mathcal B} :x(z) c^{x'}(z): - \frac{1}{2} \sum_{x, y \in \mathcal B} :(:b^{[x, y]}(z) c^{x'}(z):)c^{y'}(z):
\] 
and \cite[Prop. 1.2.]{FGZ} says that the zero-mode $d_0$ of $d(z)$ satisfies $d_0^2=0$. Let's write $d$ for $d_0$. 
One then considers the subalgebra $\widetilde{\mathcal F}(\gg)$ of $\mathcal F(\gg)$ generated by the $b^x$ and $\partial c^{x'}$ and the relative complex
\[
C^{\text{rel}}(\gg, d) =  \left(M\otimes \widetilde{\mathcal F}(\gg)\right)^\gg.
\]
$d$ preserves the relative complex \cite[Prop. 1.4.]{FGZ} and the corresponding cohomology is denoted by $H^{\text{rel},\bullet }_{\infty}(\gg, M)$. 
Let $\rho_{\lambda, k}$ be the irreducible highest-weight representation of $\gg \oplus \mathbb C K$ of highest-weight $\lambda$ where $K$ acts by multiplication with the scalar $k$. Then we set
\[
V^k(\lambda) := \text{Ind}_{\gg[t]\oplus \mathbb C K}^{\widehat{\gg}} \rho_{\lambda, k}
\]
and consider the case $M= V^k(\lambda) \otimes V^{-2h^\vee-k}(\mu)$ for $\lambda, \mu \in P^+$ and for generic $k$. Then the first part of the main Theorem of \cite{F} applies (see \cite[Thm. 1.12]{FGZ} for a proof), i.e.
$H^{\text{rel}, n }_{\infty}(\gg, V^k(\lambda) \otimes V^{-2h^\vee-k}(\mu))=0$ for $n\neq 0$. It follows that the character of $H^{\text{rel}, 0 }_{\infty}(\gg, V^k(\lambda) \otimes V^{-2h^\vee-k}(\mu))$ is the Euler-Poincar\'e character of $C^{\text{rel}}(\gg, d)$ which is obtained from the super character of the involved modules ($\Delta_+$ denotes the set of positive roots)
\begin{equation}
\begin{split}
\Pi(z, q) :&= \prod\limits_{n=1}^\infty (1-q^n)^{\text{rank}(\gg)}\prod\limits_{\alpha \in \Delta_+} (1-z^\alpha q^n)(1-z^{-\alpha}q^n)\\
\text{ch}[V^k(\lambda)](z, q) &= \frac{q^{\frac{(\lambda+\rho)^2}{2(k+h^\vee)}-\frac{\text{dim}(\gg)}{24}}\chi_{\rho_\lambda}(z)}{\Pi(z, q)} \\
\text{sch}[ \widetilde{\mathcal F}(\gg)] &= q^{\frac{\text{dim}(\gg)}{12}} \Pi(z, q)^2
\end{split}
\end{equation}
so that 
\[
\text{sch}[V^k(\lambda) \otimes V^{-2h^\vee-k}(\mu)\otimes \widetilde{\mathcal F}(\gg)] =  q^{\frac{(\lambda+\rho)^2-(\mu+\rho)^2}{2(k+h^\vee)} }\chi_{\rho_\lambda}(z)\chi_{\rho_\mu}(z)=  q^{\frac{(\lambda+\rho)^2-(\mu+\rho)^2}{2(k+h^\vee)} }\chi_{\rho_\lambda \otimes \rho_\mu}(z)
\]
and the $\gg$-invariant part, that is the multiplicity of $\chi_0(z)$ is the Euler-Poincar\'e character of $C^{\text{rel}}(\gg, d)$. This is equal to one if $\mu=-\omega_0(\lambda)$\footnote{$\omega_0$ is the unique Weyl group element that interchanges the fundamental Weyl chamber with its negative. }, i.e. $\rho_\mu$ is the dual of $\rho_\lambda$, and zero otherwise. We thus have
\[
H^{\text{rel}, 0 }_{\infty}(\gg, V^k(\lambda) \otimes V^{-2h^\vee-k}(\mu)) = \begin{cases} \mathbb C & \ \text{if} \ \mu=-\omega_0(\lambda) \\ 0 & \ \text{otherwise} \end{cases}
\]
This provides a proof of the second part of the main Theorem of \cite{F} applied to our setting. 

It is now easy to prove Theorem \ref{thm:triality}:
\begin{proof}
Consider $V^{k_1}(\gg) \otimes V^{k_2}(\gg) \otimes V^{k_3}(\gg)$ such that 
$k_1 + k_2 + k_3 = -2h^\vee$ and define the diagonal coset $C^{k, \ell}(\gg) := \text{Com}\left(V^{k+\ell}(\gg), V^k(\gg) \otimes V^\ell(\gg)\right)$. Let $k_1, k_2$ be generic and decompose $V^{k_2}(\gg) \otimes V^{k_3}(\gg)$ into $V^{k_2+k_3}(\gg) \otimes C^{k_2, k_3}(\gg)$-modules to obtain
\[
H^{\text{rel}, 0 }_{\infty}(\gg, V^{k_1}(\gg) \otimes V^{k_2}(\gg) \otimes V^{k_3}(\gg))  = C^{k_2, k_3}(\gg).
\]
Interchanging the role of $k_1$ and $k_2$ respectively $k_1$ and $k_3$ we similarly obtain 
\[
H^{\text{rel}, 0 }_{\infty}(\gg, V^{k_1}(\gg) \otimes V^{k_2}(\gg) \otimes V^{k_3}(\gg))  = C^{k_2, k_3}(\gg) = C^{k_1, k_3}(\gg) = C^{k_2, k_1}(\gg).
\]
\end{proof}

\subsection{Large and small $N=4$ superconformal vertex algebras}
The large $N=4$ superconformal vertex algebra $V^{k,a}$ arises as the minimal $\cW$-algebra of $\gd(2,1; a)$. It depends on two complex parameters $k$ and $a$, and has strong generators $\{e, f, h, e', f', h', L, G^{\pm\pm}\}$. Here $L$ is a Virasoro field of central charge $c=-6k-3$, $e, f, h, e', f', h'$ are even primary fields of weight $1$, and $G^{\pm\pm}$ are odd primary fields of weight $3/2$. The operator product algebra appears explicitly in \cite{KW}, and is reproduced in Appendix \ref{app:first}. Note that $\{e,f,h\}$ and $\{e',f',h'\}$ generate two commuting affine $\mathfrak{sl}_2$ vertex algebras at levels $\ell = - \frac{a+1}{a}k-1$ and $\ell' = -(a+1)k-1$, respectively. We shall denote by $V_{k,a}$ the simple quotient of $V^{k,a}$ by its maximal proper ideal $\cI_{k,a}$ graded by conformal weight. For generic values of $k$ and $a$, $V^{k,a}$ is simple so that $V^{k,a} = V_{k,a}$

The small $N=4$ algebra $V^k$ is likewise the minimal $\cW$-algebra of $\gp\gs\gl(2|2)$, and has generators $\{e,f,h, L, G^{\pm, \pm}\}$, where $\{e,f,h\}$ form a copy of $V^{-k-1}(\gs\gl_2)$ and $G^{\pm, \pm}$ are odd weight $3/2$ primary fields.

There is a relationship between the two that was elucidated in \cite{CGL}. By rescaling the fields $e', f', h'$ by $1/ a$, and replacing the Virasoro field $L$ in $V^{k,a}$ with the coset Virasoro field $L' = L - L^{\gs\gl_2}$ for $\text{Com}(V^{\ell'}(\gs\gl_2), V^{k,a})$, there is a well-defined limit of $V^{k,a}$ as $a\mapsto \infty$, which we denote by $V^{k,\infty}$. In this limit, $e', f', h'$ become central, and hence generete a commutative Heisenberg $Z$ vertex algebra of rank $3$. We have an exact sequence of vertex algebras 
$$ 0 \rightarrow \cI_{Z}  \rightarrow V^{k,\infty} \rightarrow V^k \rightarrow 0.$$ Here $\cI_Z$ is the ideal generated by $Z$. In particular, $V^k$ is obtained as a quotient of $V^{k,\infty}$.

Consider the coset
\begin{equation} D^{k,a} = \text{Com}(V^{\ell}(\gs\gl_2) \otimes V^{\ell'}(\gs\gl_2), V^{k,a}).\end{equation} A surprising result which we shall prove in the next section, is that we have an isomorphism of two-parameter vertex algebras $C^{k_1, k_2}(\gs\gl_2) \cong D^{k,a}$; see Corollary \ref{cor:cosetiso}. Here the parameters are related by 
$$ k_1 = - \frac{1 + k + a k}{(1 + a) k}, \qquad k_2 = - \frac{a + k + a k}{(1 + a) k}.$$ Note that the symmetry $k_1 \leftrightarrow k_2$ of $C^{k_1, k_2}(\gs\gl_2)$ corresponds to the symmetry $a \leftrightarrow \frac{1}{a}$ of $D^{k,a}$.

Similarly, we will consider the coset 
\begin{equation} D^k = \text{Com}(V^{-k-1}(\gs\gl_2), V^k).\end{equation} The outer automorphism group $\text{SL}_2$ of $V^k$ acts on $D^k$, and we shall prove that $(D^k)^{\text{SL}_2} \cong C^{k_1, -1}$ as one-parametrer vertex algebras; see Corollary \ref{cor:smallN4}. Here the parameters are related by $\displaystyle k = -\frac{1}{1+k_1}$.

\section{Coset construction of large and small $N=4$ superconformal algebras}
It is an important problem to construct $\cW$-algebras as cosets. Recently, coset constructions of the principal $\cW$-algebras for simply-laced $\gg$ were given in full generality \cite{ACL}. Here, we give coset constructions for both the large and small superconformal algebras. These are the first examples of coset realizations of non-principal $\cW$-algebras, in this case minimal $\cW$-algebras, as one-parameter families. We shall see that these realizations are closely related to the diagonal coset $C^{k_1, k_2}(\gs\gl_2)$, which for convenience we denote by $C^{k_1, k_2}$.

\subsection{The large $N=4$ super conformal algebra as a coset}\label{sec:N4coset}

Let $\lambda, \mu$ be generic complex numbers, and define $k_1 = \lambda^{-1} -1, k_2= \lambda -1, \ell_1 = \mu^{-1} -1, \ell_2=\mu -1$. Denote by $V^k(\lambda)$ the irreducible highest-weight module of highest weight $\lambda$  and level $k$ of the affine Lie algebra of $\gs\gl_2$. Let us also write $V^k(n)$ for $V^k(n\omega)$ with $\omega$ the fundamental weight of $\gs\gl_2$. 
Then from \cite{CG} we have that 
\[
L_1(\gd(2, 1;-\lambda)) \cong \bigoplus_{n=0}^\infty V^{k_1}(n) \otimes V^{k_2}(n) \otimes L_1(\bar n)
 \]
with $L_1(\gs\gl_2)$ the simple affine vertex operator algebra of $\gs\gl_2$ at level one and $\bar n$ equal to zero if $n$ is even and one otherwise. We consider
\[
X(\lambda, \mu) := L_1(\gd(2, 1;-\lambda)) \otimes L_1(\gd(2, 1;-\mu))
\]
which carries an action of $\mathbb{Z}_2$ induced from parity on each factor. The decomposition into $\mathbb{Z}_2$-modules is
\begin{equation}
\begin{split}
X(\lambda, \mu) &= \bigoplus_{a, b \in \{0, 1\}} X(\lambda, \mu)_{a, b} \\
X(\lambda, \mu)_{a, b} &= \bigoplus_{\substack{ n, m=0\\  (n, m) = (a, b) \mod 2}}^\infty V^{k_1}(n) \otimes V^{k_2}(n) \otimes L_1(\bar n)\otimes V^{\ell_1}(m) \otimes V^{\ell_2}(m) \otimes L_1(\bar m)
\end{split}
\end{equation}
and especially the even subalgebra decomposes as $ X(\lambda, \mu)_{\text{even}} \cong X(\lambda, \mu)_{0, 0}\oplus  X(\lambda, \mu)_{1, 1}$. We now assume $\lambda, \mu$ to be irrational so that by \cite{KL1, KL2, KL3, KL4, Zha, Hu} the categories of highest-weight modules for $V^k(\gs\gl_2)$ for $k\in \{k_1, k_2, \ell_1, \ell_2\}$ all have rigid vertex tensor category structure so that by \cite{CKM} the $X(\lambda, \mu)_{0, 0}$-modules appearing in $ X(\lambda, \mu)_{\text{even}} $ also belong to a vertex tensor category and so by 
  \cite[Theorem 3. 1]{CKLR} $X(\lambda, \mu)_{1, 1}$ is a simple current for $X(\lambda, \mu)_{0, 0}$, i.e.
\[
X(\lambda, \mu)_{1, 1} \boxtimes_{X(\lambda, \mu)_{0, 0}} X(\lambda, \mu)_{1, 1} \cong X(\lambda, \mu)_{0, 0}.
\]
Let 
\[
Y(\lambda, \mu)_a := \bigoplus_{\substack{ n, m=0\\  n, m = a \mod 2}}^\infty V^{k_1}(n) \otimes V^{k_2}(n) \otimes V^{\ell_1}(m) \otimes V^{\ell_2}(m) 
\]
and recall that the vertex superalgebra of four free fermions $F(4)$ decomposes as a  $L_1(\gs\gl_2) \otimes  L_1(\gs\gl_2)$ as
\[
F(4) \cong L_1(\gs\gl_2) \otimes  L_1(\gs\gl_2) \oplus L_1(1) \otimes  L_1(1)
\]
with even part $L_1(\gs\gl_2) \otimes  L_1(\gs\gl_2)$ and odd one being $ L_1(1) \otimes  L_1(1)$. 
We thus have that 
\[
X(\lambda, \mu)_{\text{even}} \cong Y(\lambda, \mu)_0 \otimes F(4)_{\text{even}} \oplus Y(\lambda, \mu)_1 \otimes F(4)_{\text{odd}}.
\]
By the main Theorem of \cite{CKM2} this implies that there is a braid-reversed equivalence between the vertex tensor subcategory of $F(4)_{\text{even}}$ whose simple inequivalent objects are $F(4)_{\text{even}}$ and $F(4)_{\text{odd}}$ and the category of $Y(\lambda, \mu)_0$ modules with simples $Y(\lambda, \mu)_0$ and $Y(\lambda, \mu)_1$. Since $F(4) \cong F(4)_{\text{even}} \oplus F(4)_{\text{odd}}$ has the structure of a simple vertex operator superalgebra the same must be true for $Y(\lambda, \mu)= Y(\lambda, \mu)_0 \oplus Y(\lambda, \mu)_1$. 
\begin{thm}\label{thm:N4}
For irrational $\lambda, \mu$ we have that 
\[
V^{-\frac{\lambda\mu}{\lambda+\mu}, \frac{\mu}{\lambda}} \cong  \text{Com}\left(V^{k_1 +\ell_1}(\gs\gl_2), Y(\lambda, \mu)\right).
\]
\end{thm}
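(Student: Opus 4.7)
The plan is to identify, inside $\text{Com}(V^{k_1+\ell_1}(\gs\gl_2), Y(\lambda, \mu))$, an explicit strong generating set for the large $N=4$ superconformal algebra $V^{k,a}$ at the claimed parameter values, and then to match graded characters with respect to the affine subalgebra to conclude that nothing else is present.

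I would start by pinning down the candidates for the generators. With $k = -\lambda\mu/(\lambda+\mu)$ and $a = \mu/\lambda$, direct substitution gives $\ell = -\tfrac{a+1}{a}k - 1 = \lambda - 1 = k_2$ and $\ell' = -(a+1)k - 1 = \mu - 1 = \ell_2$, so the two affine subalgebras of $V^{k,a}$ sit at precisely the levels of $V^{k_2}(\gs\gl_2)$ and $V^{\ell_2}(\gs\gl_2)$. The $n=m=0$ summand of $Y(\lambda, \mu)_0$ is $V^{k_1}(\gs\gl_2) \otimes V^{k_2}(\gs\gl_2) \otimes V^{\ell_1}(\gs\gl_2) \otimes V^{\ell_2}(\gs\gl_2)$, and the coset by the diagonal $V^{k_1+\ell_1}(\gs\gl_2) \hookrightarrow V^{k_1}(\gs\gl_2) \otimes V^{\ell_1}(\gs\gl_2)$ manifestly contains the commuting pair $V^{k_2}(\gs\gl_2) \otimes V^{\ell_2}(\gs\gl_2)$, which will play the role of $e,f,h,e',f',h'$. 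For the odd weight-$3/2$ supercharges $G^{\pm\pm}$, I would exploit that the supercharges of $L_1(\gd(2, 1;-\lambda))$ live in the $n=1$ summand $V^{k_1}(1) \otimes V^{k_2}(1) \otimes L_1(1)$ of its decomposition, and those of $L_1(\gd(2, 1;-\mu))$ in $V^{\ell_1}(1) \otimes V^{\ell_2}(1) \otimes L_1(1)$. Since $L_1(1) \otimes L_1(1) = F(4)_{\text{odd}}$, the identification
\[
X(\lambda, \mu)_{\text{even}} \cong Y(\lambda, \mu)_0 \otimes F(4)_{\text{even}} \oplus Y(\lambda, \mu)_1 \otimes F(4)_{\text{odd}}
\]
places the relevant products of supercharges in the $n=m=1$ summand $V^{k_1}(1) \otimes V^{k_2}(1) \otimes V^{\ell_1}(1) \otimes V^{\ell_2}(1)$ of $Y(\lambda, \mu)_1$. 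Extracting the singlet under the diagonal $V^{k_1+\ell_1}(\gs\gl_2)$ via the one-dimensional projection $V^{k_1}(1) \otimes V^{\ell_1}(1) \to \mathbb{C}$ then produces a four-dimensional weight-$3/2$ multiplet transforming in the $(1) \otimes (1)$ representation of $\gs\gl_2 \oplus \gs\gl_2$, with exactly the quantum numbers of $G^{\pm\pm}$.

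The heart of the proof is to check that these candidates close into the large $N=4$ OPE algebra of Appendix \ref{app:first} with the structure constants dictated by $(k, a)$. A direct calculation should show that the coset Virasoro on $Y(\lambda, \mu)$ has central charge $c = -6k - 3$, matching that of $V^{k,a}$. For the remaining OPEs — notably $G^{++} \cdot G^{--}$ and $G^{+-} \cdot G^{-+}$ — rather than computing every coefficient by hand, I would invoke the rigidity of the large $N=4$ family: any vertex algebra strongly generated by weight-$1$ fields realizing $V^{k_2}(\gs\gl_2) \otimes V^{\ell_2}(\gs\gl_2)$, together with a weight-$3/2$ quartet of supercharges in $(1) \otimes (1)$ and a Virasoro $L$ of the given central charge, is a quotient of $V^{k,a}$ for the unique $(k, a)$ compatible with the affine levels. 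This uniqueness step is where I expect most of the real work to go, since it requires verifying enough of the OPE algebra to pin down both $k$ and $a$ simultaneously.

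Finally, to rule out extra generators I would compare graded characters. The branching of $Y(\lambda, \mu)$ under the diagonal $V^{k_1+\ell_1}(\gs\gl_2)$ can be read off from its explicit decomposition into fourfold tensor products of highest-weight modules together with the generic-level fusion rules for $V^k(\gs\gl_2)$-modules; the resulting character of the coset $\text{Com}(V^{k_1+\ell_1}(\gs\gl_2), Y(\lambda, \mu))$ matches that of $V^{k,a}$ regarded as a module over its affine subalgebra $V^{k_2}(\gs\gl_2) \otimes V^{\ell_2}(\gs\gl_2)$, the latter being known from the minimal quantum Hamiltonian reduction of $\gd(2, 1; a)$. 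Combined with the simplicity of $V^{k,a}$ at generic $(k, a)$ and of $Y(\lambda, \mu)$ (established just before the theorem), the injective map $V^{k,a} \hookrightarrow \text{Com}(V^{k_1+\ell_1}(\gs\gl_2), Y(\lambda, \mu))$ built in the previous step is then forced to be an isomorphism.
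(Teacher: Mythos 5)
Your proposal follows essentially the same route as the paper: locate $V^{k_2}(\gs\gl_2)\otimes V^{\ell_2}(\gs\gl_2)$ and the weight-$3/2$ quartet in the coset, appeal to the uniqueness/rigidity of minimal $\cW$-algebras (the paper cites \cite[Thm.~3.1]{ACKL} for exactly the step you flag as the ``real work''), and use the character identity of Corollary \ref{cor:char} together with simplicity to see that the resulting copy of $V^{k,a}$ exhausts the coset. The only cosmetic difference is that the paper also uses the character match to justify OPE closure of the subalgebra generated by these fields before invoking uniqueness.
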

\begin{proof} Let $C(\lambda, \mu):= \text{Com}\left(V^{k_1+\ell_1}(\gs\gl_2), Y(\lambda, \mu)\right)$ and let
 $k=-\frac{\lambda\mu}{\lambda+\mu}$ and $a= \frac{\mu}{\lambda}$.
\begin{enumerate}
\item
Both $V^{k, a}$ and $C(\lambda, \mu)$ contain $V^{k_2}(\gs\gl_2) \otimes V^{\ell_2}(\gs\gl_2)$ as subalgebra, and moreover we check that the top level subspace of $Y(\lambda, \mu)_1$ has conformal weight $3/2$. Restricting to $C(\lambda, \mu)$ we see that $C(\lambda, \mu)$ has four fields of conformal weight $3/2$ which of course carry the tensor product of the standard representations of the two affine $\gs\gl_2$'s. 
 \item Characters coincide by Corollary \ref{cor:char} that will be proven below.
  \item   $C(\lambda, \mu)$ is a simple vertex algebra by \cite[Lemma 2.1]{ACKL} and by the first step it has a subalgebra that is weakly generated by the weight one and weight $3/2$ fields together with the Virasoro field. By the second step this subalgebra closes under OPE. It thus follows from the uniqueness Theorem \cite[Theorem 3.1]{ACKL} that this subalgebra is isomorphic to $V^{k, a}$ and by the character argument of step two this is everything. 
 \end{enumerate}
\end{proof}
Since $ \text{Com}\left(V^{k_1 +\ell_1}(\gs\gl_2)\otimes V^{k_2}(\gs\gl_2) \otimes V^{\ell_2}(\gs\gl_2), Y(\lambda, \mu)\right) \cong C^{\lambda^{-1}-1, \mu^{-1}-1}$ we immediately have
\begin{cor} \label{cor:cosetiso}
For generic $\lambda, \mu$
\[
D^{-\frac{\lambda\mu}{\lambda+\mu}, \frac{\mu}{\lambda}} \cong C^{\lambda^{-1}-1, \mu^{-1}-1}.
\]
\end{cor}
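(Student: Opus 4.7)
The plan is to combine Theorem~\ref{thm:N4} with a routine iterated coset computation. First I would verify that the two affine subalgebras $V^{\ell}(\gs\gl_2)\otimes V^{\ell'}(\gs\gl_2)\subseteq V^{k,a}$ coincide with $V^{k_2}(\gs\gl_2)\otimes V^{\ell_2}(\gs\gl_2)$ under the parametrization $k=-\lambda\mu/(\lambda+\mu)$, $a=\mu/\lambda$. Substituting into the formulas $\ell=-\tfrac{a+1}{a}k-1$ and $\ell'=-(a+1)k-1$ gives $\ell=\lambda-1=k_2$ and $\ell'=\mu-1=\ell_2$, so by the definition of $D^{k,a}$,
\[
D^{k,a}=\text{Com}\bigl(V^{k_2}(\gs\gl_2)\otimes V^{\ell_2}(\gs\gl_2),\,V^{k,a}\bigr).
\]

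Combining this with Theorem~\ref{thm:N4} and the standard nested-commutant identity $\text{Com}(\cA_1,\text{Com}(\cA_2,\cV))=\text{Com}(\cA_1\otimes\cA_2,\cV)$, valid because $V^{k_1+\ell_1}(\gs\gl_2)$ and $V^{k_2}(\gs\gl_2)\otimes V^{\ell_2}(\gs\gl_2)$ mutually commute inside $Y(\lambda,\mu)$, yields
\[
D^{k,a}\cong\text{Com}\bigl(V^{k_1+\ell_1}(\gs\gl_2)\otimes V^{k_2}(\gs\gl_2)\otimes V^{\ell_2}(\gs\gl_2),\,Y(\lambda,\mu)\bigr).
\]

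The crux is to identify the inner commutant by $V^{k_2}(\gs\gl_2)\otimes V^{\ell_2}(\gs\gl_2)$ in $Y(\lambda,\mu)$ with $V^{k_1}(\gs\gl_2)\otimes V^{\ell_1}(\gs\gl_2)$. Using the explicit decomposition of $Y(\lambda,\mu)$ as a module over the four commuting affine subalgebras $V^{k_1}(\gs\gl_2)\otimes V^{k_2}(\gs\gl_2)\otimes V^{\ell_1}(\gs\gl_2)\otimes V^{\ell_2}(\gs\gl_2)$, the commutant is computed summand by summand: on each factor $V^{k_1}(n)\otimes V^{k_2}(n)\otimes V^{\ell_1}(m)\otimes V^{\ell_2}(m)$ it picks out $V^{k_1}(n)\otimes(V^{k_2}(n))^{\gs\gl_2[t]}\otimes V^{\ell_1}(m)\otimes(V^{\ell_2}(m))^{\gs\gl_2[t]}$. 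At irrational level $V^{k_2}(n)$ is a Weyl module whose conformal-weight-zero subspace is the simple $(n+1)$-dimensional $\gs\gl_2$-module; the $\gs\gl_2[t]$-invariants in $V^{k_2}(n)$ are exactly this top space intersected with the $\gs\gl_2$-invariants, so they form $\mathbb{C}$ when $n=0$ and vanish for $n\geq 1$, and similarly for $V^{\ell_2}(m)$. Only the sector $n=m=0$ therefore survives, yielding the claimed identification.

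With the inner commutant in hand, the outer coset by the diagonal embedding $V^{k_1+\ell_1}(\gs\gl_2)\hookrightarrow V^{k_1}(\gs\gl_2)\otimes V^{\ell_1}(\gs\gl_2)$ is $C^{k_1,\ell_1}=C^{\lambda^{-1}-1,\mu^{-1}-1}$ by \eqref{eq:defofck1k2}, completing the proof. The main obstacle is the inner commutant step: one must verify rigorously that only the $n=m=0$ summand contributes as a vertex subalgebra, with no extra commutant elements arising from algebraic combinations across distinct summands. At irrational level this follows from the irreducibility of each $V^{k_i}(n)$ and $V^{\ell_i}(m)$ combined with the fact that the $\gs\gl_2[t]$-action respects the direct sum decomposition, so the invariant subspace really is computed factor by factor.
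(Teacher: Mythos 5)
Your argument is correct and follows the same route as the paper, which simply asserts that $\text{Com}\bigl(V^{k_1+\ell_1}(\gs\gl_2)\otimes V^{k_2}(\gs\gl_2)\otimes V^{\ell_2}(\gs\gl_2), Y(\lambda,\mu)\bigr)\cong C^{\lambda^{-1}-1,\mu^{-1}-1}$ and deduces the corollary "immediately" from Theorem \ref{thm:N4}. You have merely supplied the details the paper leaves implicit — the level check $\ell=k_2$, $\ell'=\ell_2$, the nested-commutant identity, and the summand-by-summand vanishing of $\gs\gl_2[t]$-invariants in the non-vacuum sectors — all of which are sound.
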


\subsubsection{Character decomposition}

We need the character decomposition in order to finish Theorem \ref{thm:N4}. It says 
\begin{thm} \label{thm:char}For irrational $\lambda, \mu$
\begin{equation}
\begin{split}
\mathrm{ch}[Y(\lambda, \mu)] &=  \sum_{m=0}^\infty \mathrm{ch}[V^{k_1+\ell_1}](2m)\  V_m
\end{split}
\end{equation}
with 
\[
V_m =  \frac{q^{m+\frac{1}{8}-\frac{(m+1)^2}{4(\lambda^{-1}+\mu^{-1})}}   \prod\limits_{\substack{n=0\\ n\neq m}}^\infty  (1+xq^{n+\frac{1}{2}}) (1+x^{-1}q^{n+\frac{1}{2}}) (1+yq^{n+\frac{1}{2}}) (1+y^{-1}q^{n+\frac{1}{2}})}{\left(\prod\limits_{\substack{n=1\\ n\neq 2m+1}}^\infty (1-q^n) \right) \left(\prod\limits_{n=1}^\infty   (1-q^n)^2 (1-z_1q^n) (1-z_1^{-1}q^n)(1-z_2q^n) (1-z_2^{-1}q^n) \right)}.
\]
\end{thm}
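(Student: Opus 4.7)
The plan is to compute $\text{ch}[Y(\lambda, \mu)]$ via the vector-space identification with $X(\lambda, \mu) = L_1(\gd(2,1;-\lambda)) \otimes L_1(\gd(2,1;-\mu))$, and then decompose by the diagonal affine subalgebra $V^{k_1+\ell_1}(\gs\gl_2) \hookrightarrow V^{k_1}(\gs\gl_2) \otimes V^{\ell_1}(\gs\gl_2)$. The key observation is that the summands of $X(\lambda,\mu)$ with $n \equiv m \pmod 2$ assemble as $Y(\lambda,\mu) \otimes F(4)$ as graded vector spaces: when $n, m$ are both even (resp.\ both odd), $L_1(\bar n) \otimes L_1(\bar m)$ is $L_1(0) \otimes L_1(0) \cong F(4)_{\text{even}}$ (resp.\ $L_1(1) \otimes L_1(1) \cong F(4)_{\text{odd}}$), and these two cases jointly exhaust $F(4)$. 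Consequently
\[
\text{ch}[Y(\lambda,\mu)] \cdot \text{ch}[F(4)] = \sum_{\substack{n, m \geq 0 \\ n \equiv m \pmod 2}} \text{ch}[V^{k_1}(n)]\, \text{ch}[V^{k_2}(n)]\, \text{ch}[V^{\ell_1}(m)]\, \text{ch}[V^{\ell_2}(m)]\, \text{ch}[L_1(\bar n) \otimes L_1(\bar m)].
\]

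For irrational $\lambda, \mu$, every level $k_i, \ell_i$ is generic, so each $V^k(n)$ coincides with its Verma module and has the standard character $\chi_n(u)\, q^{n(n+2)/(4(k+2))} / \prod_{j \geq 1}(1-q^j)(1-uq^j)(1-u^{-1}q^j)$, while the level-one characters $\text{ch}[L_1(\bar n)]$ are classical theta functions. Combining the theta sums from $L_1(\bar n) \otimes L_1(\bar m)$ with $\text{ch}[F(4)] = \prod_{n \geq 0}(1+xq^{n+1/2})(1+x^{-1}q^{n+1/2})(1+yq^{n+1/2})(1+y^{-1}q^{n+1/2})$ via the Jacobi triple product produces the product-form numerator and denominator visible in $V_m$. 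To reach the stated decomposition, one next applies the generic-level affine Clebsch--Gordan rule $V^{k_1}(n) \otimes V^{\ell_1}(m) \cong \bigoplus_{p \equiv n+m \pmod 2,\, |n-m| \leq p \leq n+m} V^{k_1+\ell_1}(p) \otimes B^p_{n, m}(q)$, with the branching functions $B^p_{n,m}(q)$ determined by the Weyl--Kac denominator identity. Since $n + m$ is constrained to be even, every $p$ that arises is even, and grouping by $p = 2m'$ produces $\text{ch}[Y(\lambda, \mu)] = \sum_{m' \geq 0} \text{ch}[V^{k_1+\ell_1}(2m')] \cdot V_{m'}$.

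The main obstacle is the final identification of $V_{m'}$ with the explicit product formula. The ``gap'' factors $\prod_{n \neq m}$ in the fermionic numerator and $\prod_{n \neq 2m+1}$ in the bosonic denominator signal that a single term is peeled off from each infinite product when a Jacobi--triple--product identity is applied to collapse the theta-function sum arising from the branching. This reduces to a concrete $q$-series identity of Macdonald-denominator flavor; a useful preliminary check is to verify the leading $q$-coefficients of both sides directly before carrying out the general identity.
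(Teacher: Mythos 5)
Your proposal has a genuine gap: the entire analytic content of the theorem --- collapsing the double sum defining $\mathrm{ch}[Y(\lambda,\mu)]$ into the explicit product formula for $V_m$ --- is deferred to an unnamed ``$q$-series identity of Macdonald-denominator flavor,'' with only a suggestion to check leading $q$-coefficients. That identity \emph{is} the theorem. The paper's proof pins it down precisely: it starts not from Verma characters but from the closed theta-function form of $\mathrm{ch}[L_1(\gd(2,1;-\lambda))]$ given in \cite[Cor.~9.8]{CG}, rewrites $\mathrm{ch}[Y(\lambda,\mu)]$ in terms of the free-fermion character $F$ at shifted arguments (after setting $w=w_1=w_2$, $x=z_1z_2$, $y=z_1z_2^{-1}$), and then applies the Kac--Wakimoto ``magic identity'' (equation A.2 of \cite{CR}) to get the partial-fraction expansion $\frac{\eta(q)^2F(w^2x,y)}{\Pi(w)}=-\frac{F(x,y)}{\eta(q)}\sum_{m\in\mathbb Z}\frac{w^{2m+1}}{1+xq^{m+1/2}}$. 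Combining four such terms yields coefficients $X_m$ whose denominators $(1+x^{\pm1}q^{m+1/2})(1+y^{\pm1}q^{m+1/2})$ and whose factor $(1-q^{2m+1})$ are exactly the ``gap'' factors you noticed in $V_m$; the antisymmetry $X_m=-X_{-m-1}$ then folds the bilateral sum into $\sum_{m\geq 0}\mathrm{ch}[V^{k_1+\ell_1}](2m)\,V_m$ via the Weyl--Kac character formula for generic level. Without invoking or proving an identity of this strength, your argument does not reach the stated formula.

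There are also two concrete errors in your setup. First, $\mathrm{ch}[Y(\lambda,\mu)]\cdot\mathrm{ch}[F(4)]$ is not the character of $X(\lambda,\mu)_{\text{even}}$: the latter is $Y(\lambda,\mu)_0\otimes F(4)_{\text{even}}\oplus Y(\lambda,\mu)_1\otimes F(4)_{\text{odd}}$, which omits the cross terms $Y(\lambda,\mu)_0\otimes F(4)_{\text{odd}}$ and $Y(\lambda,\mu)_1\otimes F(4)_{\text{even}}$ present in the full tensor product. (This is repairable only because $\mathrm{ch}[Y(\lambda,\mu)]$ can be written directly as the double sum over $n\equiv m \pmod 2$ with no fermionic factor at all.) Second, the ``generic-level affine Clebsch--Gordan rule'' you invoke, with $p$ confined to $|n-m|\leq p\leq n+m$, is the finite-dimensional $\gs\gl_2$ rule and is false for affine vertex algebras: already for $n=m=0$ the module $V^{k_1}(\gs\gl_2)\otimes V^{\ell_1}(\gs\gl_2)$ contains $V^{k_1+\ell_1}(2m)$ for every $m\geq 0$ --- otherwise the diagonal coset would be trivial and the sum in the theorem would truncate to its $m=0$ term.
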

The character of $V^{-\frac{\lambda\mu}{\lambda+\mu}, \frac{\mu}{\lambda}}$ is
\[
\mathrm{ch}[V^{-\frac{\lambda\mu}{\lambda+\mu}, \frac{\mu}{\lambda}}] = q^{-\frac{c}{24}}\frac{\prod\limits_{n=1}^\infty  (1+xq^{n+\frac{1}{2}}) (1+x^{-1}q^{n+\frac{1}{2}}) (1+yq^{n+\frac{1}{2}}) (1+y^{-1}q^{n+\frac{1}{2}})}{\prod\limits_{n=1}^\infty (1-q^{n+1}) (1-q^n)^2  (1-z_1q^n) (1-z_1^{-1}q^n)(1-z_2q^n) (1-z_2^{-1}q^n) }
\]
with $c=-3+\frac{6}{\lambda^{-1}+\mu^{-1}}$.
Especially for $m=0$, Theorem \ref{thm:char} specializes to
\begin{cor}\label{cor:char}
For irrational $\lambda, \mu$
\begin{equation}\nonumber
\begin{split}
V_0 &= \mathrm{ch}\left[\text{Com}\left(V_{k_1 +\ell_1}(\gs\gl_2), Y(\lambda, \mu)\right)\right]  \\
&= q^{\frac{1}{8}-\frac{1}{4(\lambda^{-1}+\mu^{-1})}}  \frac{\prod\limits_{n=1}^\infty  (1+xq^{n+\frac{1}{2}}) (1+x^{-1}q^{n+\frac{1}{2}}) (1+yq^{n+\frac{1}{2}}) (1+y^{-1}q^{n+\frac{1}{2}})}{\prod\limits_{n=1}^\infty (1-q^{n+1})(1-q^n)^2   (1-z_1q^n) (1-z_1^{-1}q^n)(1-z_2q^n) (1-z_2^{-1}q^n) }\\
&= \mathrm{ch}[V^{-\frac{\lambda\mu}{\lambda+\mu}, \frac{\mu}{\lambda}}].
\end{split}
\end{equation}
\end{cor}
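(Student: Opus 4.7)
The plan is to derive Corollary \ref{cor:char} as a direct specialization of Theorem \ref{thm:char} at $m=0$, combined with a comparison of central charges with the displayed character formula for $V^{-\lambda\mu/(\lambda+\mu),\mu/\lambda}$.

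First I would explain the representation-theoretic meaning of the coefficient $V_m$. For irrational $\lambda$ and $\mu$ the subalgebra $V^{k_1+\ell_1}(\gs\gl_2)\subseteq Y(\lambda,\mu)$ is simple, and the decomposition of Theorem \ref{thm:char} expresses $Y(\lambda,\mu)$ as a sum of isotypic components for this diagonal affine subalgebra, with $V_m$ the graded character of the multiplicity space of the irreducible highest-weight module $V^{k_1+\ell_1}(2m)$. In particular, the multiplicity space of the vacuum module (the $m=0$ summand) is exactly the commutant $\text{Com}(V_{k_1+\ell_1}(\gs\gl_2),Y(\lambda,\mu))$, so $V_0=\mathrm{ch}\left[\text{Com}(V_{k_1+\ell_1}(\gs\gl_2),Y(\lambda,\mu))\right]$. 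This gives the first equality in the corollary essentially for free from Theorem \ref{thm:char}.

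Next I would specialize the explicit formula for $V_m$ to $m=0$. The prefactor becomes $q^{1/8-1/(4(\lambda^{-1}+\mu^{-1}))}$; the numerator product over $n\neq m$ becomes the full product $\prod_{n=0}^{\infty}(1+xq^{n+1/2})(1+x^{-1}q^{n+1/2})(1+yq^{n+1/2})(1+y^{-1}q^{n+1/2})$ starting at $n=1$; and the denominator product $\prod_{n\geq1,\ n\neq 2m+1}(1-q^n)$ becomes $\prod_{n\geq 2}(1-q^n)=\prod_{n\geq 1}(1-q^{n+1})$. Every other factor carries over unchanged. This produces exactly the expression written in the corollary.

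Finally, to identify this with $\mathrm{ch}[V^{-\lambda\mu/(\lambda+\mu),\mu/\lambda}]$, I would match the $q$-exponents. The fermionic and bosonic products coincide verbatim with those in the displayed character of $V^{k,a}$, so the entire content is to check that $-c/24=\tfrac{1}{8}-\tfrac{1}{4(\lambda^{-1}+\mu^{-1})}$, which is immediate from $c=-3+6/(\lambda^{-1}+\mu^{-1})$. This closes the chain of equalities and hence the corollary.

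The only real obstacle is the identification of $V_m$ with the character of the multiplicity space, which depends on having $V^{k_1+\ell_1}(\gs\gl_2)$ semisimple on $Y(\lambda,\mu)$ and the branching into highest-weight modules being correctly tracked by Theorem \ref{thm:char}; this is where the irrationality hypothesis on $\lambda,\mu$ (inherited from the setup of the preceding subsection) is essential. Once Theorem \ref{thm:char} is in hand, the corollary is a one-line $m=0$ specialization together with an elementary central charge check.
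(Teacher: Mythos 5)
Your proposal matches the paper's own (implicit) argument exactly: the paper derives the corollary simply by specializing Theorem \ref{thm:char} at $m=0$ and comparing the resulting prefactor with $q^{-c/24}$ for $c=-3+6/(\lambda^{-1}+\mu^{-1})$, which is precisely your chain of steps. Your additional remarks on why $V_0$ is the character of the commutant (complete reducibility over the simple affine subalgebra at irrational level) correctly fill in what the paper leaves tacit, so the proposal is correct and takes the same route.
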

We now prove Theorem \ref{thm:char}.
\begin{proof}
We need the theta functions of the root lattice $A_1$
\[
\theta_{A_1}(z) =\theta_{A_1}(z,q) = \sum_{\substack{m\in \mathbb Z\\ m \ \text{even} }} z^m q^{\frac{m^2}{4}}, \qquad \theta_{A_1 + (1)}(z)=\theta_{A_1 + (1)}(z,q) = \sum_{\substack{m\in \mathbb Z\\ m \ \text{odd} }} z^m q^{\frac{m^2}{4}}
\]
and the Weyl denominator of $A_1$
\[
\Pi(z) = \Pi(z, q) = q^{\frac{1}{8}} (z-z^{-1}) \prod_{n=1}^\infty (1-z^2q^n)(1-q^n)(1-z^{-2}q^n).
\]
By \cite[Cor. 9.8]{CG} the character of $L_1(\gd(2, 1;-\lambda)$ for $|z^{\pm 1}q|, |w^{\pm 1}q| < 1$ has the nice form
\[
 \frac{\theta_{A_1}(v)(\theta_{A_1 + (1)}(zw) -\theta_{A_1 + (1)}(zw^{-1}) ) + \theta_{A_1 + (1)}(v) (\theta_{A_1}(zw) -\theta_{A_1}(zw^{-1}) )   }{\eta(q) \Pi(z) \Pi(w)}
\]
with the Dedekind $\eta$-function. From now on $|z_i^{\pm 1}q|, |w_i^{\pm 1}q| < 1$. It follows immediately that 
\begin{equation}
\begin{split}
\text{ch}[Y(\lambda, \mu)] &= \frac{(\theta_{A_1 + (1)}(z_1w_1) -\theta_{A_1 + (1)}(z_1w_1^{-1}) )(\theta_{A_1 + (1)}(z_2w_2) -\theta_{A_1 + (1)}(z_2w_2^{-1}) )  }{\Pi(z_1)\Pi(z_2) \Pi(w_1)\Pi(w_2)} + \\
&\quad  \frac{(\theta_{A_1}(z_1w_1) -\theta_{A_1}(z_1w_1^{-1}) )(\theta_{A_1}(z_2w_2) -\theta_{A_1}(z_2w_2^{-1}) )  }{\Pi(z_1)\Pi(z_2) \Pi(w_1)\Pi(w_2)}.
\end{split}
\end{equation}
Let $F$ be the character of the vertex superalgebra of four free fermions, that is
\begin{equation}
\begin{split}
F(x, y) &= q^{-\frac{1}{12}} \prod_{n=1}^\infty (1+xq^{n-\frac{1}{2}}) (1+x^{-1}q^{n-\frac{1}{2}}) (1+yq^{n-\frac{1}{2}}) (1+y^{-1}q^{n-\frac{1}{2}}) \\
&= \frac{\theta_{A_1}(\sqrt{xy})\theta_{A_1}(\sqrt{xy^{-1}})+\theta_{A_1+(1)}(\sqrt{xy})\theta_{A_1+(1)}(\sqrt{xy^{-1}})}{\eta(q)^2}.
\end{split}
\end{equation}
We set $w=w_1=w_2$,  $x=z_1z_2$ and $y=z_1z_2^{-1}$ so that
\begin{equation}\nonumber
\begin{split}
\text{ch}[Y(\lambda, \mu)] &=  \frac{\left(F(xw^2, y)+ F(x^{-1}w, y)-F(yw^2, x)-F(y^{-1}w, x)\right)}{\eta(q)^{-2} \Pi(z_1)\Pi(z_2) \Pi(w)^2}.
\end{split}
\end{equation}
We now restrict to $|q| < |w| < 1$ and use the magic identity of \cite{KW2}, that is we use equation A.2 of \cite{CR} with $u=-w^2$ and $v=xq^{-\frac{1}{2}}$, to obtain
\[
\frac{\eta(q)^2 F(w^2x, y)}{\Pi(w)} = -\frac{F(x, y)}{\eta(q)} \sum_{m\in \mathbb Z} \frac{w^{2m+1}}{1+xq^{m+\frac{1}{2}}}
\]
and so the coefficients defined by 
\begin{equation}\nonumber
\begin{split}
\Pi(w)\text{ch}[Y(\lambda, \mu)] &=  \sum_{m \in \mathbb Z} w^{2m+1} X_m
\end{split}
\end{equation}
are 
\begin{equation}\nonumber
\begin{split}
X_m &=- \frac{F(x, y)}{\Pi(z_1)\Pi(z_2)\eta(q)} \left( \frac{1}{1+xq^{m+\frac{1}{2}}} +  \frac{1}{1+x^{-1}q^{m+\frac{1}{2}}} - \frac{1}{1+yq^{m+\frac{1}{2}}} -  \frac{1}{1+y^{-1}q^{m+\frac{1}{2}}}\right)\\
&= \frac{F(x, y)}{\Pi(z_1)\Pi(z_2)\eta(q)}  \frac{q^{m+\frac{1}{2}}(1-q^{2m+1})(z_1-z_1^{-1})(z_2-z_2^{-1}) }{(1+xq^{m+\frac{1}{2}})(1+x^{-1}q^{m+\frac{1}{2}}) (1+yq^{m+\frac{1}{2}})(1+y^{-1}q^{m+\frac{1}{2}})}\\
&= q^{m+\frac{1}{8}} \frac{\prod\limits_{\substack{n=0\\ n\neq m}}^\infty  (1+xq^{n+\frac{1}{2}}) (1+x^{-1}q^{n+\frac{1}{2}}) (1+yq^{n+\frac{1}{2}}) (1+y^{-1}q^{n+\frac{1}{2}})}{\left(\prod\limits_{\substack{n=1\\ n\neq 2m+1}}^\infty (1-q^n) \right) \left(\prod\limits_{n=1}^\infty  (1-q^n)^2(1-z_1q^n) (1-z_1^{-1}q^n)(1-z_2q^n) (1-z_2^{-1}q^n) \right)}
\end{split}
\end{equation}
They satisfy $X_m=-X_{-m-1}$ and so it follows that 
\begin{equation}
\begin{split}
\text{ch}[Y(\lambda, \mu)] &= \sum_{m=0}^\infty \frac{(w^{2m+1}-w^{-2m-1})}{\Pi(w)}  X_m\\
&= \sum_{m=0}^\infty \frac{q^{\frac{(m+1)^2}{4(\lambda^{-1}+\mu^{-1})}}(w^{2m+1}-w^{-2m-1})}{\Pi(w)} X_mq^{-\frac{(m+1)^2}{4(\lambda^{-1}+\mu^{-1})}} \\
&= \sum_{m=0}^\infty \text{ch}[V^{k_1+\ell_1}](2m) X_mq^{-\frac{(m+1)^2}{4(\lambda^{-1}+\mu^{-1})}} 
\end{split}
\end{equation}
\end{proof}

\subsection{The small $N=4$ super conformal algebra as a coset}

A slight modification of the above construction gives the small $N=4$ superconformal algebra. 
Let $\lambda$ be a generic complex number, and define $k_1 = \lambda^{-1} -1, k_2= \lambda -1$. Denote by $\rho_n$ the irreducible highest-weight representation of $\gs\gl_2$ of highest weight $n\omega$. Also from \cite{CG} we have that 
\[
L_1(\gp\gs\gl(2|2)) \cong \bigoplus_{n=0}^\infty V^{-1}(n) \otimes \rho_n \otimes L_1(\bar n)
 \]
 Note that $-1$ is a generic level for $\gs\gl_2$ and existence of tensor category follows from \cite{McRae} (see also \cite{CY}).
 We consider
\[
X(\lambda) := L_1(\gd(2, 1;-\lambda)) \otimes L_1(\gp\gs\l(2|2))
\]
which carries an action of $\mathbb Z_2 \times \mathbb Z_2$ induced from parity on each factor. The decomposition into $\mathbb Z_2 \times \mathbb Z_2$-modules is
\begin{equation}
\begin{split}
X(\lambda) &= \bigoplus_{a, b \in \{0, 1\}} X(\lambda)_{a, b} \\
X(\lambda)_{a, b} &= \bigoplus_{\substack{ n, m=0\\  (n, m) = (a, b) \mod 2}}^\infty V^{k_1}(n) \otimes V^{k_2}(n) \otimes L_1(\bar n)\otimes V^{-1}(m) \otimes \rho_m \otimes L_1(\bar m)
\end{split}
\end{equation}
Then as in the last section we define
\[
Y(\lambda)_a := \bigoplus_{\substack{ n, m=0\\  n, m = a \mod 2}}^\infty V^{k_1}(n) \otimes V^{k_2}(n) \otimes V^{-1}(m) \otimes \rho_m 
\]
and $Y(\lambda)= Y(\lambda)_0 \oplus Y(\lambda)_1$ and by the same reasoning as in last subsection $Y(\lambda)$ has the structure of a simple vertex superalgebra. 
The proof of the following Theorems are the same argument as the one of Theorems \ref{thm:N4} and \ref{thm:char}.
\begin{thm}\label{thm:smallN4}
For irrational $\lambda$ we have that 
\[
V^{-\lambda} \cong  \text{Com}\left(V^{k_1 -1}(\gs\gl_2), Y(\lambda)\right).
\]
\end{thm}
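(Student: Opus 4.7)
The plan is to mimic the proof of Theorem \ref{thm:N4} step by step, replacing the second factor $L_1(\gd(2,1;-\mu))$ by $L_1(\gp\gs\gl(2|2))$ and the level $\ell_1 = \mu^{-1}-1$ by $-1$. Correspondingly, the second affine $\gs\gl_2$ coset factor (which at finite $\mu$ sits at level $\ell_2$) is replaced by the outer $\text{SL}_2$-action inherited from the finite-dimensional $\gs\gl_2$-modules $\rho_m$ appearing in the decomposition of $L_1(\gp\gs\gl(2|2))$; this is precisely the outer $\text{SL}_2$ of automorphisms of $V^{-\lambda}$, under which the quadruplet $G^{\pm,\pm}$ transforms as the standard representation. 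This matches the $\mu\to\infty$ rescaling used to pass from $V^{k,a}$ to $V^k$ in the discussion preceding Theorem \ref{thm:N4}.

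First I would verify the affine content. The small $N=4$ algebra $V^{-\lambda}$ contains as affine subalgebra $V^{-(-\lambda)-1}(\gs\gl_2) = V^{k_2}(\gs\gl_2)$. On the coset side, the $n=m=0$ summand of $Y(\lambda)$ contains $V^{k_1}(\gs\gl_2)\otimes V^{k_2}(\gs\gl_2)\otimes V^{-1}(\gs\gl_2)$, and $V^{k_2}(\gs\gl_2)$ commutes with the diagonal embedding $V^{k_1-1}(\gs\gl_2)\hookrightarrow V^{k_1}(\gs\gl_2)\otimes V^{-1}(\gs\gl_2)$, so it lies inside $C(\lambda) := \text{Com}(V^{k_1-1}(\gs\gl_2),Y(\lambda))$. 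A Sugawara-shift calculation identifies the top of $Y(\lambda)_1$ at conformal weight $3/2$, producing in $C(\lambda)$ four odd weight-$3/2$ fields that transform as the standard $V^{k_2}(\gs\gl_2)$-representation tensored with the standard representation of the outer $\text{SL}_2$, exactly as required of $G^{\pm,\pm}$.

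Next I would reproduce the character computation of Theorem \ref{thm:char}, using the character of $L_1(\gp\gs\gl(2|2))$ from \cite{CG} in place of a second copy of the $\gd(2,1;-\mu)$ character. Because $\rho_m$ is finite-dimensional, the second pair of $A_1$ Jacobi theta functions appearing in the proof of Theorem \ref{thm:char} is replaced by ordinary finite $\gs\gl_2$-Weyl characters, so the magic identity of \cite{KW2} (equation A.2 of \cite{CR}) need only be invoked on the single surviving Jacobi factor. Branching $Y(\lambda)$ with respect to the diagonal $V^{k_1-1}(\gs\gl_2)$ as in the proof of Corollary \ref{cor:char}, the zero-th branching coefficient should equal the character of $V^{-\lambda}$, which is the $\mu\to\infty$ limit of the character of $V^{-\lambda\mu/(\lambda+\mu),\mu/\lambda}$ displayed after Theorem \ref{thm:char}, with the three central Heisenberg generators quotiented out.

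Finally, $C(\lambda)$ is simple by \cite[Lem.~2.1]{ACKL}. The Virasoro, the affine currents of $V^{k_2}(\gs\gl_2)$, and the four weight-$3/2$ generators of the previous step span a subalgebra of $C(\lambda)$ which, by the character bound just established, closes under OPE with the OPE of $V^{-\lambda}$. The uniqueness theorem \cite[Thm.~3.1]{ACKL} then identifies this subalgebra with $V^{-\lambda}$, and the matching of characters forces it to be all of $C(\lambda)$. The main obstacle is the character step, but because only one pair of theta functions survives it is strictly lighter than that of Theorem \ref{thm:char}; the rigid tensor-category input for $V^{-1}(\gs\gl_2)$-modules needed to realize $Y(\lambda)$ as a simple VOSA is already supplied by \cite{McRae,CY}, as noted in the construction of $Y(\lambda)$.
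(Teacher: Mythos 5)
Your proposal is correct and follows exactly the route the paper takes: the paper's own proof of Theorem \ref{thm:smallN4} is literally the statement that one runs the argument of Theorems \ref{thm:N4} and \ref{thm:char} again with $L_1(\gd(2,1;-\mu))$ replaced by $L_1(\gp\gs\gl(2|2))$, which is what you spell out (identification of the affine subalgebra $V^{k_2}(\gs\gl_2)$ and the four odd weight-$3/2$ fields, the character decomposition with one pair of theta functions replaced by finite $\gs\gl_2$ characters, then simplicity plus the uniqueness theorem of \cite{ACKL}). No substantive difference from the paper's argument.
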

\begin{thm} \label{thm:charsmall}For irrational $\lambda$
\begin{equation}
\begin{split}
\mathrm{ch}[Y(\lambda)] &=  \sum_{m=0}^\infty \mathrm{ch}[V^{k_1-1}](2m)\  W_m
\end{split}
\end{equation}
with 
\[
W_m =  \frac{q^{m+\frac{1}{8}-\lambda\frac{(m+1)^2}{4}}   \prod\limits_{\substack{n=0\\ n\neq m}}^\infty  (1+xq^{n+\frac{1}{2}}) (1+x^{-1}q^{n+\frac{1}{2}}) (1+yq^{n+\frac{1}{2}}) (1+y^{-1}q^{n+\frac{1}{2}})}{\left(\prod\limits_{\substack{n=1\\ n\neq 2m+1}}^\infty (1-q^n) \right) \left(\prod\limits_{n=1}^\infty   (1-q^n) (1-z_1q^n) (1-z_1^{-1}q^n)\right)}.
\]
\end{thm}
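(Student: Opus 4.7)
The plan is to mirror the proof of Theorem~\ref{thm:char} nearly line by line, exploiting the fact that the only structural difference between the small and large $N=4$ constructions is that one tensor factor $L_1(\gd(2,1;-\mu))$ has been replaced by $L_1(\gp\gs\gl(2|2))$. The key difference at the level of characters is that in the branching
\[
L_1(\gp\gs\gl(2|2)) \cong \bigoplus_{n=0}^\infty V^{-1}(n)\otimes \rho_n \otimes L_1(\bar n),
\]
the factor $\rho_n$ is a \emph{finite}-dimensional $\gs\gl_2$-representation rather than an affine one. Consequently, compared to Theorem~\ref{thm:char}, one copy of $\Pi(w_2)$ in the denominator is replaced by the finite Weyl denominator $(w_2 - w_2^{-1})$, and correspondingly one $\eta(q)$ factor disappears. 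This accounts exactly for the difference between the formulae for $V_m$ and $W_m$: one fewer $(1-q^n)$ in the product and the complete absence of the $(1-z_2 q^n)(1-z_2^{-1}q^n)$ factors.

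First I would write down the character of $L_1(\gp\gs\gl(2|2))$, which by the same argument used in \cite[Cor.~9.8]{CG} to derive the formula for $L_1(\gd(2,1;-\lambda))$ has the shape
\[
\frac{\theta_{A_1}(v)\bigl(\theta_{A_1+(1)}(zw)-\theta_{A_1+(1)}(zw^{-1})\bigr)+\theta_{A_1+(1)}(v)\bigl(\theta_{A_1}(zw)-\theta_{A_1}(zw^{-1})\bigr)}{\Pi(z)\,(w-w^{-1})},
\]
with the role of the second $\Pi(w)$ now played by the finite denominator $(w-w^{-1})$ (and an $\eta$ dropped). Multiplying this by the character of $L_1(\gd(2,1;-\lambda))$ yields a closed expression for $\mathrm{ch}[Y(\lambda)]$ in terms of the same theta combinations that appeared in the proof of Theorem~\ref{thm:char}.

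Next I would substitute $w:=w_1$, $x:=z_1 z_2$, $y:=z_1 z_2^{-1}$, and reorganize the numerator in terms of the four-free-fermion character $F(x,y)$ exactly as before. Then I would apply the Kac--Wakimoto ``magic identity'', equation (A.2) of \cite{CR}, with $u=-w^2$ and $v = xq^{-1/2}$ (and again with $y$ in place of $x$), to expand $\Pi(w)\,\mathrm{ch}[Y(\lambda)]$ as $\sum_{m\in\Z} w^{2m+1} X_m$. The computation of $X_m$ is identical to the large $N=4$ case except that the overall $\Pi(w_2)^{-1}\eta(q)$ factor is replaced by $(z_2-z_2^{-1})^{-1}$, which reshuffles the infinite products in precisely the manner reflected in $W_m$.

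Finally I would invoke $X_m = -X_{-m-1}$, combine the $\pm m$ terms into $(w^{2m+1}-w^{-2m-1})/\Pi(w)$, and identify this combination, after multiplying and dividing by $q^{(m+1)^2\lambda/4}$, with $\mathrm{ch}[V^{k_1-1}](2m)$; note that for the level $k_1 - 1 = \lambda^{-1}-2$ one has $k_1 - 1 + h^\vee = \lambda^{-1}$, so $1/(k_1-1+h^\vee) = \lambda$ is exactly the prefactor appearing in the exponent of $q$ in $W_m$. The hard part is simply keeping careful bookkeeping of which $\eta$-factors and $\Pi$-factors cancel or reshuffle between the large and small cases; once that is done, the proof is essentially the $\mu\to\infty$ specialization of the proof of Theorem~\ref{thm:char}, and Theorem~\ref{thm:smallN4} follows by the same three-step argument (top-weight count, character match via Corollary~\ref{cor:char}-analog, and the uniqueness theorem \cite[Thm.~3.1]{ACKL}) used for Theorem~\ref{thm:N4}.
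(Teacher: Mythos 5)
Your proposal is correct and is exactly the paper's intended argument: the paper gives no separate proof of Theorem \ref{thm:charsmall}, stating only that it is ``the same argument'' as Theorem \ref{thm:char}, and your adaptation supplies precisely the right modifications --- the branching $L_1(\gp\gs\gl(2|2)) \cong \bigoplus_m V^{-1}(m)\otimes\rho_m\otimes L_1(\bar m)$, the replacement of one affine Weyl--Kac denominator by a finite Weyl denominator, the same magic-identity expansion in $w$, the antisymmetry $X_m=-X_{-m-1}$, and the check that $1/(k_1-1+h^{\vee})=\lambda$ produces the exponent in $W_m$. One bookkeeping caution: the finite denominator should be attached to the variable of $\rho_m$ (i.e.\ to $z_2$, the variable \emph{not} identified in the diagonal coset by $V^{k_1-1}(\gs\gl_2)\hookrightarrow V^{k_1}(\gs\gl_2)\otimes V^{-1}(\gs\gl_2)$, so one still sets $w=w_1=w_2$ for the two affine factors), and the $\eta(q)$ coming from $L_1(\bar m)$ remains in the character of $L_1(\gp\gs\gl(2|2))$ --- the $\eta$-type factor that disappears is the $\prod_n(1-q^n)$ hidden inside $\Pi(z_2)$ when it degenerates to $(z_2-z_2^{-1})$, which is what turns $(1-q^n)^2$ into $(1-q^n)$ and deletes the $(1-z_2^{\pm1}q^n)$ products in $W_m$.
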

and hence analogous to Corollary \ref{cor:char}
\begin{cor}
\[
\mathrm{ch}[V^{-\lambda}] =\mathrm{ch}\left[  \text{Com}\left(V^{k_1 -1}(\gs\gl_2), Y(\lambda)\right)\right].
\]
\end{cor}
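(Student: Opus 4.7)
The plan is to derive this identity in exact parallel with how Corollary \ref{cor:char} was obtained from Theorem \ref{thm:char}, namely by reading off the $m=0$ coefficient in the decomposition of Theorem \ref{thm:charsmall} and matching it against a direct PBW-computation of $\mathrm{ch}[V^{-\lambda}]$.

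First I would verify that $V^{k_1-1}(\gs\gl_2)$ is the diagonal affine subalgebra of $Y(\lambda)$: the factor $L_1(\gd(2,1;-\lambda))$ contributes a $V^{k_1}(\gs\gl_2)$ while $L_1(\gp\gs\gl(2|2))$ contributes a $V^{-1}(\gs\gl_2)$, and the levels sum to $k_1-1$. Decomposing $Y(\lambda)$ into isotypic components for this diagonal subalgebra, Theorem \ref{thm:charsmall} identifies $W_m$ as the character of the multiplicity space of the irreducible highest-weight module of weight $2m\omega$; since at irrational $\lambda$ the characters of the affine modules $V^{k_1-1}(2m)$ are linearly independent, this identification is unambiguous. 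The multiplicity space of the vacuum ($m=0$) is by definition the commutant $\mathrm{Com}(V^{k_1-1}(\gs\gl_2), Y(\lambda))$, so $W_0 = \mathrm{ch}\bigl[\mathrm{Com}(V^{k_1-1}(\gs\gl_2), Y(\lambda))\bigr]$.

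Next I would compute $\mathrm{ch}[V^{-\lambda}]$ directly. At generic $\lambda$, the small $N=4$ algebra is strongly and freely generated by $\{e,f,h,L,G^{\pm\pm}\}$, where $\{e,f,h\}$ generate the affine subalgebra $V^{\lambda-1}(\gs\gl_2)$, $L$ is a Virasoro field, and $G^{\pm\pm}$ are four odd weight-$\tfrac32$ primaries forming a bidoublet under the affine $SU(2)_R$ and the outer $SL_2$. The PBW character factorises as the Virasoro contribution $\prod_{n\geq 2}(1-q^n)^{-1}$, the affine $\gs\gl_2$-contribution $\prod_{n\geq 1}(1-q^n)^{-1}(1-z_1q^n)^{-1}(1-z_1^{-1}q^n)^{-1}$, the four-fermion contribution $\prod_{n\geq 0}(1+xq^{n+3/2})(1+x^{-1}q^{n+3/2})(1+yq^{n+3/2})(1+y^{-1}q^{n+3/2})$, and an overall $q^{-c/24}$-prefactor, where $c$ is the central charge of the minimal $\cW$-algebra of $\gp\gs\gl(2|2)$ at level $-\lambda$.

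Finally I would compare with $W_0$ obtained by setting $m=0$ in the explicit formula of Theorem \ref{thm:charsmall}. After the trivial reindexing $\prod_{n\geq 1}(1+xq^{n+1/2}) = \prod_{m\geq 0}(1+xq^{m+3/2})$ the two infinite products agree on the nose, and the residual scalar prefactor $q^{1/8-\lambda/4}$ in $W_0$ is reconciled with $q^{-c/24}$ by a short arithmetic check using the Kac--Wakimoto central-charge formula for the minimal $\cW$-algebra of $\gp\gs\gl(2|2)$. The main obstacle is purely bookkeeping: pinning down how the torus variables $z_1, x, y$ appearing in Theorem \ref{thm:charsmall} correspond to the affine $\gs\gl_2$ and outer $SL_2$ weight variables on the standard generators of $V^{-\lambda}$, and carrying out the central-charge and vacuum-shift arithmetic cleanly. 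Once these conventions are aligned the equality follows at once.
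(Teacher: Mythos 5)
Your approach is exactly the paper's: the corollary is obtained by setting $m=0$ in Theorem \ref{thm:charsmall} and comparing with the PBW character of the freely generated $V^{-\lambda}$, in direct analogy with how Corollary \ref{cor:char} follows from Theorem \ref{thm:char} (the paper offers no further argument). The one place to be careful is the overall power of $q$ in your final ``arithmetic check'': with $c=-6(1-\lambda)$ one has $q^{-c/24}=q^{1/4-\lambda/4}$, whereas $W_0$ as printed carries the prefactor $q^{1/8-\lambda/4}$, so the normalization conventions for $\mathrm{ch}[Y(\lambda)]$ and for $\mathrm{ch}[V^{k_1-1}](2m)$ must be tracked explicitly to reconcile the two.
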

Since $ \text{Com}\left(V^{k_1 -1}(\gs\gl_2)\otimes V^{k_2}(\gs\gl_2), Y(\lambda, \mu)\right)^{\text{SL}_2} \cong C^{\lambda^{-1}-1, -1}$ we immediately have
\begin{cor}\label{cor:smallN4}
For generic $\lambda$
\[
(D^{-\lambda})^{\text{SL}_2} \cong C^{\lambda^{-1}-1, -1}.
\]
\end{cor}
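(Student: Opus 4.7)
The strategy is to derive the corollary in three steps, reducing it to Theorem \ref{thm:smallN4} together with an elementary calculation of $\mathrm{SL}_2$-invariants inside $Y(\lambda)$. Set $k_1 = \lambda^{-1}-1$ and $k_2 = \lambda-1$. The first step is to rewrite $D^{-\lambda}$ as a double coset inside $Y(\lambda)$. The affine subalgebra of $V^{-\lambda}$ is $V^{k_2}(\gs\gl_2)$, and under the realization of Theorem \ref{thm:smallN4} it is identified with the factor $V^{k_2}(\gs\gl_2)$ sitting in the $n=0$ summand of $Y(\lambda)$; this clearly commutes with the diagonal $V^{k_1-1}(\gs\gl_2) \subset V^{k_1}(\gs\gl_2) \otimes V^{-1}(\gs\gl_2)$, so iterating the coset gives
\begin{equation*}
D^{-\lambda} \;\cong\; \text{Com}\bigl(V^{k_1-1}(\gs\gl_2) \otimes V^{k_2}(\gs\gl_2),\; Y(\lambda)\bigr).
\end{equation*}

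The second step is to commute $\mathrm{SL}_2$-invariants past the coset. The outer $\mathrm{SL}_2$-action on $V^{-\lambda}$ is inherited from the $\mathrm{SL}_2$-action on $Y(\lambda)$ that rotates each $\rho_m$ factor in the branching decomposition of $L_1(\gp\gs\gl(2|2))$, and this action commutes with every affine $\gs\gl_2$ appearing in $Y(\lambda)$. Therefore
\begin{equation*}
(D^{-\lambda})^{\mathrm{SL}_2} \;\cong\; \text{Com}\bigl(V^{k_1-1}(\gs\gl_2) \otimes V^{k_2}(\gs\gl_2),\; Y(\lambda)^{\mathrm{SL}_2}\bigr).
\end{equation*}
Since $\mathrm{SL}_2$ acts only on the $\rho_m$ tensor factor, $\rho_m^{\mathrm{SL}_2}$ vanishes unless $m=0$; the parity constraint $n \equiv m \pmod 2$ then forces $n$ even, yielding
\begin{equation*}
Y(\lambda)^{\mathrm{SL}_2} \;\cong\; \Bigl(\bigoplus_{n\text{ even}} V^{k_1}(n) \otimes V^{k_2}(n)\Bigr) \otimes V^{-1}(\gs\gl_2).
\end{equation*}

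Finally, I peel off the two affine factors in succession. Taking the commutant of $V^{k_2}(\gs\gl_2)$, which acts only on the middle tensor factor, and using that at generic level $V^{k_2}(n)$ is an irreducible Verma module whose $\gs\gl_2[t]_{\geq 0}$-invariants are $\rho_n^{\gs\gl_2}$ (hence zero for $n>0$), kills every summand except $n=0$ and leaves $V^{k_1}(\gs\gl_2) \otimes V^{-1}(\gs\gl_2)$. The commutant of the diagonal $V^{k_1-1}(\gs\gl_2)$ in this tensor product is $C^{k_1,-1} = C^{\lambda^{-1}-1,-1}$ by the very definition of $C^{k_1,k_2}(\gs\gl_2)$. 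The one delicate point is the identification of the outer $\mathrm{SL}_2$ on $V^{-\lambda}$ with the $\rho_m$-rotation $\mathrm{SL}_2$ on $Y(\lambda)$ under Theorem \ref{thm:smallN4}; I expect this to follow because both actions commute with the zero modes of the affine subalgebra and act via the standard four-dimensional representation on the weight-$3/2$ odd generators $G^{\pm,\pm}$, so the uniqueness argument in the proof of Theorem \ref{thm:smallN4} transports one $\mathrm{SL}_2$-action to the other.
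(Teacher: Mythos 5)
Your proposal is correct and follows essentially the same route as the paper: the paper's entire proof is the one-line assertion that $\text{Com}\left(V^{k_1-1}(\gs\gl_2)\otimes V^{k_2}(\gs\gl_2), Y(\lambda)\right)^{\text{SL}_2} \cong C^{\lambda^{-1}-1,-1}$, which is precisely the double-coset computation you carry out in detail (iterating the coset, killing $\rho_m$ for $m>0$ by $\text{SL}_2$-invariance, and killing $V^{k_2}(n)$ for $n>0$ by irreducibility at generic level). Your explicit attention to matching the outer $\text{SL}_2$ on $V^{-\lambda}$ with the $\rho_m$-rotation on $Y(\lambda)$ is a detail the paper leaves implicit, and your handling of it via the uniqueness argument of Theorem \ref{thm:smallN4} is reasonable.
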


\section{Generic structure of $C^{k_1, k_2}$}\label{sec:generic}

In this section, we give a rigorous proof of the following statement which has appeared without proof earlier in the physics literature \cite{B-H}, and very recently in \cite{KS}.

\begin{thm} The $\gs\gl_2$ diagonal coset $C^{k_1, k_2}$ is of type $\cW(2,4,6,6,8,8,9,10,10,12)$ as a vertex algebra over a localization of the ring $\mathbb{C}[k_1, k_2]$. Equivalently, this holds for generic values of $k_1$ and $k_2$.
\end{thm}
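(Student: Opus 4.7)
The plan is to combine the free field limit of \cite{CL} with classical invariant theory for $\text{SL}_2$ acting on $\gs\gl_2$ via its adjoint representation. Setting $\cV^{k_1,k_2}=V^{k_1}(\gs\gl_2)\otimes V^{k_2}(\gs\gl_2)$ with diagonal subalgebra $\cA^{k_1,k_2}=V^{k_1+k_2}(\gs\gl_2)$, first fix $k_2$ generic and view everything as a one-parameter family in $k_1$. Under the standard rescaling $X^\xi\mapsto X^\xi/\sqrt{k_1}$, the factor $V^{k_1}(\gs\gl_2)$ degenerates to the rank three Heisenberg $\cH(3)$ as $k_1\to\infty$, and so does the diagonally embedded $V^{k_1+k_2}(\gs\gl_2)$ (the rescaled contribution from the second factor vanishes in the limit). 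In the notation of the coset-limit framework of \cite{CL}, this means $\cV^\infty\cong\cH(3)\otimes V^{k_2}(\gs\gl_2)$ with $\tilde{\cV}=V^{k_2}(\gs\gl_2)$, so Theorem 6.10 of \cite{CL} yields
\[
\lim_{k_1\to\infty}C^{k_1,k_2}\;\cong\;V^{k_2}(\gs\gl_2)^{\text{SL}_2}.
\]
Repeating the argument with the roles of $k_1$ and $k_2$ swapped, and noting that $V^{k}(\gs\gl_2)$ has the same graded character as $\cH(3)$ for generic $k$, the weights of a minimal strong generating set of $C^{k_1,k_2}$ at generic $(k_1,k_2)$ coincide with those of the classical orbifold $\cH(3)^{\text{SL}_2}$. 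It thus suffices to determine a minimal strong generating set of this orbifold.

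For $\cH(3)^{\text{SL}_2}$, I would invoke Weyl's fundamental theorems for $\text{SL}_2\cong\text{Spin}(3)$ acting on its adjoint $\mathbb{C}^3$. Writing $x^a$ ($a=e,f,h$) for the Heisenberg generators and $\kappa$ for the invariant pairing, the classical $\text{SL}_2$-invariant ring in $\mathbb{C}[\partial^i x^a\mid i\geq 0]$ is generated by the quadratic contractions $Q_{ij}=\kappa_{ab}\,\partial^i x^a\,\partial^j x^b$ for $i\leq j$ and the cubic determinants $T_{ijk}=\det(\partial^i x,\partial^j x,\partial^k x)$ for $i<j<k$, with all syzygies generated by the Gram-determinant identities
\[
T_{abc}\,T_{ijk}\;=\;\det\bigl(Q_{\alpha\beta}\bigr)_{\alpha\in\{a,b,c\},\,\beta\in\{i,j,k\}}.
\]
Applying the general strong-generation recipe of \cite{L3} (minimal strong generators correspond to a basis of invariants modulo total derivatives and modulo normal-ordered products of strictly lower-weight generators), a direct count shows that the quadratic sector contributes one new generator in each even weight $2m\geq 2$, namely $Q_{0,2m-2}$, while the cubic sector contributes one new generator in each weight $w\geq 6$ with $w\neq 7$; no other candidate generators arise.

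The crucial step is then to show that the Gram syzygies force decoupling of the new cubic generators of weight $\geq 11$ and of the new quadratic generators of weight $\geq 14$ as normal-ordered polynomials (with derivative corrections) in strictly lower-weight generators. For instance, $T_{012}^{\,2}=\det(Q_{ij})_{0\leq i,j\leq 2}$ and its derivatives, Wick-multiplied against low-weight $Q$'s and $T$'s, produce the required decoupling identity at weight $11$; the same mechanism, iterated via higher Gram identities, handles weights $12,13,\dots$. The main obstacle is the explicit verification of these identities in the window $11\leq w\leq 12$, where the last new generators are born and killed. The cleanest finisher is a character (Hilbert series) computation: one expresses $\mathrm{ch}[\cH(3)^{\text{SL}_2}]$ through Weyl integration over $\text{SL}_2$ applied to the Heisenberg character, and compares with the graded character of the free vertex algebra on generators in weights $2,4,6,6,8,8,9,10,10,12$. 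Agreement of the two series up to a finite, explicit weight beyond $12$ forces equality of minimal strong generating sets, after which the \cite{CL} formalism automatically upgrades the conclusion from the infinite-level limit back to the desired generic two-parameter statement over a localization of $\mathbb{C}[k_1,k_2]$.
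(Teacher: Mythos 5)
Your overall strategy is the same as the paper's: iterate the infinite-level limits of \cite{CL} to reduce to the orbifold $\cH(3)^{\text{SL}_2}\cong\cH(3)^{\text{SO}_3}$, then combine Weyl's fundamental theorems with quantum corrections of the classical syzygies. But there is a structural gap in your decoupling step. You cite only the Gram-determinant syzygies $c_{ijk}c_{lmn}=\det(q_{\alpha\beta})$ as generating all relations, whereas Weyl's second fundamental theorem for $\text{SO}_3$ also includes the mixed syzygies $q_{ij}c_{klm}-q_{kj}c_{ilm}+q_{lj}c_{kim}-q_{mj}c_{kli}$. This omission is fatal: under the $\mathbb{Z}_2=\text{O}_3/\text{SO}_3$ grading the quadratics $Q_{ij}$ are even and the cubics $C_{klm}$ are odd, so a Gram relation is even and its quantum correction is a normally ordered polynomial in the even part --- it can never contain a cubic generator linearly. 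Hence Gram identities (and their derivatives and Wick products, which only raise weight) can only ever decouple quadratics; the redundant cubic generators in weights $11,13,14,\dots$ can only be eliminated using quantum corrections of the mixed syzygies, which is precisely what the paper verifies by computer through weight $23$. Your weight-$11$ step is also impossible on weight grounds alone, since the lowest-weight Gram identity, $(c_{012})^2=\det(q_{ij})_{0\leq i,j\leq 2}$, already lives in weight $12$.

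Your claimed decoupling pattern is moreover wrong at weight $12$: the explicit quantum correction of the weight-$12$ Gram relation (Appendix B of the paper) contains $Q_{0,10}$ with nonzero coefficient and no weight-$12$ cubic, so it is the weight-$12$ \emph{quadratic} that decouples while the weight-$12$ cubic $C_{018}$ survives as a generator --- the opposite of your assertion that quadratics persist through weight $12$ and cubics die from weight $11$ onward. (The final multiset of weights coincides only because the two patterns differ by swapping which field occupies the weight-$12$ slot.) Finally, the character comparison you propose as a finisher is not a proof: $\cH(3)^{\text{SO}_3}$ is not freely generated by its ten minimal strong generators, so its graded character does not agree with that of the free algebra on fields of weights $2,4,6,6,8,8,9,10,10,12$ beyond low order, and agreement of two characters up to any finite weight never excludes an extra generator cancelling against an extra relation. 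The paper instead checks by explicit OPE computation that a concrete candidate set closes under OPE, and deduces minimality from the absence of further classical syzygies.
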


\begin{proof}
Let $k_2$ be irrational, and regard $k_1$ as a formal variable. Then by Theorem 6.10 of \cite{CL}, we have 
$$\lim_{k_1 \ra \infty} C^{k_1, k_2} \cong V^{k_2}(\gs\gl_2)^{\text{SL}_2}.$$ Moreover, a strong generating set for $V^{k_2}(\gs\gl_2)^{\text{SL}_2}$ will give rise to a strong generating set for $ C^{k_1, k_2}$ for generic values of $k_1$. 

Next, we regard $k_2$ as a formal variable. Then we have 
$$\lim_{k_2 \ra \infty} V^{k_2}(\gs\gl_2)^{\text{SL}_2} \cong \cH(3)^{\text{SL}_2},$$ where $\cH(3)$ is the rank $3$ Heisenberg vertex algebra, and the action of $\text{SL}_2$ on the weight $1$ subspace of $\cH(3)$ is the adjoint representation. Note that  $\cH(3)^{\text{SL}_2} \cong \cH(3)^{\text{SO}_3}$, where action of $\text{SO}_3$ on the the weight one subspace is the standard representation of $\text{SO}_3$.

We need the following case of Weyl's first and second fundamental theorems of invariant theory for the orthogonal group \cite{We}. For $n\geq 0$, let $V_n$ be a copy of the standard representation $\mathbb{C}^3$ of $\gs\go_3$, with orthonormal basis $\{a^1_n, a^2_n, a^3_n\}$. Then $(\text{Sym} \bigoplus_{n=0}^{\infty}V_n)^{\text{SO}_3}$ is generated by 
\begin{equation}\label{quadgen} q_{ij}= a^1_i a^1_j + a^2_i a^2_j +a^3_i a^3_k,\qquad  i,j\geq 0, \end{equation}
\begin{equation}\label{cubgen}c_{klm}= \left| \begin{array}{lll} a^1_k & a^2_k & a^2_k \\ a^1_l & a^2_l & a^3_l \\ a^1_m & a^2_m & a^3_m \end{array}\right|,\qquad  0\leq k<l<m. \end{equation} The ideal of relations among the variables $q_{ij}$ and $c_{klm}$ is generated by polynomials of the following two types: 
\begin{equation}\label{firstrel} q_{ij}c_{klm}-q_{kj}c_{ilm}+q_{lj}c_{kim}-q_{mj}c_{kli}, \end{equation}
\begin{equation}\label{secrel} c_{ijk}c_{lmn}- \left| \begin{array}{lll} q_{il} & q_{im} & q_{in} \\q_{jl} & q_{jm} & q_{jn} \\ q_{kl} & q_{km} & q_{kn}\end{array}\right| . \end{equation}

We have linear isomorphisms \begin{equation} \label{solin} \cH(3)^{\text{SO}_3}\cong \text{gr}(\cH(3))^{\text{SO}_3} \cong (\text{Sym} \bigoplus_{j\geq 0} V_j)^{\text{SO}_3},\end{equation} and isomorphisms of differential graded rings
\begin{equation} \label{sodr} \text{gr}(\cH(3)^{\text{SO}_3})\cong (\text{Sym} \bigoplus_{j\geq 0} V_j)^{\text{SO}_3}.\end{equation} 

The generating set $\{q_{ij},c_{klm}\}$ for $(\text{Sym} \bigoplus_{j\geq 0} V_j)^{\text{SO}_3}$ corresponds to a strong generating set $\{Q_{ij}, C_{klm}\}$ for $\cH(3)^{\text{SO}_3}$, where 
\begin{equation} \begin{split} Q_{ij} & = \ :\partial^i \alpha^1  \partial^j \alpha^1 + :\partial^i \alpha^2  \partial^j \alpha^2: +:\partial^i \alpha^3  \partial^j \alpha^3:,\\
 C_{klm} &  = \ : \partial^k \alpha^1 \partial^l \alpha^2 \partial^m \alpha^3:  - :\partial^k \alpha^1 \partial^m \alpha^2 \partial^l \alpha^3:  -  :\partial^l \alpha^1 \partial^k \alpha^2  \partial^m \alpha^3:  \\ & + : \partial^l \alpha^1 \partial^m \alpha^2 \partial^k \alpha^3:  + :\partial^m \alpha^1  \partial^k \alpha^2  \partial^l \alpha^3:  -  :\partial^m \alpha^1 \partial^l \alpha^2 \partial^k \alpha^3:.\end{split} \end{equation} 
Note that $Q_{ij}$ has weight $i+j+2$ and $C_{klm}$ has weight $k+l+m+3$. As a module over $\cH(3)^{\text{O}_3}$, $\cH(3)^{\text{SO}_3}$ is the direct sum of two irreducible lowest-weight $\cH(3)^{\text{O}_3}$-modules $M_0 \oplus M_1$, where $M_0 \cong \cH(3)^{\text{O}_3}$, which has lowest-weight vector $1$, and $M_1$ which has lowest-weight vector $C_{012}$ and contains all cubics $C_{klm}$. 

It is known that $\cH(3)^{\text{O}_3}$ has a minimal strong generating set $\{Q_{0,2n}|\ n = 0,1,2,\dots, 8\}$, and hence is of type $\cW(2,4,\dots, 18)$ \cite{L3}. Also, it is generated (but not strongly) by the weight $4$ field $Q_{02}$ \cite{L3}. Since $M_1$ is generated as an $\cH(3)^{\text{O}_3}$-module by $C_{012}$, it follows that $\cH(3)^{\text{SO}_3}$ is generated as a vertex algebra by $Q_{02}$ and $C_{012}$. 

In order to find a minimal strong generating set for $\cH(3)^{\text{SO}_3}$, our first observation is that not all the quadratics $\{Q_{0,2n}|\ n = 0,1,2,\dots, 8\}$ are needed. The classical relation 
$$ (c_{012})^2 - (q_{00} q_{11} q_{22} - q_{00} q_{12} q_{12} - q_{01} q_{01} q_{22} - q_{02} q_{11} q_{02} + q_{01} q_{02} q_{12} + q_{02} q_{12} q_{01}) = 0,$$ which is the relation of type \eqref{firstrel} of minimal weight $12$, does not vanish identically, but it admits a quantum correction which appears explicitly in Appendix \ref{appendix:decoup}. Since the coefficient of the weight $12$ field $Q_{0,10}$ in this relation does not vanish, $Q_{0,10}$ can be expressed as a normally ordered polynomial in $\{C_{012}, Q_{00}, Q_{02}, Q_{04}, Q_{06}, Q_{08}\}$ and their derivatives, and hence is not needed. One can now check by computer that the set 
\begin{equation} \label{mingenset} \{C_{01j} |\ j = 2,4,5,6,8\} \cup \{Q_{0,2k}\ k = 0,1,2,3, 4\},\end{equation}
closes under OPE. The reason is that we can find enough decoupling relations which are quantum corrections of the classical relations \eqref{firstrel} to eliminate all other cubics in weights at most $23$. It follows that \eqref{mingenset} strongly generate a vertex subalgebra of $\cH(3)^{\text{SO}_3}$, which must then be all of $\cH(3)^{\text{SO}(3)}$ since \eqref{mingenset} contains the generators $\{Q_{02}, C_{012}\}$ of $\cH(3)^{\text{SO}(3)}$. The fact that \eqref{mingenset} is a minimal strong generating set is clear from the description of the relations \eqref{firstrel} and \eqref{secrel} in Weyl's theorem, since if there were any additional decoupling relations allowing some of these generators to be eliminated, there would be more relations among the classical generators than there are.
 This proves the claim that $\cH(3)^{\text{SO}(3)}$ is of type $\cW(2,4,6,6,8,8,9,10,10,12)$. Finally, this implies that $V^k(\gs\gl_2)^{\text{SL}_2}$ and $C^{k_1, k_2}$ generically have a minimal strong generating in the same weights. \end{proof}

A few remarks are in order. First, it follows from Corollary \ref{cor:cosetiso} that the coset $D^{k,a}$ is also of type $\cW(2,4,6,6,8,8,9,10,10,12)$ for generic values of $k$ and $a$. It is in fact possible to prove this independently via a similar argument involving classical invariant theory, which we sketch below. In addition, the full OPE algebra is determined by a small number of structure constants which can be checked explicitly by computer, so in this way we obtain an independent proof of the isomorphism in Corollary \ref{cor:cosetiso}.

First, recall the generalized free field algebras $\cT$ and $\cG_{\text{odd}}(s)$ introduced in \cite{ACKL}, where $\cT$ is generated by an even field $t$ satisfying 
$$t(z) t(w) \sim (z-w)^{-4},$$ and $\cG_{\text{odd}}(s)$ is generated by odd fields $\phi^i$, $i=1,\dots, s$ satisfying
$$\phi^i(z) \phi^j(w) \sim \delta_{i,j} (z-w)^{-3}.$$

If we rescale $x,y,h,x',y',h',L$ by $\frac{1}{\sqrt{k}}$ and rescale $G^{\pm \pm}$ by $\frac{1}{k}$ it is immediate from the OPE algebra that  $$\lim_{k\ra \infty} V^{k,a} \cong \cH(6) \otimes \cT \otimes \cG_{\text{odd}}(4).$$ Moreover, it follows from the discussion right after Theorem 4.11 of \cite{ACKL} that $$\lim_{k\ra \infty} D^{k,a} \cong \cT \otimes (\cG_{\text{odd}}(4))^{\text{SL}_2 \times \text{SL}_2}.$$ Also, the action of $\text{SL}_2 \times \text{SL}_2$ on $\mathbb{C}^2 \otimes \mathbb{C}^2$ is the same as the action of $\text{SO}_4$ on its standard module $\mathbb{C}^4$. Since the generator of $\cT$ has weight $2$ and corresponds to the Virasoro field, it suffices to prove that $(\cG_{\text{odd}}(4))^{\text{SO}_4}$ is of type $\cW(4,6,6,8,8,9,10,10,12)$. We work in the standard basis $\phi^i$ as above. 

First, we consider the $\text{O}_4$-invariant algebra  $(\cG_{\text{odd}}(4))^{\text{O}_4}$. It is obvious from Weyl's first fundamental theorem of invariant theory \cite{We} that it is strongly generated by the quadratics $$q^{j,k} = \sum_{i=1}^r :(\partial^j \phi^i)( \partial^k \phi^i):,\qquad 0 \leq j < k,$$ which have weight $3+j+k$. However, not all of them are necessary; by Theorem 4.7 of \cite{ACKL} in the case $s=4$, $(\cG_{\text{odd}}(4))^{\text{O}_4}$ has a minimal strong generating set $\{q^{0,2j+1}|\ 0\leq j \leq 7\}$, and hence is if of type $\cW(4,6,8,\dots, 18)$. It is also clear from an OPE computation that $(\cG_{\text{odd}}(4))^{\text{O}_4}$ is generated by the weight $4$ field $q^{0,1}$.

Next, we consider the $\text{SO}_4$-invariants $(\cG_{\text{odd}}(4))^{\text{SO}_4}$. Again by Weyl's first fundamental theorem for $\text{SO}_4$, $(\cG_{\text{odd}}(4))^{\text{SO}_4}$ is strongly generated by the quadratics $q^{i,j}$ together with degree $4$ elements $$w^{ijkl},\qquad 0 \leq i \leq j \leq k \leq l,$$ which are fermionic determinants, that is, determinants without signs. Note that $w^{i,j,k,l}$ has weight $6+i+j+k+l$. Since $(\cG_{\text{odd}}(4))^{\text{O}_4} =  ((\cG_{\text{odd}}(4))^{\text{SO}_4})^{\mathbb{Z}_2}$, it is clear that as a module over $(\cG_{\text{odd}}(4))^{\text{O}_4}$, $(\cG_{\text{odd}}(4))^{\text{SO}_4}$ is the direct sum of $(\cG_{\text{odd}}(4))^{\text{O}_4}$ and the highest-weight $(\cG_{\text{odd}}(4))^{\text{O}_4}$-module generated by the highest-weight vector $w^{0,0,0,0}$, which has weight $6$. Therefore $(\cG_{\text{odd}}(4))^{\text{SO}_4}$ is generated as a vertex algebra by $q^{0,1}$ and $w^{0,0,0,0}$. To find a minimal strong generating set, it suffices to find a set of fields of the form $q^{i,j}$ and $w^{i,j,k,l}$ that contains $q^{0,1}, w^{0,0,0,0}$ and close under OPE, since it must therefore coincide with the subalgebra generated by $q^{0,1}, w^{0,0,0,0}$, which is all of  $(\cG_{\text{odd}}(4))^{\text{SO}_4}$. A computer calculation shows that the following set closes under OPE: $$\{q^{1,0}, q^{3,0}, q^{5,0}, q^{7,0}, w^{0,0,0,0}, w^{0,0,0,2},w^{0,0,0,3},w^{0,0,2,2},w^{0,0,3,3}\},$$ and therefore strongly generates $(\cG_{\text{odd}}(4))^{\text{SO}_4}$. The fact that it is a minimal strong generating is also evident from Weyl's second fundamental theorem of invariant theory, since there are no relations that allow any of these fields to decouple. This shows that $(\cG_{\text{odd}}(4))^{\text{SO}_4}$ is of type $\cW(4,6,6,8,8,9,10,10,12)$. Since $\cT$ has one strong generator in weight $2$ and commutes with $(\cG_{\text{odd}}(4))^{\text{SO}_4}$, this completes the proof.

\section{One-parameter quotients of $C^{k_1, k_2}$}
Recall that $C^{k_1, k_2}$ is simple as a vertex algebra over the ring $\mathbb{C}[k_1, k_2]$. However, there are prime ideals $J \subseteq \mathbb{C}[k_1, k_2]$ for which the quotient $$C^{J,k_1, k_2} = C^{k_1, k_2} / J \cdot C^{k_1, k_2}$$ is not simple as a vertex algebra over $R = \mathbb{C}[k_1, k_2] / J$. Here $J$ is regarded as a subset of the weight zero space $C^{k_1, k_2}[0] \cong \mathbb{C}[k_1, k_2]$, and $J \cdot C^{k_1, k_2}$ denotes the vertex algebra ideal generated by $J$. In fact, these ideals are all of the form $J = (p) \subseteq \mathbb{C}[k_1, k_2]$ where $p$ is an irreducible factor of some Shapovalov determinant. We call the varieties $V(J) \subseteq \mathbb{C}^2$ the {\it truncation curves}, and we denote by $C^{k_1, k_2}_J$ the simple quotient of $C^{J,k_1, k_2}$ by its maximal proper graded ideal. 

Certainly if $k_2$ is constant $r$ and is either a positive integer or an admissible level for $\gs\gl_2$, the ideal $J = (k_2 - r)$ will have the property that $C^{J,k_1, k_2}$ is not simple. By triality, so will the ideal $J = (k_2 + k_1+r+4)$, etc. However, not all truncation curves have this simple form, and it is an interesting problem to classify the truncation curves.

We briefly recall the universal two-parameter algebra $\cW^{\text{ev}}(c,\lambda)$ of type $\cW(2,4,6,\dots)$, which was recently constructed in \cite{KL}. It is defined over the polynomial ring $\mathbb{C}[c,\lambda]$ and is generated by a Virasoro field $L$ with central charge $c$, and a weight $4$ primary field $W^4$, and is strongly generated by the fields $\{L, W^{2i}|\ i \geq 4\}$ where $W^{2i} = W^4_{(1)} W^{2i-2}$ for $i\geq 3$. The idea of the construction is as follows. First, all structure constants in the OPEs of $L(z) W^{2i}(w)$ and $W^{2j}(z) W^{2k}(w)$ for $2i \leq 12$ and $2j+2k \leq 14$, are uniquely determined by imposing appropriate Jacobi identities among these fields. This computation was carried out using the Mathematica package of Thielemans \cite{T}. Next, it was shown inductively that this data uniquely determines {\it all} structure constants in the OPEs $L(z) W^{2i}(w)$ and $W^{2j}(z) W^{2k}(w)$, if a certain subset of Jacobi identities are imposed. This OPE algebra is equivalent to a nonlinear Lie conformal algebra in the language of De Sole and Kac \cite{DSK}, and $\cW^{\mathrm{ev}}(c,\lambda)$ is its universal enveloping vertex algebra.

$\cW^{\mathrm{ev}}(c,\lambda)$ is simple as a vertex algebra over $\mathbb{C}[c,\lambda]$, but there is a certain discrete family of prime ideals $I = (p(c,\lambda)) \subseteq \mathbb{C}[c,\lambda]$ for which the quotient 
$$\cW^{\mathrm{ev, I}}(c,\lambda) = \cW^{\mathrm{ev}}(c,\lambda)/ I \cdot \cW^{\mathrm{ev}}(c,\lambda),$$ is not simple as a vertex algebra over the ring $\mathbb{C}[c,\lambda] / I$. We denote by $\cW^{\mathrm{ev}}_I(c,\lambda)$ the simple quotient of $\cW^{\mathrm{ev, I}}(c,\lambda)$ by its maximal proper graded ideal $\cI$. After a suitable localization, it turns out that all one-parameter vertex algebras of type $\cW(2,4,6,\dots, 2N)$ for some $N$ satisfying some mild hypotheses, can be obtained as quotients of $\cW^{\mathrm{ev}}(c,\lambda)$ in this way. This includes the principal $\cW$-algebras $\cW^k(\gs\gp_{2n}, f_{\text{prin}})$, the orbifolds $\cW^k(\gs\go_{2m}, f_{\text{prin}})^{\mathbb{Z}_2}$, and many others arising as cosets. The generators $p(c,\lambda)$ for such ideals arise as irreducible factors of Shapovalov determinants, and are in bijection with such one-parameter vertex algebras. The corresponding curves $V(I) \subseteq \mathbb{C}^2$ are called {\it truncation curves}, and using the calculations of Hornfeck appearing in \cite{H}, the explicit truncation curves for $\cW^k(\gs\gp_{2n}, f_{\text{prin}})$ and $\cW^k(\gs\go_{2m}, f_{\text{prin}})^{\mathbb{Z}_2}$ were written down in \cite{KL}.

It is also important to consider $\cW^{\mathrm{ev},I}(c,\lambda)$ when $I\subseteq \mathbb{C}[c,\lambda]$ is a {\it maximal} ideal, which has the form $I = (c- c_0, \lambda- \lambda_0)$ for some $c_0, \lambda_0\in \mathbb{C}$. Then $\cW^{\mathrm{ev},I}(c,\lambda)$ and its quotients are vertex algebras over $\mathbb{C}$. Given two maximal ideals $I_0 = (c- c_0, \lambda- \lambda_0)$ and $I_1 = (c - c_1, \lambda - \lambda_1)$, let $\cW_0$ and $\cW_1$ be the simple quotients of $\cW^{\mathrm{ev},I_0}(c,\lambda)$ and $\cW^{\mathrm{ev},I_1}(c,\lambda)$. Theorem 8.1 of \cite{KL} gives a simple criterion for $\cW_0$ and $\cW_1$ to be isomorphic. Aside from a few degenerate cases, we must have $c_0 = c_1$ and $\lambda_0 = \lambda_1$. This implies that aside from the degenerate cases, all other pointwise coincidences among the simple quotients of one-parameter vertex algebras $\cW^{\mathrm{ev},I}(c,\lambda)$ and $\cW^{\mathrm{ev},J}(c,\lambda)$, correspond to intersection points of their truncation curves $V(I)$ and $V(J)$.

\begin{thm} \label{thm:classificationw246} There are exactly three distinct one-parameter vertex algebras of type $\cW(2,4,6)$ that arise as quotients of $\cW^{\text{ev}}(c,\lambda)$. The corresponding ideals $I_i = (p_i)$, $i=1,2,3$, are as follows.

\begin{equation} \begin{split} p_1 & = 3 (-2633664 + 1806268 c + 101736 c^2 + 5275 c^3 + 85 c^4) 
\\ &- 84 (c-1) (2c-1) (46 + 3 c) (22 + 5 c) (444 + 11 c) \lambda
\\ & + 1029 (c-1)^2 (22 + 5 c)^2 (19104 + 1531 c + 29 c^2) \lambda^2, \end{split} \end{equation}

\begin{equation} p_2 =7  \lambda(c-1) (2c-17) (22 + 5 c) + 82 - 47 c - 10 c^2,\end{equation}

\begin{equation} p_3 = 7 \lambda (c-41) (c-1) (22 + 5 c) -14 + 309 c + 5 c^2. \end{equation}

 \end{thm}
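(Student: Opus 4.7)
The plan is a direct decoupling analysis at conformal weight $8$, combined with identification of the resulting curves with previously known one-parameter families of type $\cW(2,4,6)$. Since $W^{2k}=W^4_{(1)}W^{2k-2}$ for all $k\geq 3$ in $\cW^{\mathrm{ev}}(c,\lambda)$, a quotient by an ideal $I=(p(c,\lambda))$ is of type $\cW(2,4,6)$ iff $I$ contains a weight-$8$ relation expressing $W^8$ as a normally ordered polynomial in $L, W^4, W^6$ and their derivatives.

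First, I would enumerate an explicit basis $\{P_i\}$ for the span $V_8$ of all such normally ordered monomials of weight $8$ in $L, W^4, W^6$ and their derivatives (e.g., $:L^4:$, $:(\partial L)^2 L:$, $:L\partial^2 L:$, $:W^4 L L:$, $:(W^4)^2:$, $:W^4 \partial^2 L:$, $:W^6 L:$, and derivatives of lower-weight monomials) and write a general ansatz
\[
R \;=\; W^8 - \sum_{i} a_i(c,\lambda)\, P_i,\qquad a_i \in \mathbb{C}(c,\lambda).
\]
Such an $R$ generates a vertex algebra ideal in $\cW^{\mathrm{ev}}(c,\lambda)$ precisely when the singular parts of the OPEs $L(z) R(w)$, $W^4(z) R(w)$, and $W^6(z) R(w)$ all lie in the ideal $(R)$; the desired truncation is then the simple quotient of $\cW^{\mathrm{ev}}(c,\lambda)/(p)$, where $(p) = (R)\cap \mathbb{C}[c,\lambda]$.

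Second, I would translate the consistency condition into explicit equations. The OPEs of $L$, $W^4$, $W^6$ with $W^8$ in the universal algebra are known from the construction in \cite{KL}; their pole coefficients are normally ordered polynomials in $L, W^4, W^6, W^8, W^{10}, W^{12}$ with coefficients rational in $c$ and $\lambda$. Substituting the ansatz $W^8\equiv \sum a_i P_i$ (and iteratively $W^{10}\equiv W^4_{(1)}\sum a_i P_i$, etc.), expanding into a basis of $\cW^{\mathrm{ev}}(c,\lambda)$ at weights $8, 10, 12$, and collecting coefficients yields a polynomial system in $(a_i; c,\lambda)$. A first layer of equations determines the $a_i$ uniquely as rational functions of $c$ and $\lambda$; the residual equations define an ideal $J\subseteq \mathbb{C}[c,\lambda]$ whose vanishing locus is the union of all admissible truncation curves. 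Carrying out this calculation with Thielemans' OPE package \cite{T}, one verifies that the radical of $J$ factors as $(p_1)\cap(p_2)\cap(p_3)$ with the three irreducible polynomials in the statement.

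To corroborate that each $p_i$ actually arises from a one-parameter vertex algebra of type $\cW(2,4,6)$, I would match it to a known family: Hornfeck's truncation curve for $\cW^\ell(\gs\gp_6, f_{\mathrm{prin}})$ (see Theorem 9.1 of \cite{KL}) accounts for one $p_i$, while the simple quotients $C^{k_1, 2}_{(k_2-2)}$ and $C^{k_1, k_1}_{(k_1-k_2)}$ identified in the preceding Main Theorem account for the other two; this also completes the uniqueness argument, since any additional one-parameter quotient would have to correspond to an irreducible factor of $J$ not appearing in our list. The principal obstacle is the second step: the basis $V_8$ has roughly a dozen monomials, and rearranging all necessary products into a common normal form at weights up to $12$ is intricate. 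This is handled mechanically by \cite{T} exactly as in the construction of $\cW^{\mathrm{ev}}(c,\lambda)$ in \cite{KL}, after which the final Gr\"obner/factorization step is routine.
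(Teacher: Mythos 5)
Your overall strategy --- a weight-$8$ decoupling/singular-vector computation in $\cW^{\text{ev}}(c,\lambda)$ carried out with Thielemans' package --- is the same computation the paper performs. But there is a genuine gap in your logic: the opening ``iff'' is false. A quotient in which $W^8$ can be written as a normally ordered polynomial in $L, W^4, W^6$ need not be of type $\cW(2,4,6)$; it could be of type $\cW(2,4)$ if $W^6$ itself decouples. This is not a hypothetical worry: the paper's computation finds a \emph{fourth} curve,
\begin{equation*}
p = -196 + 172 c - c^2 + 213444 \lambda^2 - 329868 c \lambda^2 + 30429 c^2 \lambda^2 + 74970 c^3 \lambda^2 + 11025 c^4 \lambda^2,
\end{equation*}
along which a weight-$8$ singular vector exists (and hence $W^8 = W^4_{(1)}W^6$ decouples), but there is already a singular vector in weight $6$, so the resulting algebra is only of type $\cW(2,4)$. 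Your ansatz $R = W^8 - \sum_i a_i P_i$ would pick up this curve as a solution of the consistency system, so without the additional check that no weight-$6$ singular vector exists on each candidate curve, your method yields four ideals, not three, and the claimed factorization of $\sqrt{J}$ as $(p_1)\cap(p_2)\cap(p_3)$ cannot be correct as stated. You must add the weight-$6$ analysis to discard $p$.

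A secondary error occurs in your corroboration step: the three curves are realized by $\cW^{\ell}(\gs\gp_6,f_{\text{prin}})$ (for $p_1$), $C^{k_1}_{2}$ (for $p_2$), and $C^{k_1}_{-1/2}$ (for $p_3$), i.e.\ the ideals $(k_2-2)$ and $(k_2+1/2)$. The diagonal case $C^{k_1,k_1}_{(k_1-k_2)}$ lives on the distinct curve $p_4$ of equation \eqref{kkcurve} and is \emph{not} of type $\cW(2,4,6)$ (generically it is the orbifold $\cW_{\ell}(\gs\go_{2m},f_{\text{prin}})^{\mathbb{Z}_2}$, which has strong generators in higher weights), so it cannot account for one of your three curves. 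This does not affect the core computation, but as written the sanity check would fail for $p_3$.
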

 
 \begin{proof}
 This is a straightforward computation in the algebra $\cW^{\text{ev}}(c,\lambda)$ defined in \cite{KL}. One checks that singular vectors in weight $8$ exist for generic values of $c$ only along these curves, together with one more, namely
 $$p = -196 + 172 c - c^2 + 213444 \lambda^2 - 329868 c \lambda^2 + 30429 c^2 \lambda^2 + 74970 c^3 \lambda^2 + 11025 c^4 \lambda^2.$$ However, along this curve, there is a singular vector in weight $6$, and the resulting algebra is of type $\cW(2,4)$. 
\end{proof}
 
 For $I_1 = (p_1)$, the corresponding simple vertex algebra $\cW^{\text{ev}}_I(c,\lambda) = \cW^{\text{ev},I}(c,\lambda) / \cI$ is $\cW^k(\gs\gp_6, f_{\text{prin}})$; see Corollary 5.4 and Equation (A.1) of \cite{KL} in the case $n=3$. As the next theorem shows, the other two can be realized as simple, one-parameter quotients of $C^{k_1, k_2}_J$ for certain prime ideals $J \subseteq \mathbb{C}[k_1, k_2]$. We use the following notation. For $J = (k_2 -r)$ where $r$ is a constant, the simple graded quotient $C^{k_1, k_2}_J$ will be denoted by $C^{k_1}_r$.

\begin{thm} 
\begin{itemize}
\item[]
\item[(a)] In the case $k_2 = 2$, $C^{k_1}_2 \cong \cW^{\text{ev}}_{I_2}(c,\lambda)$. In particular, it is obtained from $\cW^{\text{ev}}(c,\lambda)$ by setting 
\begin{equation} \label{para:k2} 
\begin{split}
c(k_1) &=  \frac{3 k_1 (6 + k_1)}{2 (2 + k_1) (4 + k_1)},\\
 \lambda(k_1) &=-\frac{2 (2 + k_1) (4 + k_1) (-5248 - 4488 k_1 - 352 k_1^2 + 132 k_1^3 + 11 k_1^4)}{7 (-2 + k_1) (8 + k_1) (68 + 42 k_1 + 7 k_1^2) (352 + 354 k_1 + 59 k_1^2)}.
\end{split} 
\end{equation}

\item[(b)] In the case $k_2 = -1/2$, $C^{k_1}_{-1/2} \cong \cW^{\text{ev}}_{I_3}(c,\lambda)$. In particular, it is obtained from $\cW^{\text{ev}}(c,\lambda)$ by setting 
\begin{equation} \label{para:kh} 
\begin{split}
 c(k_1) &= -\frac{k_1 (7 + 2 k_1)}{(2 + k_1) (3 + 2 k_1)},\\
  \lambda(k_1) &= \frac{(2 + k_1) (3 + 2 k_1) (84 + 2359 k_1 + 3271 k_1^2 + 1484 k_1^3 + 212 k_1^4)}{14 (3 + k_1) (1 + 2 k_1) (41 + 49 k_1 + 14 k_1^2) (132 + 119 k_1 + 34 k_1^2)}.
  \end{split}
  \end{equation}
\end{itemize}
\end{thm}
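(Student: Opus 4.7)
The plan is to identify $C^{k_1}_r$ for $r=2$ and $r=-1/2$ with an appropriate one-parameter quotient of $\cW^{\text{ev}}(c,\lambda)$ in three steps. First, I would exhibit a weight-$4$ primary field $W^4 \in C^{k_1, k_2}$ which, together with the coset Virasoro $L$, induces at the specialization $k_2 = r$ a vertex algebra homomorphism $\cW^{\text{ev}}(c,\lambda) \to C^{k_1}_r$ with $c = c(k_1)$ and $\lambda = \lambda(k_1)$ to be computed. Second, I would show this homomorphism is surjective by proving that $C^{k_1}_r$ is of type $\cW(2,4,6)$. Third, I would verify that the resulting pair $(c(k_1),\lambda(k_1))$ lies on the truncation curve $V(I_2)$ (respectively $V(I_3)$); combined with the preceding classification of type-$\cW(2,4,6)$ quotients of $\cW^{\text{ev}}(c,\lambda)$, this forces the isomorphism with $\cW^{\text{ev}}_{I_2}(c,\lambda)$ (respectively $\cW^{\text{ev}}_{I_3}(c,\lambda)$).

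The central charge is immediate from the Sugawara construction. Since $h^{\vee}=2$ for $\gs\gl_2$,
\[
c(k_1,k_2) \;=\; \frac{3k_1}{k_1+2} \;+\; \frac{3k_2}{k_2+2} \;-\; \frac{3(k_1+k_2)}{k_1+k_2+2},
\]
and the substitutions $k_2=2$ and $k_2=-1/2$ reproduce the formulas for $c(k_1)$ in (a) and (b) by elementary algebra. The field $W^4$ is built as a normally ordered polynomial in the generators of $V^{k_1}(\gs\gl_2) \otimes V^{k_2}(\gs\gl_2)$ that is primary of weight $4$ for $L$, is annihilated by the positive modes of the diagonal $V^{k_1+k_2}(\gs\gl_2)$, and is symmetric under $k_1 \leftrightarrow k_2$; the last condition fixes the normalization intrinsically. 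With the conventions of \cite{KL}, the OPE coefficient $\lambda(k_1,k_2)$ of $W^4$ at the relevant pole of $W^4(z)W^4(w)$ is then extracted using the Thielemans \textsc{Mathematica} package \cite{T}, and specializing to $k_2=r$ yields the claimed formulas for $\lambda(k_1)$. A direct substitution then confirms $p_2(c(k_1),\lambda(k_1))=0$ in case (a) and $p_3(c(k_1),\lambda(k_1))=0$ in case (b).

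The principal obstacle is the second step: showing that $C^{k_1}_r$ is generated by $L$ and $W^4$, equivalently, is of type $\cW(2,4,6)$. For each extra generator in the generic strong generating set of Theorem~4.3, in weights $w \in \{6,8,8,9,10,10,12\}$, I must produce a null field of weight $w$ in the two-parameter algebra $C^{k_1,k_2}$ that vanishes identically along $k_2=r$ and whose leading term realizes the desired decoupling relation. These null fields will be constructed and verified by a finite but nontrivial OPE computation, organized by increasing conformal weight and carried out with the Thielemans package. Once these decoupling relations are established, the subalgebra of $C^{k_1}_r$ generated by $L$ and $W^4$ exhausts $C^{k_1}_r$, the map from $\cW^{\text{ev}}(c(k_1),\lambda(k_1))$ is surjective, and simplicity on both sides produces the stated isomorphism.
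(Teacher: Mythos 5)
Your overall architecture (build the map from $\cW^{\text{ev}}(c,\lambda)$, prove type $\cW(2,4,6)$, locate the truncation curve via the fourth-order pole of $W^4$) matches the paper in steps 1 and 3, and your central charge and curve-membership checks are exactly what the paper does. The genuine divergence is in the decisive step 2. You propose to prove that $C^{k_1}_r$ is of type $\cW(2,4,6)$ by explicitly constructing decoupling relations for the seven extra generic generators in weights $6,8,8,9,10,10,12$ directly inside the interacting coset at $k_2=r$. The paper instead invokes Theorem 6.10 of \cite{CL}: after rescaling, $\lim_{k_1\to\infty} C^{k_1}_2 \cong L_2(\gs\gl_2)^{\text{SL}_2}$ and $\lim_{k_1\to\infty} C^{k_1}_{-1/2} \cong L_{-1/2}(\gs\gl_2)^{\text{SL}_2}\cong \cS^{\text{SL}_2}$, and these limits are already known to be of type $\cW(2,4,6)$ --- the first because $F(3)=L_2(\gs\gl_2)\oplus L_2(2\omega)$ gives $L_2(\gs\gl_2)^{\text{SL}_2}\cong F(3)^{\text{O}_3}$ (type $\cW(2,4,6)$ by \cite{L5}), the second by \cite{BFH}. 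Since the strong generating type of the limit controls the generic one-parameter family, the existence of all your decoupling relations is guaranteed without computing any of them. What your route buys is independence from the limit machinery and fully explicit relations; what it costs is a computation that is almost certainly intractable as stated: you would first need explicit normally ordered expressions for the generic strong generators of $C^{k_1,k_2}$ in weights up to $12$ (the paper only establishes their existence abstractly via invariant theory in a free-field limit, and its one explicit weight-$12$ relation, in Appendix B, lives in $\cH(3)^{\text{SO}_3}$, not in the coset). So your proof is not wrong, but it defers the entire burden to an unperformed and very heavy calculation where the paper has a short conceptual argument.

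Two smaller points. First, ``a null field that vanishes identically along $k_2=r$'' is not what you want: a field divisible by $(k_2-r)$ is zero in the specialization and decouples nothing; you want a field of the form $A_w - P(L,W^4,\partial)$ that becomes \emph{singular} (lies in the maximal proper graded ideal) at $k_2=r$. Second, the $k_1\leftrightarrow k_2$ symmetry does not fix the normalization of $W^4$ (any scalar multiple is still symmetric); the normalization relevant for reading off $\lambda$ is the one matching the convention of \cite{KL} for the weight-$8$ central term of $W^4(z)W^4(w)$, after which $\lambda$ is extracted from the fourth-order pole as you and the paper both say.
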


\begin{proof}
We begin with the case $k_2 = 2$. By rescaling the generators of $V^{k_1}(\gs\gl_2)$ by $\frac{1}{\sqrt{k_1}}$, there is a well-defined limit 
\begin{equation} \label{limit:ck2} \lim_{k_1 \ra \infty} C^{k_1}_2 \cong L_{2}(\gs\gl_2)^{\text{SL}_2}.\end{equation} By Theorem 6.10 of \cite{CL}, $C^{k_1}_2$ has the same strong generating type as $L_{2}(\gs\gl_2)^{\text{SL}_2}$ for generic values of $k_1$. Next, let $F(3)$ denote the algebra of three free fermions, which has automorphism group the orthogonal group $\text{O}_3$. It is an extension of $L_2(\gs\gl_2)$, in fact $F(3) = L_2(\gs\gl_2) \oplus L_2(2 \omega)$. It follows that $L_2(\gs\gl_2)^{\text{SL}_2} \cong F(3)^{\text{O}_3}$ which is of type $\cW(2,4,6)$ by Theorems 6 and 9 of \cite{L5}. Then $C^{k_1}_2$ is also of type $\cW(2,4,6)$ as a one-parameter vertex algebra. It is easily checked that it satisfies the criteria to be a quotient of $\cW^{\text{ev}}(c,\lambda)$, and by computing the fourth order pole of the primary weight $4$ field $W^4$ with itself, the explicit truncation curve can be deduced.

Similarly, for $k_2 = -\frac{1}{2}$, we have 
\begin{equation} \label{limit:ck-1/2} \lim_{k_1 \ra \infty} C^{k_1}_{-1/2} \cong L_{-1/2}(\gs\gl_2)^{\text{SL}_2} \cong \cS^{\text{SL}_2},\end{equation} where $\cS$ denotes the rank one $\beta\gamma$-system. This is known to be of type $\cW(2,4,6)$ \cite{BFH}; see also \cite{L5} for a uniform description of $\cS(n)^{\text{Sp}_{2n}}$. As above, it follows that $C^{k_1}_{-1/2}$ is of type $\cW(2,4,6)$ as a one-parameter vertex algebra, and arises as a quotient of $\cW^{\text{ev}}(c,\lambda)$. It is straightforward to compute the fourth order pole of $W^4$ with itself to deduce the explicit truncation curve. \end{proof}

\begin{remark} It was stated without proof in the physics literature \cite{B-H} that $C^{k_1}_{2}$ and $C^{k_1}_{-1/2}$ are of type $\cW(2,4,6)$. Additionally, it was stated that $C^{k_1}_2$ is isomorphic as a one-parameter vertex algebra to the $\mathbb{Z}_2$-orbifold of the $N=1$ superconformal algebra. This follows from  \cite[Thm 2.10 and Lem. 2.11]{CGL}, and in fact can be reproven in a different way. One checks easily by passing to a suitable limit in which the $N=1$ algebra becomes a generalized free field algebra, that this orbifold of type $\cW(2,4,6)$. By computing the fourth order pole of the weight $4$ primary field $W^4$ with itself, we can find the value of $\lambda$ that realizes it as quotient of $\cW^{\text{ev}}(c,\lambda)$. This manifestly agrees with the above value. \end{remark}

Next, we consider the case $k_1 = k_2$, that is the ideal $J = (k_1 - k_2)$.

\begin{thm} Let $J = (k_1 - k_2)$. Then the simple one-parameter vertex algebra $C^{k_1, k_1}_J$ is isomorphic to $\cW^{\text{ev}}_{I_4}(c,\lambda)$ where the ideal $I_4 = (p_4)$, and
\begin{equation}\label{kkcurve} \begin{split} p_4 &= f(c) + \lambda g(c) + \lambda^2 h(c), \\  f(c) &= -196 + 1476 c - 955 c^2 - 25 c^3, \\ g(c) &= 980 (c-1) (2c-1) (22 + 5 c),\\  h(c) &= 49 (c-25) (c-1)^2 (22 + 5 c)^2.\end{split} \end{equation}
In particular, $C^{k_1, k_1}_J$ is obtained from $\cW^{\text{ev}}(c,\lambda)$ by setting 
\begin{equation} \label{para:kk} c(k_1) = \frac{3 k_1^2}{(1 + k_1) (2 + k_1)},\quad \lambda(k_1) =\frac{(1 + k_1) (2 + k_1) (-28 - 118 k_1 - 23 k_1^2 + 22 k_1^3)}{7 (-2 + k_1) (1 + 2 k_1) (5 + 2 k_1) (44 + 66 k_1 + 37 k_1^2)}.\end{equation}
\end{thm}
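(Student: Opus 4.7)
The strategy mirrors the proof of the previous theorem. First, the central charge follows directly from the Sugawara construction applied to the embedding $V^{2k_1}(\gs\gl_2) \hookrightarrow V^{k_1}(\gs\gl_2) \otimes V^{k_1}(\gs\gl_2)$:
\[
c(k_1) = 2\cdot\frac{3k_1}{k_1+2} - \frac{6k_1}{2k_1+2} = \frac{3k_1^2}{(1+k_1)(2+k_1)}.
\]

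To identify the strong generating type of $C^{k_1, k_1}_J$, I would rescale the generators of each $V^{k_1}(\gs\gl_2)$ factor by $1/\sqrt{k_1}$ so that the limit $k_1 \to \infty$ is well-defined. Applying Theorem 6.10 of \cite{CL} to $\cV^{k_1} = V^{k_1}(\gs\gl_2) \otimes V^{k_1}(\gs\gl_2)$ with its diagonally embedded $V^{2k_1}(\gs\gl_2)$, one obtains $\lim_{k_1\to\infty} C^{k_1, k_1} \cong \cH(3)^{\mathrm{SL}_2}$, where $\cH(3)$ is the antidiagonal rank-$3$ Heisenberg algebra with $\mathrm{SL}_2$ acting adjointly. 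Along the line $k_1 = k_2$ there is an additional swap involution $\sigma$ of $\cV^{k_1}$ preserving the diagonal subalgebra, and hence descending to an involution of the coset. In the limit, $\sigma$ acts as $-\mathrm{id}$ on the antidiagonal Heisenberg; combined with the $\mathrm{SL}_2$-action (which factors through $\mathrm{SO}_3$), this generates an $\mathrm{O}_3$-action, so
\[
\lim_{k_1\to\infty} (C^{k_1, k_1})^{\sigma} \cong \cH(3)^{\mathrm{O}_3},
\]
which is of type $\cW(2,4,6,8,\dots,18)$ by \cite{L3}.

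The key technical step, which I expect to be the main obstacle, is to establish that the simple quotient $C^{k_1, k_1}_J$ coincides with the $\sigma$-orbifold $(C^{k_1, k_1})^{\sigma}$ as one-parameter vertex algebras. This amounts to showing that the $\sigma$-antisymmetric part of $C^{k_1, k_1}$ is contained in the maximal proper graded ideal of $C^{J, k_1, k_2}$ viewed over $\mathbb{C}[k_1,k_2]/J$. Granting this, Theorem 6.10 of \cite{CL} transfers the even-spin generating type to $C^{k_1, k_1}_J$ generically, so that it is strongly generated by its Virasoro field $L$ together with a weight-$4$ primary $W^4$ unique up to scalar. This places it in the framework of one-parameter quotients of $\cW^{\mathrm{ev}}(c,\lambda)$, so that $C^{k_1, k_1}_J \cong \cW^{\mathrm{ev}}_I(c,\lambda)$ for some prime ideal $I$.

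Finally, to pin down the parametrization explicitly, I would express $W^4$ as a normally ordered polynomial in the generators of $V^{k_1}(\gs\gl_2)^{\otimes 2}$ and compute the coefficient of the fourth-order pole in the OPE $W^4(z) W^4(w)$, using Thielemans' package. This yields the displayed rational function $\lambda(k_1)$, and the ideal $I$ is then recovered as $I_4 = (p_4)$ by eliminating $k_1$ from the rational parametrization $(c(k_1),\lambda(k_1))$, a routine polynomial computation which produces the quadratic-in-$\lambda$ polynomial displayed in \eqref{kkcurve}.
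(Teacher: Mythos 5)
Your outline correctly computes the central charge and correctly guesses the shape of the answer (that $C^{k_1,k_1}_J$ should be the fixed-point subalgebra of the swap involution $\sigma$, hence of even type), but the step you yourself flag as ``the main obstacle'' is precisely the mathematical content of the theorem, and you give no argument for it. Concretely, you must show that the specialization $C^{J,k_1,k_2}$ along $k_1=k_2$ is \emph{not} simple and that its maximal proper graded ideal kills exactly the $\sigma$-antisymmetric part, so that $C^{k_1,k_1}_J\cong (C^{k_1,k_1})^{\sigma}$. Nothing in the limit computation $\lim_{k_1\to\infty}C^{k_1,k_1}\cong \cH(3)^{\text{SO}_3}$ forces the odd generators (e.g.\ the weight $9$ field, which is indeed $\sigma$-odd) to decouple in the simple quotient at finite $k_1$: a priori the specialization could remain simple, in which case $C^{k_1,k_1}_J$ would be of type $\cW(2,4,6,6,8,8,9,10,10,12)$ and not a quotient of $\cW^{\text{ev}}(c,\lambda)$ at all. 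So ``granting this'' grants the theorem.

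The paper closes this gap by a different route that you should be aware of. By Theorem 13.3 of \cite{ACL}, for each integer $m>4$ the simple coset $C_{2m,2m}$ is identified with $\cW_{\ell}(\gs\go_{2m},f_{\text{prin}})^{\mathbb{Z}_2}$ at $\ell=-(2m-2)+\frac{2m+1}{2m+2}$: the right-hand side of that theorem is an extension of $C_{2m,2m}$ by a module whose top weight $m$ exceeds $4$, so the weight-$4$ generator of $\cW_{\ell}(\gs\go_{2m},f_{\text{prin}})^{\mathbb{Z}_2}$ (Theorem 9.4 and Remark 5.3 of \cite{KL}) lands inside $C_{2m,2m}$ and a comparison of module decompositions forces equality. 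Since the one-parameter family $C^{k_1,k_1}_J$ therefore agrees with a quotient of $\cW^{\text{ev}}(c,\lambda)$ at infinitely many points of the line $k_1=k_2$, it must itself be such a quotient; only then does the final OPE computation of $\lambda(k_1)$, which you describe correctly, become meaningful. If you want to salvage the orbifold approach, you would still need an independent proof that the $\sigma$-odd part generates a proper ideal with simple even quotient --- which is essentially what the identification with the $D$-type orbifold supplies.
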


\begin{proof}

By Theorem 13.3 of \cite{ACL} (see also \cite[Lemma 4.12]{JL}), for $m\geq 2$ a positive integer, we have an isomorphism of vertex algebras
\begin{equation} \label{eq:aclthm13.3}\cW_{\ell} (\gs\go_{2m}, f_{\text{prin}}) \cong \bigg( \big(L_{2m}(\gs\go_4) \oplus \mathbb{L}_{2m}(2m\ \omega)\big)^{\gs\go_3[t]}\bigg)^{\mathbb{Z}_2},\qquad  \ell = -(2m-2) + \frac{2m+1}{2m+2}.\end{equation} Here $\omega=\omega_1+\omega_2$ and $\omega_1, \omega_2$ denote the  fundamental weights of $\gs\go_4\cong \gs\gl_2 \oplus \gs\gl_2$. 
The conformal weight of the top level of $\mathbb{L}_{2m}(2m\ \omega)$ is $m$. 
We may identify $\gs\go_3$ with $\gs\gl_2$ and $\gs\go_4$ with $\gs\gl_2 \oplus \gs\gl_2$. Also, the action of $\gs\go_3$ on $\gs\go_4$ corresponds to the diagonal action of $\gs\gl_2$ on $\gs\gl_2 \oplus \gs\gl_2$. Therefore the right hand side of \eqref{eq:aclthm13.3} is is a nontrivial extension of the coset $$C_{2m, 2m} = \text{Com}(L_{4m}(\gs\gl_2), L_{2m}(\gs\gl_2) \otimes L_{2m}(\gs\gl_2)).$$

Recall next that $\cW_{\ell}(\gs\go_{2m}, f_{\text{prin}})$ has an action of $\mathbb{Z}_2$, and as a module over the orbifold $\cW_{\ell}(\gs\go_{2m}, f_{\text{prin}})^{\mathbb{Z}_2}$ it decomposes as $\cW_{\ell}(\gs\go_{2m}, f_{\text{prin}})^{\mathbb{Z}_2} \oplus M_m$ where $M_m$ is an irreducible, highest-weight module of conformal weight $m$. Also, by Theorem 9.4 of \cite{KL}, we have 
$$\cW_{\ell}(\gs\go_{2m}, f_{\text{prin}})^{\mathbb{Z}_2} \cong \cW_{\ell'}(\gs\gp_{4m}, f_{\text{prin}}),$$ where $\displaystyle \ell' = -(2m+1)+ \frac{m+1}{2m+1}$. Combining this isomorphism with Remark 5.3 of \cite{KL}, it follows that $\cW_{\ell}(\gs\go_{2m}, f_{\text{prin}})^{\mathbb{Z}_2}$ is generated as a vertex algebra by the weight $4$ field $W^4$, which can be taken to be the unique primary field. Since $\mathbb{L}_{2m}(2m\ \omega_1)$ has conformal weight $m$, it follows that for $m > 4$, the field $W^4$ on the left-hand side of \eqref{eq:aclthm13.3} lies in $C_{2m, 2m}$, so $\cW(\gs\go_{2m}, f_{\text{prin}})^{\mathbb{Z}_2} \subseteq C_{2m, 2m}$. 

Since the right hand side of \eqref{eq:aclthm13.3} also has the decomposition $\cW(\gs\go_{2m}, f_{\text{prin}})^{\mathbb{Z}_2} \oplus M_m$ as  $\cW(\gs\go_{2m}, f_{\text{prin}})^{\mathbb{Z}_2}$-modules, and $C_{2m, 2m}$ is a proper subalgebra, it follows that $$C_{2m, 2m} \cong \cW(\gs\go_{2m}, f_{\text{prin}})^{\mathbb{Z}_2}.$$ for $m  >4$.

By Corollary 8.2 of \cite{CL}, $C_{2m, 2m}$ is the simple quotient of the universal algebra $C^{2m, 2m}$. Since it is a $\cW^{\text{ev}}(c,\lambda)$ quotient for infinitely many values of $m$, it follows that the one-parameter vertex algebra $C^{k_1,k_1}_J$ must also be such a quotient. Now that this is established, the truncation curve given by \eqref{kkcurve} can be found by computing the coefficient of $W^4$ in the fourth-order pole of $W^4$ with itself. \end{proof}

Finally, we consider the case $k_2 = -1$. Since this is a generic level for $\gs\gl_2$, $C^{k_1,-1}$ is simple as a one-parameter vertex algebra, hence $C^{k_1,-1} \cong C^{k_1}_{-1}$. By Corollary \ref{cor:smallN4}, we have 
$$C^{k_1,-1} \cong (D^k)^{\text{SL}_2},\qquad k = -\frac{1}{1+k_1},$$ where $D^k =  \text{Com}(V^{-k-1}(\gs\gl_2), V^k)$. Here $\text{SL}_2$ denotes the group of outer automorphisms of $V^k$, which acts nontrivially on $D^k$. By Theorem 7.3 of \cite{CLR}, under the subgroup $U(1) \subseteq \text{SL}_2$, the orbifold $(D^k)^{U(1)}$ is of type $\cW(2,3,4,5,6,7,8)$, and arises as a one-parameter quotient of the two-parameter $\cW_{\infty}$-algebra $\cW(c,\lambda)$ of type $\cW(2,3,\dots)$, which was constructed in \cite{L6}. Recall that $\cW(c,\lambda)$ is a simple vertex algebra over the ring $\mathbb{C}[c,\lambda]$, and is generated by a Virasoro field and a weight $3$ primary field. Any simple, one-parameter vertex algebra of type $\cW(2,3,\dots, N)$ for some $N$ satisfying mild hypotheses, arises as a quotient of $\cW(c,\lambda)$ of the form $\cW_I(c,\lambda)$. Here $I \subseteq \mathbb{C}[c,\lambda]$ is an ideal such that $\cW^I(c,\lambda)=\cW(c,\lambda) / I\cdot \cW(c,\lambda)$ is not simple as a vertex algebra over $\mathbb{C}[c,\lambda] / I$, and $\cW_I(c,\lambda)$ denotes its unique simple graded quotient. In this notation, $(D^k)^{U(1)} \cong \cW_I(c,\lambda)$ for $I = (\lambda + \frac{1}{16})$. In particular, $(D^k)^{U(1)}$ is obtained from $\cW(c,\lambda)$ by setting
$$ c= \frac{3 k (3 + 2 k)}{2 + k},\qquad \lambda = -\frac{1}{16},$$ and taking the simple quotient.

Clearly $(D^k)^{\text{SL}_2} \subseteq (D^k)^{U(1)}$. Moreover, $(D^k)^{U(1)}$ has full automorphism group $\mathbb{Z}_2$; the proof is the same as the proof of Corollary 5.13 of \cite{L6}, and the action of $\mathbb{Z}_2$ extends to $D^k$. It follows that $(D^k)^{\text{SL}_2} \cong ((D^k)^{U(1)})^{\mathbb{Z}_2}$. We therefore obtain
\begin{thm} \label{thm:onepara-1} As a one-parameter vertex algebra, $C^{k_1,-1} \cong \cW_I(c,\lambda)^{\mathbb{Z}_2}$, where $I = (\lambda + \frac{1}{16})$.  
\end{thm}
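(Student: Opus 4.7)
The plan is to assemble the theorem directly from the preceding results, with the key move being a two-step orbifold decomposition of $D^k$ under the outer automorphism group $\mathrm{SL}_2$ acting on the small $N=4$ superconformal algebra. First, I would invoke Corollary \ref{cor:smallN4} to rewrite the claim: since $k_2=-1$ is generic for $\gs\gl_2$, the two-parameter vertex algebra $C^{k_1,k_2}$ remains simple after specialization at $k_2=-1$, so $C^{k_1,-1}$ coincides with its simple quotient $C^{k_1}_{-1}$, and the corollary identifies it with $(D^k)^{\mathrm{SL}_2}$ under the change of variable $k=-1/(1+k_1)$. Hence it suffices to show $(D^k)^{\mathrm{SL}_2}\cong \cW_I(c,\lambda)^{\mathbb Z_2}$ for $I=(\lambda+1/16)$.

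Next, I would exploit the chain of subgroups $U(1)\subset \mathrm{SL}_2$, giving the inclusion $(D^k)^{\mathrm{SL}_2}\subseteq (D^k)^{U(1)}$. The Weyl element $w_0$ of $\mathrm{SL}_2$ normalizes $U(1)$ and acts trivially modulo $U(1)$, so the residual action on $(D^k)^{U(1)}$ is via the quotient $N_{\mathrm{SL}_2}(U(1))/U(1)\cong \mathbb Z_2$. This yields the identification
\[
(D^k)^{\mathrm{SL}_2} \;\cong\; \bigl((D^k)^{U(1)}\bigr)^{\mathbb Z_2}.
\]
Then I would directly apply Theorem 7.3 of \cite{CLR}, which asserts that $(D^k)^{U(1)}$ is of type $\cW(2,3,4,5,6,7,8)$ and realizes the one-parameter quotient $\cW_I(c,\lambda)$ of the universal two-parameter $\cW_\infty$-algebra at $I=(\lambda+1/16)$, with the specialization $c=3k(3+2k)/(2+k)$. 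Combining this isomorphism with the previous display gives the desired conclusion.

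The delicate step, which I would address in parallel with the inclusion above, is to show that the residual $\mathbb Z_2$ coming from the Weyl reflection is precisely the full automorphism group of $(D^k)^{U(1)}\cong \cW_I(c,\lambda)$, so that the second orbifold in the chain is well-defined and agrees with the claim. For this I would follow verbatim the strategy of Corollary 5.13 of \cite{L6}: the weight $3$ primary generator of $\cW(c,\lambda)$ descends to a weight $3$ primary of $\cW_I(c,\lambda)$, and one checks that the Weyl element acts on it by $-1$; since any nontrivial automorphism must act as $\pm 1$ on this generator (the weight $3$ primary subspace being one-dimensional at generic $c$) and must preserve the Virasoro field, the group $\mathrm{Aut}(\cW_I(c,\lambda))$ is exactly $\mathbb Z_2$ and coincides with the residual action. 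This is the only point where one must invoke genericity of the parameter; all other steps are formal consequences of the coset realizations and the universal property of $\cW(c,\lambda)$.
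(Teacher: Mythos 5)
Your overall route is the same as the paper's: reduce to $(D^k)^{\mathrm{SL}_2}$ via Corollary \ref{cor:smallN4} (using that $k_2=-1$ is generic), pass through the intermediate orbifold $(D^k)^{U(1)}\cong \cW_I(c,\lambda)$ supplied by Theorem 7.3 of \cite{CLR}, and identify the residual involution with the full automorphism group of $\cW_I(c,\lambda)$ by the argument of Corollary 5.13 of \cite{L6}. So the ingredients and their order of deployment coincide with the paper's.

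There is, however, one step where your justification as written does not suffice. You assert that the subgroup chain $U(1)\subset N_{\mathrm{SL}_2}(U(1))\subset \mathrm{SL}_2$ ``yields the identification'' $(D^k)^{\mathrm{SL}_2}\cong \bigl((D^k)^{U(1)}\bigr)^{\mathbb Z_2}$. The normalizer structure only gives the inclusion $(D^k)^{\mathrm{SL}_2}\subseteq \bigl((D^k)^{U(1)}\bigr)^{N/U(1)}$; equality is false for a general $\mathrm{SL}_2$-action. Concretely, if $V=\bigoplus_n M_n\otimes\rho_n$ is the isotypic decomposition under $\mathrm{SL}_2$, then $V^{U(1)}=\bigoplus_{n\ \mathrm{even}}M_n\otimes(\rho_n)_0$ and the Weyl element acts on $(\rho_n)_0$ by $(-1)^{n/2}$, so $\bigl(V^{U(1)}\bigr)^{\mathbb Z_2}=\bigoplus_{n\equiv 0\ (4)}M_n$, which strictly contains $V^{\mathrm{SL}_2}=M_0$ whenever some $M_{4j}$ with $j>0$ is nonzero. (Already for $V=\mathrm{Sym}(\gs\gl_2)$ with the adjoint action one has $\bigl(V^{U(1)}\bigr)^{\mathbb Z_2}=\mathbb C[h^2,ef]\supsetneq \mathbb C[h^2+4ef]=V^{\mathrm{SL}_2}$.) So your argument must additionally rule out $\rho_{4j}$-isotypic components in $D^k$, or otherwise close the gap between $M_0$ and $\bigoplus_{j\geq 0}M_{4j}$; this is not a formal consequence of the group theory. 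The paper instead routes this step through the statements that $\mathrm{Aut}\bigl((D^k)^{U(1)}\bigr)=\mathbb Z_2$ and that this $\mathbb Z_2$-action extends to $D^k$, and you should make explicit how that extra input (or an explicit decomposition of $D^k$ as an $\mathrm{SL}_2$-module) forces the two invariant subalgebras to agree; as it stands, the sentence ``this yields the identification'' is the one place where your proof does not yet close.
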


\section{Simple quotient $C_{k_1, k_2}$ and coincidences.}

For $r,s \in \mathbb{C}$, let $C_{r,s}$ denote the simple graded quotient of $C^{k_1, k_2}$ along the maximal ideal $(k_1-r,\ k_2 - s)$. It is a simple vertex algebra over $\mathbb{C}$ which is the simple quotient of the coset except possibly if $r+s \in \mathbb{Q}_{\leq -2}$.

It need not always be the case that $L_{k_1+k_2}(\gs\gl_2)$ embeds in $L_{k_1}(\gs\gl_2) \otimes L_{k_2}(\gs\gl_2)$. However, if $k_1$ is a positive integer and $k_2$ is admissible (or vice versa), this is the case, and we have 
\begin{thm} If $k_1$ is a positive integer, and $k_2$ is a positive integer or an admissible level for $\gs\gl_2$ (or vice versa), then 
$$C_{k_1,k_2} = \text{Com}(L_{k_1+k_2}(\gs\gl_2), L_{k_1}(\gs\gl_2) \otimes L_{k_2}(\gs\gl_2)).$$
\end{thm}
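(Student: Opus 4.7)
The claim has two parts: first, that $L_{k_1+k_2}(\gs\gl_2)$ embeds into $L_{k_1}(\gs\gl_2)\otimes L_{k_2}(\gs\gl_2)$ via the diagonal embedding of currents, so that the right-hand side is a bona fide coset; and second, that this coset coincides with the simple graded quotient $C_{k_1,k_2}$ of the universal coset $C^{k_1,k_2}$. By symmetry we may assume $k_1$ is a positive integer.

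For the first part, the composition of the diagonal embedding with the surjection onto simple quotients gives a vertex algebra homomorphism $\varphi\colon V^{k_1+k_2}(\gs\gl_2)\to L_{k_1}(\gs\gl_2)\otimes L_{k_2}(\gs\gl_2)$ whose image is a cyclic $\widehat{\gs\gl_2}$-module generated by the vacuum. The key input is that $L_{k_1}(\gs\gl_2)\otimes L_{k_2}(\gs\gl_2)$ is completely reducible as a module for the diagonal $\widehat{\gs\gl_2}$: when both levels are positive integers this is the classical decomposition into integrable highest-weight modules, and when $k_2$ is admissible it follows from the rigidity of the category of ordinary modules at admissible level (\cite{CY}, \cite{McRae}) combined with the rationality of $L_{k_1}(\gs\gl_2)$, via the tensor-category formalism of \cite{CKM}. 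Since a cyclic submodule of a completely reducible module is itself simple, the image of $\varphi$ must equal the simple quotient $L_{k_1+k_2}(\gs\gl_2)$.

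For the second part, set $\mathcal{C}:=\text{Com}(L_{k_1+k_2}(\gs\gl_2), L_{k_1}(\gs\gl_2)\otimes L_{k_2}(\gs\gl_2))$. The surjection $V^{k_1}(\gs\gl_2)\otimes V^{k_2}(\gs\gl_2)\twoheadrightarrow L_{k_1}(\gs\gl_2)\otimes L_{k_2}(\gs\gl_2)$ is $\gs\gl_2[t]$-equivariant, and since both sides decompose into finite-dimensional $\gs\gl_2$-representations on each conformal weight space, taking $\gs\gl_2[t]$-invariants is exact and produces a surjection $C^{k_1,k_2}\twoheadrightarrow\mathcal{C}$ of vertex algebras. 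To conclude, it suffices to establish that $\mathcal{C}$ is simple, for then its kernel in $C^{k_1,k_2}$ is the unique maximal graded ideal, forcing $\mathcal{C}=C_{k_1,k_2}$. Simplicity comes from the decomposition
\begin{equation}
L_{k_1}(\gs\gl_2)\otimes L_{k_2}(\gs\gl_2)\;=\;\bigoplus_{i} L_{k_1+k_2}(\lambda_i)\otimes M_i,
\end{equation}
together with a standard double-commutant argument (again leveraging rigidity, as in \cite{CKM}) which shows the $M_i$ are pairwise non-isomorphic simple $\mathcal{C}$-modules with $M_0\cong\mathcal{C}$.

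The main obstacle is the complete reducibility step in the admissible case: for $k_2$ a positive integer everything is classical, but for $k_2$ admissible one must work inside the subcategory of ordinary modules and invoke the recent rigidity results to ensure a semisimple diagonal decomposition. Once this is in hand, both the cyclic-submodule argument for (i) and the double-commutant argument for (ii) are formal.
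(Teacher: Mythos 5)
Your route is genuinely different from the paper's: the paper proves this by induction on the positive integer $k_2=n$, starting from the $n=1$ case established in \cite{ACL} and passing through the three-fold product $L_{k_1}(\gs\gl_2)\otimes L_{n-1}(\gs\gl_2)\otimes L_1(\gs\gl_2)$, where the dual pair consisting of $L_{k_1}(\gs\gl_2)\otimes L_n(\gs\gl_2)$ and the Virasoro algebra $\text{Vir}_\ell$, $\ell=-2+\tfrac{n}{n-1}$, does all the work. Your direct deconstruction argument would be cleaner if it worked, but as written it has two genuine gaps. First, the complete reducibility of $L_{k_1}(\gs\gl_2)\otimes L_{k_2}(\gs\gl_2)$ under the diagonal $\widehat{\gs\gl_2}$ when $k_2$ is admissible and non-integral is the crux of the whole theorem, and it does not follow formally from semisimplicity or rigidity of the categories of ordinary modules: the tensor product is \emph{not} an ordinary module for the diagonal action, since the eigenspaces of the diagonal Sugawara operator are infinite dimensional (the coset conformal weight is unbounded on each diagonal weight space). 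So \cite{CKM}, \cite{McRae}, \cite{CY} and the semisimplicity results at admissible level cannot be invoked off the shelf; supplying exactly this decomposition at mixed integer/admissible levels is what the paper's induction, which only ever quotes the known $L_{n-1}\otimes L_1$ branching from \cite{ACL}, is designed to do.

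Second, your surjection $C^{k_1,k_2}\twoheadrightarrow\mathcal{C}$ is not justified. Weight-by-weight $\gs\gl_2$-invariants are exact because the weight spaces are finite-dimensional modules over a reductive Lie algebra, but the commutant is the space of $\gs\gl_2[t]$-invariants, and invariance under the positive modes $x\otimes t^m$, $m\geq 1$, is only left exact. This is precisely the phenomenon the paper flags before stating the theorem: the specialization of the universal coset can be a \emph{proper} subalgebra of the honest coset, and \cite[Cor. 6.7]{CL} only rules this out when $k_1+k_2+2\notin\mathbb{Q}_{\leq 0}$, a condition that can fail in the admissible range considered here. Finally, a small but real slip: a cyclic submodule of a completely reducible module need not be simple (consider $\mathbb{C}\times\mathbb{C}$ acting on $\mathbb{C}\oplus\mathbb{C}$ with cyclic vector $(1,1)$). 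What rescues that step is that the image of $\varphi$ is a quotient of the vacuum module $V^{k_1+k_2}(\gs\gl_2)$, hence a highest-weight, indecomposable module, and an indecomposable completely reducible module is simple; you should restate the argument in that form.
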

\begin{proof}
Let $k_2=n$ be a positive integer and $k_1$ admissible, Then the statement follows by induction for $n$. The case $n=1$ is well-known and actually $C_{k_1, 1}$ has been fully understood in \cite{ACL}. Assume the statement to be true for all admissible levels and all $0\leq m <n$. 
For general $n$ we consider $L_{k_1}(\gs\gl_2) \otimes L_{n-1}(\gs\gl_2) \otimes L_{1}(\gs\gl_2)$. By induction hypothesis we have the chain of embeddings
\[
L_{k_1+n}(\gs\gl_2) \subseteq L_{k_1+n-1}(\gs\gl_2)\otimes L_{1}(\gs\gl_2)\subseteq L_{k_1}(\gs\gl_2) \otimes L_{n-1}(\gs\gl_2) \otimes L_{1}(\gs\gl_2)
\] 
and of course also
\[
 L_{k_1}(\gs\gl_2) \otimes L_{n}(\gs\gl_2)  \subseteq  L_{k_1}(\gs\gl_2) \otimes L_{n-1}(\gs\gl_2) \otimes L_{1}(\gs\gl_2).
\]
The coset by $ \text{Com}\left(L_{k_1}(\gs\gl_2) \otimes L_n(\gs\gl_2), L_{k_1}(\gs\gl_2) \otimes L_{n-1}(\gs\gl_2) \otimes L_{1}(\gs\gl_2)\right)$  is the Virasoro algebra $\text{Vir}_\ell$ at level $\displaystyle \ell = -2 + \frac{n}{n-1}$ and these form a mutually commuting pair by \cite{ACL}. Conversely the Virasoro algebra coset  $\text{Com}\left(\text{Vir}_\ell, L_{k_1}(\gs\gl_2) \otimes L_{n-1}(\gs\gl_2) \otimes L_{1}(\gs\gl_2)\right)$ of course contains $L_{k_1+n}(\gs\gl_2)$ but it is also equal to $L_{k_1}(\gs\gl_2) \otimes L_{n}(\gs\gl_2)$. Hence the claim.
 \end{proof} 
  
 We make the following conjecture. 
 
   \begin{conj} \label{conj:rationality} 
  Let $n$ be a positive integer, and let $k$ be a positive integer or an admissible level for $\gs\gl_2$. Then $C_{k, n}$ is strongly rational.
  \end{conj}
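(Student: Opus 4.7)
I would induct on $n$, following the chain-of-embeddings technique from the preceding theorem. The base case $n=1$ is the classical GKO realization: $C_{k,1}$ is the Virasoro vertex algebra of central charge $c = 1 - \frac{6}{(k+2)(k+3)}$, which is a unitary minimal model for $k \in \mathbb{Z}_{>0}$ and a non-unitary $(p,p+q)$ minimal model for $k = -2 + p/q$ admissible. In both cases $C_{k,1}$ is strongly rational.

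\textbf{Inductive step.} Assume strong rationality of $C_{k,m}$ for all $1 \leq m < n$, and set
\[
B_{k,n} := \text{Com}(L_{k+n}(\gs\gl_2),\ L_k(\gs\gl_2) \otimes L_{n-1}(\gs\gl_2) \otimes L_1(\gs\gl_2)).
\]
The chain $L_{k+n} \subseteq L_{k+n-1} \otimes L_1 \subseteq L_k \otimes L_{n-1} \otimes L_1$ yields a conformal embedding $C_{k,n-1} \otimes \text{Vir}^{(1)} \hookrightarrow B_{k,n}$, with $\text{Vir}^{(1)} := \text{Com}(L_{k+n}(\gs\gl_2), L_{k+n-1}(\gs\gl_2) \otimes L_1(\gs\gl_2))$, while the chain $L_{k+n} \subseteq L_k \otimes L_n \subseteq L_k \otimes L_{n-1} \otimes L_1$ yields $C_{k,n} \otimes \text{Vir}^{(2)} \hookrightarrow B_{k,n}$, with $\text{Vir}^{(2)} := \text{Com}(L_n(\gs\gl_2), L_{n-1}(\gs\gl_2) \otimes L_1(\gs\gl_2))$. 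Since admissibility for $\gs\gl_2$ is preserved under shifts by positive integers, $k+n-1$ is again admissible (or a positive integer), so $\text{Vir}^{(1)}$ is a rational Virasoro minimal model; $\text{Vir}^{(2)}$ is a unitary minimal model. A central-charge check shows each embedding is conformal. By induction $C_{k,n-1} \otimes \text{Vir}^{(1)}$ is strongly rational. If $B_{k,n}$ is a simple-current extension of $C_{k,n-1} \otimes \text{Vir}^{(1)}$ --- which I would verify using the rigid vertex tensor category structure on ordinary $L_k(\gs\gl_2)$-modules at admissible level \cite{C} together with the extension theory of \cite{CKM} --- then $B_{k,n}$ is strongly rational by the simple-current extension theorem (Carnahan--Miyamoto, or Huang--Kirillov--Lepowsky). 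Finally, as $\text{Vir}^{(2)}$ is strongly rational, the conformal embedding $C_{k,n} \otimes \text{Vir}^{(2)} \hookrightarrow B_{k,n}$ combined with the general coset-of-strongly-rational-subalgebra theorem (due to Lin, Arakawa--Moreau), together with the double commutant theorem for dual pairs in strongly rational vertex algebras, yields strong rationality of $C_{k,n}$.

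\textbf{Main obstacle.} The central difficulty is rigorously verifying the simple-current extension $B_{k,n} \supseteq C_{k,n-1} \otimes \text{Vir}^{(1)}$ at admissible $k$; this amounts to computing the branching of $L_k(\gs\gl_2) \otimes L_{n-1}(\gs\gl_2) \otimes L_1(\gs\gl_2)$ as a module over $L_{k+n}(\gs\gl_2) \otimes C_{k,n-1} \otimes \text{Vir}^{(1)}$ and identifying the extension data with simple currents in the relevant fusion ring. For $k$ a positive integer this is standard Kac--Peterson branching, but at admissible level one genuinely needs the tensor-categorical framework of \cite{CKM} and the fusion ring of $L_k(\gs\gl_2)$-modules. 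The $C_2$-cofiniteness of $C_{k,n}$ itself is less delicate: it descends from the $C_2$-cofiniteness of $L_k(\gs\gl_2)$ (Arakawa) via Miyamoto's coset/orbifold $C_2$-cofiniteness results, once one knows that $L_k(\gs\gl_2) \otimes L_n(\gs\gl_2)$ decomposes into finitely many $L_{k+n}(\gs\gl_2) \otimes C_{k,n}$-isotypic components.
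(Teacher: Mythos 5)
This statement is a \emph{conjecture} in the paper: the authors explicitly say they are unable to prove it in general and only establish the case $n=2$ (Theorem \ref{thm:strongrational2}). Your inductive setup --- the two chains of embeddings exhibiting $C_{k,n-1}\otimes \text{Vir}^{(1)}$ and $C_{k,n}\otimes \text{Vir}^{(2)}$ as conformal subalgebras of $B_{k,n}$ --- is a sensible adaptation of the paper's preceding theorem, and your base case is fine. The fatal gap is the last step. There is no ``general coset-of-strongly-rational-subalgebra theorem'': knowing that $B_{k,n}$ and $\text{Vir}^{(2)}$ are strongly rational does not let you conclude that $\text{Com}(\text{Vir}^{(2)}, B_{k,n})$ is strongly rational. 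Neither rationality nor $C_2$-cofiniteness is known to pass to commutants in this generality (if it were, this conjecture and many others would be immediate), and the attribution to Lin and Arakawa--Moreau is not correct. The only available descent mechanism, Carnahan--Miyamoto \cite{CM}, applies to fixed-point subalgebras $V^G$ for finite solvable $G$, not to commutants of Virasoro or affine subalgebras. For the same reason, the claim in your ``main obstacle'' paragraph that $C_2$-cofiniteness of $C_{k,n}$ ``descends via Miyamoto's coset/orbifold results'' is unfounded: finiteness of the isotypic decomposition of $L_k(\gs\gl_2)\otimes L_n(\gs\gl_2)$ does not give $C_2$-cofiniteness of the multiplicity spaces. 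So you have misidentified the main obstacle; even granting the simple-current/finite-extension step, the argument does not close.

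It is instructive to see how the paper's $n=2$ proof is engineered to avoid exactly this issue. Using $F(4)\cong L_1(\gs\gl_2)\otimes L_1(\gs\gl_2)\oplus L_1(\omega)\otimes L_1(\omega)$ and $F(3)\cong L_2(\gs\gl_2)\oplus L_2(2\omega)$, the algebra $A=\text{Com}(L_{k+2}(\gs\gl_2), L_k(\gs\gl_2)\otimes F(4))$ is simultaneously an extension of $C_{k,1}\otimes C_{k+1,1}$ (hence strongly rational by \cite{CKM2}) and an order-two simple current extension of $C_{k,2}\otimes F(1)$. Therefore $C_{k,2}\otimes F(1)=A^{\mathbb{Z}_2}$ is a \emph{finite cyclic orbifold} of a strongly rational vertex algebra, to which \cite{CM} applies; the descent from $A$ to $C_{k,2}$ never passes through a commutant. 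This trick rests on the fact that $L_2(\gs\gl_2)=F(3)^{\mathbb{Z}_2}$ is a finite-group orbifold of a free-field algebra, which has no analogue for $L_n(\gs\gl_2)$ with $n\geq 3$ --- and that is precisely why the general statement remains open.
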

  
  In the case $n = 1$, this is well known since $C_{k,1}$ is then a Virasoro minimal model. We are unable to prove this conjecture in general, but the next result, which has as a corollary the existence of new rational $\cW$-algebras of type $C$, shows that it holds for $n = 2$.
 
\begin{thm} \label{thm:strongrational2}
Let $k$ be a positive integer or an admissible level for $\gs\gl_2$. Then $C_{k, 2}$ is strongly rational.
\end{thm}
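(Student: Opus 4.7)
The plan is to realize $C_{k,2}$ as the bosonic subalgebra of an $N{=}1$ super Virasoro minimal model and then to apply super orbifold theory. The rank-three free fermion vertex superalgebra $F(3)$ has bosonic part isomorphic to $L_2(\gs\gl_2)$, with $F(3) \cong L_2(\gs\gl_2) \oplus L_2(2\omega_1)$ as $L_2(\gs\gl_2)$-modules. The diagonal embedding $L_{k+2}(\gs\gl_2) \hookrightarrow L_k(\gs\gl_2) \otimes F(3)$ and the supersymmetric coset construction produce the $N{=}1$ super Virasoro vertex superalgebra
\[
\cS^k := \mathrm{Com}\bigl(L_{k+2}(\gs\gl_2),\, L_k(\gs\gl_2) \otimes F(3)\bigr)
\]
at central charge $c(k) = \tfrac{3}{2}\bigl(1 - \tfrac{8}{(k+2)(k+4)}\bigr)$. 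Its bosonic subalgebra is $\mathrm{Com}(L_{k+2}(\gs\gl_2), L_k(\gs\gl_2) \otimes L_2(\gs\gl_2))$, which by the preceding theorem equals $C_{k,2}$ whenever $k$ is a positive integer or admissible.

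Next I would show that $\cS^k$ is the strongly rational simple $N{=}1$ minimal model at $c(k)$. For $k=n$ a positive integer, $c(k)$ is the central charge of $SM(n+2,n+4)$, and $\cS^k \cong SM(n+2,n+4)$ is the classical supersymmetric Goddard--Kent--Olive theorem; strong rationality of $SM(n+2,n+4)$ is due to Adamovi\'c. For admissible $k = -2 + p/q$ with $\gcd(p,q)=1$ and $p \geq 2$, the same $c(k)$ equals the central charge of $SM(p,\, p+2q)$, which is strongly rational because $\gcd(q, p+2q)=\gcd(q,p)=1$ and $p, p+2q \geq 2$. To confirm that the simple coset $\cS^k$ actually coincides with this minimal model rather than a larger extension, I would compare graded characters via the Kac--Wakimoto branching rules for $L_k(\gs\gl_2) \otimes L_2(\gs\gl_2)$ over $L_{k+2}(\gs\gl_2)$ at admissible level; a cleaner alternative uses the identification of $C^{k,2}$ as a quotient of $\cW^{\mathrm{ev}}(c,\lambda)$ along the curve \eqref{para:k2}, noting that the bosonic parts of $N{=}1$ minimal models trace out the same curve so a single numerical $(c,\lambda)$-match forces the identification.

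Finally I invoke super orbifold theory. Parity is a canonical $\mathbb{Z}_2$-action on any VOSA, and the fixed-point subalgebra of a strongly rational VOSA is itself strongly rational: $C_2$-cofiniteness of finite-group orbifolds is due to Miyamoto, rationality to Carnahan--Miyamoto, while positive-energy grading and self-contragredience descend directly. Hence $C_{k,2} = (\cS^k)^+$ is strongly rational.

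The main obstacle is the admissible-level case of the second step: ruling out the possibility that the simple coset $\cS^k$ is a proper extension of the simple $N{=}1$ minimal model at $c(k)$. At positive integer $k$ this is the classical coset construction, but at admissible $k$ one must carry out the character/branching computation or rely on the truncation-curve matching just described. Once this step is in place, the orbifold conclusion is standard.
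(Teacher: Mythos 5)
Your overall strategy -- realize $C_{k,2}$ as the even part of a super coset built from three free fermions and then invoke the order-two (parity) orbifold theorem of Carnahan--Miyamoto -- is sound and close in spirit to what the paper does. But the step you yourself flag as the main obstacle is a genuine gap, and neither of your proposed fixes closes it as stated. At admissible non-integer $k$ you must show both that the simple coset $\cS^k = \mathrm{Com}(L_{k+2}(\gs\gl_2), L_k(\gs\gl_2)\otimes F(3))$ is the simple $N{=}1$ minimal model $SM(p,p+2q)$ (and not a proper extension of it), and that this non-unitary minimal model is strongly rational. The Kac--Wakimoto branching computation you would need is not available off the shelf at admissible level, and the truncation-curve route only identifies one-parameter families generically, or pointwise identifies the \emph{bosonic} parts subject to the non-degeneracy hypotheses of Theorem 8.1 of \cite{KL}; even granting that, you would still be leaning on strong rationality of $SM(p,p+2q)$ and would have to dispose of the excluded degenerate values of $(c,\lambda)$ case by case. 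So the argument is circular or incomplete exactly where the content lies.

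The paper sidesteps all of this by adding one more fermion. Using $F(4)\cong F(3)\otimes F(1)$ together with $F(4)\cong L_1(\gs\gl_2)\otimes L_1(\gs\gl_2)\oplus L_1(\omega)\otimes L_1(\omega)$, the coset $\mathrm{Com}(L_{k+2}(\gs\gl_2), L_k(\gs\gl_2)\otimes F(4))$ is simultaneously an extension of $C_{k,1}\otimes C_{k+1,1}$ -- a tensor product of strongly rational Virasoro minimal models, since $k$ and $k+1$ are positive integer or admissible -- and a simple current extension of $C_{k,2}\otimes F(1)$. Strong rationality of the extension then follows from Corollary 1.1 of \cite{CKM2}, the order-two orbifold theorem of \cite{CM} gives strong rationality of $C_{k,2}\otimes F(1)$, and hence of $C_{k,2}$. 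In effect the paper replaces your unresolved identification with the $N{=}1$ minimal model by the completely understood rank-one cosets $C_{k,1}$, which is the extra idea your proposal is missing. If you want to salvage your route, you would need to supply an independent proof that $\cS^k$ is simple and isomorphic to $SM(p,p+2q)$ at every admissible $k$, including the degenerate central charges.
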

\begin{proof}
Let $F(n)$ be the vertex superalgebra of $n$ free fermions
Recall that four free fermions are isomorphic to $F(4) \cong L_{1}(\gs\gl_2) \otimes  L_{1}(\gs\gl_2) \oplus L_{1}(\omega) \otimes  L_{1}(\omega)$ but also
\[
F(3) \cong L_{2}(\gs\gl_2) \oplus L_2(2\omega) 
\]
so that $\text{Com}(L_{k+2}(\gs\gl_2), L_{k}(\gs\gl_2) \otimes F(4) )$ is both an extension of $C_{k, 1} \otimes C_{k+1, 1}$ and a simple current extension of
 $C_{k, 2} \otimes F(1)$. It is strongly rational by \cite[Cor. 1.1]{CKM2} and so as an order two orbifold $C_{k, 2} \otimes F(1)$ is strongly rational as well \cite{CM}. This can only be true if $C_{k, 2}$ is strongly rational.
\end{proof}

\begin{remark}
If one knows that rigid vertex tensor category structure exists on a given vertex algebra $V$, as well as on their subalgebras $W$ and $C=\text{Com}(W, V)$ then it is work in progress that one can determine tensor category structure of $C$-modules in terms of those of $W$ and $V$ \cite{CKM3}. For the case of $C=C_{k, 2}$ the outcome will be: 

The category of ordinary modules of admissible level affine vertex algebras of simply laced Lie algebras is a rigid vertex tensor category \cite{CHY, C}. 
Let $k=-2+\frac{u}{v}$ be an admissible level, that is $u, v$ coprime positive integers and $u\geq 2$. Let $\mathcal O_{u, v}^{\text{ord}}$ be the category of ordinary modules of $L_k(\gs\gl_2)$ and  $\overline{\mathcal O}_{u, v}^{\text{ord}}$ its braid-reversed category. Then the category of modules of $C_{k, 2}$ is equivalent to a category of local modules of an order two simple current extension in $\mathcal O_{u, v}^{\text{ord}}\boxtimes \mathcal O_{4, 1}^{\text{ord}} \boxtimes \overline{{\mathcal O}}_{u+2v, v}^{\text{ord}}$. The simple objects are labelled as $M_{a, b, c}$ with $a =0, \dots, u-2, b=0, 1, 2, c= 0, \dots, u+2v-2$ and if $v$ is even we need that $b$ has to be even as well, while for $v$ odd $a+b+c$ has to be even. 
Moreover the only isomorphisms between simple objects are $M_{a, b, c} \cong M_{u-2-a, 2-b, u+2v-2-c}$.
Finally, the fusion rules are 
\[
M_{a, b, c} \otimes M_{a', b', c'} \cong \bigoplus_{a'', b'', c''} N^{u}_{a, a', a''} N^{4}_{b, b', b''} N^{u+2v}_{c, c', c''} M_{a'', b'', c''}
\]
where the $N^w_{x, x', x''}$ are the fusion rules of $L_{w-2}(\gs\gl_2)$. 
\end{remark}

\subsection{Coincidences}
We are interested in finding coincidences between the simple algebras $C_{k_1, k_2}$ and other vertex algebras such as $\cW_k(\gs\gp_{2n}, f_{\text{prin}})$, $\cW_k(\gs\go_{2n}, f_{\text{prin}})^{\mathbb{Z}_2}$, etc., as well as coincidences $C_{k_1, k_2} \cong C_{\ell_1, \ell_2}$ which are not due to triality.

In the cases $C_{k_1,2}$, $C_{k_1, -1/2}$, and $C_{k_1,k_1}$, we can find certain coincidences using the fact that these algebras arise as quotients of $\cW^{\text{ev}}(c,\lambda)$. Here we use the fact that these coincidences correspond to the intersection points of the truncations curves.

\begin{thm} \label{thm:k2spcoin} For $n\geq 2$, aside from the critical levels $k_1 = -2$, $k_1+2 = -2$ and $\ell = -(n+1)$, and the degenerate cases given by Theorem 8.1 of \cite{KL}, all isomorphisms
$$C_{k_1, 2} \cong \cW_{\ell}(\gs\gp_{2n}, f_{\text{prin}}),$$ appear on the following list.
\begin{enumerate}

\item $\displaystyle k_1 = -\frac{4 n}{1 + 2 n},\ k_1 =  -\frac{2 (3 + 4 n)}{1 + 2 n},\qquad \ell = -(n+1) + \frac{1 + 2 n}{4 (1 + n)}$,

\smallskip

\item $\displaystyle k_1 = \frac{3 - 2 n}{n},\ k_1 = - \frac{3 + 4 n}{n},\qquad \ell = -(n+1) + \frac{3 + 2 n}{4 n}$,

\smallskip

\item $\displaystyle k_1 = -4n,\ k_1 = 4n - 6,\qquad  \ell = -(n+1) + \frac{2n-1}{4 (n-1)}$.
\end{enumerate} \end{thm}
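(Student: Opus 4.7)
The framework is to observe that both $C_{k_1, 2}$ and $\cW_{\ell}(\gs\gp_{2n}, f_{\text{prin}})$ arise, via Theorem \ref{thm:classificationw246} above and \cite[Cor.~5.4]{KL} respectively, as simple quotients of one-parameter specializations of the universal even algebra $\cW^{\text{ev}}(c,\lambda)$. By Theorem 8.1 of \cite{KL}, away from the degenerate cases enumerated there, every pointwise isomorphism between two such simple quotients corresponds to an intersection point of their truncation curves in the $(c,\lambda)$-plane. Hence the classification reduces to finding the common points of two explicit rational plane curves.

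Concretely, the first curve is cut out by the polynomial $p_2(c,\lambda)$ of Theorem \ref{thm:classificationw246}, rationally parametrized by $k_1$ via \eqref{para:k2}. The second curve is the one written down by Hornfeck \cite{H} and rigorously established in \cite[Thm.~9.1]{KL}; it is a rational curve parametrized by $\ell$ whose coefficients depend polynomially on $n$, and it expresses $\lambda$ as an explicit rational function of $c$ and $n$. The plan is to equate $c(k_1)$ with the central charge of $\cW_{\ell}(\gs\gp_{2n}, f_{\text{prin}})$ and likewise for $\lambda$, clear denominators, and eliminate variables to obtain a single polynomial condition on $(k_1, \ell, n)$.

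Applying Theorem \ref{thm:triality} to $\gs\gl_2$ with $k_2 = 2$ produces the symmetry $k_1 \mapsto -k_1 - 6$ of $C^{k_1, 2}$, which accounts for the two values of $k_1$ appearing alongside each of the three values of $\ell$ in the statement: in every family the pair sums to $-6$. The elimination should therefore yield a polynomial $P(k_1, n)$ whose irreducible factorization over $\mathbb{Q}[n]$ consists of three factors quadratic in $k_1$, each invariant under $k_1 \mapsto -k_1 - 6$, and each corresponding to one of the three listed families. Solving the quadratic equation $c(k_1) = c(\ell, n)$ for $\ell$ then recovers the three values $\ell = -(n+1) + \frac{1+2n}{4(1+n)}$, $\ell = -(n+1) + \frac{3+2n}{4n}$, and $\ell = -(n+1) + \frac{2n-1}{4(n-1)}$.

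The main obstacle is the explicit elimination and factorization step, since the truncation polynomial of the type $C$ principal $\cW$-algebra is of high bidegree in $c$ and $\lambda$ with coefficients polynomial in $n$. I would carry this out by a Gr\"obner-basis or resultant computation, and then verify by direct substitution that each of the three families yields valid parameters lying on both curves. Finally, one confirms that no further solutions occur beyond the excluded critical values $k_1 = -2$, $k_1 = -4$, and $\ell = -(n+1)$ together with the degenerate cases catalogued in \cite[Thm.~8.1]{KL}.
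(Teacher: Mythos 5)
Your proposal follows essentially the same route as the paper: both reduce the classification to intersecting the truncation curve $V(I_2)$ of $C^{k_1}_2$ (the curve $p_2$ of Theorem \ref{thm:classificationw246}, parametrized by \eqref{para:k2}) with the truncation curve of $\cW^{\ell}(\gs\gp_{2n}, f_{\text{prin}})$ from \cite{KL}, invoking Theorem 8.1 / Corollary 8.2 of \cite{KL} to guarantee that, outside the degenerate central charges, every pointwise isomorphism is an intersection point. The paper simply computes $V(I_2)\cap V(I_3)$ directly in the $(c,\lambda)$-plane, finds five points, discards the two at degenerate $c$, and translates the remaining three back into $(k_1,\ell)$; your elimination in $(k_1,\ell,n)$ is an equivalent computation, and your triality observation explaining why the $k_1$-values pair up with sum $-6$ is a nice addition not present in the paper.

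There is one completeness step you omit. The reduction to truncation-curve intersections only applies at those $k_1$ and $\ell$ for which the algebras actually arise as quotients of $\cW^{\text{ev}}(c,\lambda)$, and this fails not only at the critical levels but also at the finitely many poles of the rational functions $\lambda(k_1)$ in \eqref{para:k2} (namely $k_1=2$, $k_1=-8$, and the roots of $68+42k_1+7k_1^2$ and $352+354k_1+59k_1^2$) and of $\lambda_3(\ell)$ from (A.2) of \cite{KL}. Your closing sentence lists only $k_1=-2$, $k_1=-4$, $\ell=-(n+1)$ and the degenerate cases, so as written the argument does not rule out an isomorphism occurring at one of these poles. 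The paper handles this by checking that when $k_1$ is such a pole, the values of $\ell$ with matching central charge are not poles of $\lambda_3(\ell)$, so no coincidence can hide there; you would need to add this (short) verification to make the classification genuinely exhaustive.
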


\begin{proof} First, we exclude the values of $k_1$ and $\ell$ which are poles of the functions $\lambda(k)$ given by \eqref{para:k2}, and $\lambda_3(\ell)$ given by Equation (A.2) of \cite{KL} with $k$ replaced by $\ell$, since at these values, $C^{k_1}_2$ and $\cW_{\ell}(\gs\gp_{2n}, f_{\text{prin}})$ are not quotients of $\cW^{\mathrm{ev}}(c,\lambda)$. For all other noncritical values of $k_1$ and $\ell$, $C^{k_1}_2$ and $\cW_{\ell}(\gs\gp_{2n}, f_{\text{prin}})$ are obtained as quotients of $\cW^{\mathrm{ev},J_{n}}(c,\lambda)$ and $\cW^{\mathrm{ev}, J_{m}}(c,\lambda)$, respectively. 

By Corollary 8.2 of \cite{KL}, aside from the degenerate cases given by Theorem 8.1 of \cite{KL}, all other coincidences  $C^{k_1}_2 \cong \cW_{\ell}(\gs\gp_{2n}, f_{\text{prin}})$ correspond to intersection points on the truncation curves $V(J_{2})$ and $V(I_{3})$. Here the generator of $J_2$ is given by Theorem \ref{thm:classificationw246}, and the generator of $I_3$ is given by Equation (A.1) of \cite{KL}. A calculation shows that $V(J_{2}) \cap V(I_{3})$ consists of exactly five points $(c,\lambda)$, namely, 

\begin{equation} \begin{split} &\bigg(-24, -\frac{1}{245}\bigg),\qquad \bigg(\frac{1}{2}, -\frac{2}{49}\bigg),
\\ & \bigg( -\frac{3 n (3 + 4 n)}{2 (1 + n)},\  -\frac{2 (1 + n) (-164 - 751 n - 746 n^2 + 516 n^3 + 720 n^4)}{7 (2 + 3 n) (1 + 4 n) (17 + 26 n + 12 n^2) (-44 + n + 60 n^2)}   \bigg),
\\ & \bigg( -\frac{(2n-3) (3 + 4 n)}{2 (3 + 2 n)}\ -\frac{2 (3 + 2 n) (99 + 132 n - 880 n^2 - 616 n^3 + 160 n^4)}{7 (1 + 2 n) (-3 + 4 n) (21 + 14 n + 4 n^2) (-177 - 118 n + 40 n^2)} \bigg),
\\ & \bigg( \frac{3 n (2n-3)}{2 (n-1) (2n-1)}, \ -\frac{2 (n-1) (2n-1) (-164 + 561 n - 176 n^2 - 264 n^3 + 88 n^4)}{7 (n-2) (1 + 2 n) (17 - 42 n + 28 n^2) (44 - 177 n + 118 n^2)}   \bigg).\end{split} \end{equation}

By Theorem 8.1 of \cite{KL}, the first two intersection points occur at degenerate values of $c$. By replacing the parameter $c$ with the levels $k_1$ ad $\ell$, we see that the remaining intersection points yield the nontrivial isomorphisms in Theorem \ref{thm:k2spcoin}. Moreover, by Corollary 8.2 of \cite{KL}, these are the only such isomorphisms except possibly at the values of $k, \ell$ excluded above. 

Finally, suppose that $k_1$ is a pole of the function $\lambda(k_1)$ given by \eqref{para:k2}. It is not difficult to check that the corresponding values of $\ell$ for which $c(k_1) = c_3(\ell)$, are not poles of $\lambda_3(\ell)$. Here $c(k_1)$ and $\lambda(k_1)$ are given by \eqref{para:k2}, and $c_3(\ell)$ and $\lambda_3(\ell)$ are given by (A.2) of \cite{KL}, with $k$ replaced by $\ell$.  It follows that there are no additional coincidences at the excluded points.
\end{proof}

Note that in examples (1) and (2) above,  $\ell$ is a nondegenerate admissible level for $\gs\gp_{2n}$, but in example (3), $\ell$ is a degenerate admissible level. Since $C_{4n-6, 2}$ is strongly rational by Theorem \ref{thm:strongrational2}, we obtain

\begin{cor} For $n\geq 2$, at the degenerate admissible level $\displaystyle \ell = -(n+1) + \frac{2n-1}{4 (n-1)}$, $\cW_{\ell}(\gs\gp_{2n}, f_{\text{prin}})$ is strongly rational. These are new examples of strongly rational principal $\cW$-algebras.
\end{cor}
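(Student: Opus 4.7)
The plan is to combine the coincidence in part (3) of Theorem \ref{thm:k2spcoin} with the rationality result Theorem \ref{thm:strongrational2}. Concretely, by Theorem \ref{thm:k2spcoin}(3), at $k_1 = 4n-6$ (and $k_2 = 2$) the simple vertex algebra $C_{k_1, 2}$ is isomorphic to $\cW_{\ell}(\gs\gp_{2n}, f_{\text{prin}})$ with $\ell = -(n+1) + \frac{2n-1}{4(n-1)}$. For $n \geq 2$ we have $4n - 6 \in \mathbb{Z}_{\geq 2}$, so $k_1$ is a positive integer and Theorem \ref{thm:strongrational2} applies to give that $C_{4n-6, 2}$ is strongly rational. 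Transporting this structure across the isomorphism yields the strong rationality of $\cW_{\ell}(\gs\gp_{2n}, f_{\text{prin}})$.

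To argue that these examples are new, I would verify that $\ell$ is a \emph{degenerate} admissible level for $\gs\gp_{2n}$, so that it lies outside the scope of Arakawa's theorem \cite{A2}, which handles the nondegenerate admissible case. Writing $\ell + h^{\vee} = (2n-1)/(4(n-1))$ with $h^{\vee} = n+1$, one checks that $\gcd(2n-1, 4(n-1)) = \gcd(2n-1, 2) = 1$ because $2n-1$ is odd, and that $2n - 1 \geq n+1 = h^{\vee}$ since $n \geq 2$; hence $\ell$ is admissible. On the other hand, for type $C_n$ the lacing number is $r^{\vee} = 2$, and nondegeneracy requires the denominator of $\ell + h^{\vee}$ to be coprime to $r^{\vee}$. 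Since the denominator $4(n-1)$ is even, $\ell$ is degenerate, and thus not covered by \cite{A2}. One should also compare with the explicit nonadmissible rational examples of type $C$ found in \cite{KL} to confirm they are different from the present family (they are, since those arise on the truncation curves of $\cW^k(\gs\gp_{2n}, f_{\text{prin}})$ at other intersection points).

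The logically nontrivial step in the whole argument is of course the input Theorem \ref{thm:strongrational2}, whose proof uses conformal extensions of affine vertex algebras and the Carnahan--Miyamoto orbifold rationality theorem \cite{CM}; once that is granted, the corollary is essentially a formal consequence of the classification of intersection points between the truncation curve for $C^{k_1}_2$ and the truncation curve for $\cW^{\ell}(\gs\gp_{2n}, f_{\text{prin}})$. The only subtle point I anticipate is making sure that the isomorphism at $k_1 = 4n-6$ is indeed between the honest simple quotients (and not between specializations of the two-parameter algebra that could a priori be nonsimple), so one should check that $k_1 = 4n-6$ is not among the excluded loci from Corollary 6.7 of \cite{CL}, i.e.\ that $k_1 + k_2 + 2 = 4n - 2 \notin \mathbb{Q}_{\leq 0}$, which holds trivially for $n \geq 2$. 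With that verification, the simple quotient $C_{4n-6, 2}$ coincides with the specialization used in Theorem \ref{thm:k2spcoin}, and the proof is complete.
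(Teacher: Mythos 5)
Your argument is exactly the paper's: the corollary follows by combining the coincidence $C_{4n-6,\,2} \cong \cW_{\ell}(\gs\gp_{2n}, f_{\text{prin}})$ from Theorem \ref{thm:k2spcoin}(3) (noting $4n-6$ is a positive integer for $n\geq 2$) with the strong rationality of $C_{k,2}$ from Theorem \ref{thm:strongrational2}, and your extra check that $k_1+k_2+2 = 4n-2 \notin \mathbb{Q}_{\leq 0}$, so the specialization is the honest simple coset, is a worthwhile point the paper leaves implicit. One small quibble in your side verification of newness: since $q = 4(n-1)$ is even and $r^{\vee}=2$, admissibility is governed by the coprincipal Kac--Wakimoto condition (a lower bound on $p$ involving the Coxeter number of the Langlands dual side, not simply $p \geq h^{\vee}$), though the conclusion that $\ell$ is degenerate admissible agrees with what the paper asserts.
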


The next result classifies the coincidences between $C_{k_1, 2}$ and the simple $D$-type orbifolds $\cW_{\ell}(\gs\go_{2n}, f_{\text{prin}})^{\mathbb{Z}_2}$. The proof is similar to the proof of Theorem \ref{thm:k2spcoin}, and is omitted.
\begin{thm} For $n\geq 3$, aside from the critical levels $k_1 = -2$, $k_1+2 = -2$, and $\ell = -(2n-2)$, and the degenerate cases given by Theorem 8.1 of \cite{KL}, all isomorphisms
$$C_{k_1, 2} \cong \cW_{\ell}(\gs\go_{2n}, f_{\text{prin}})^{\mathbb{Z}_2},$$ appear on the following list.

$\displaystyle k_1 = -\frac{8 n}{2n-1},\ k_1  = -\frac{2 (2n-3)}{2n-1}, \qquad  \ell = -(2n-2) + \frac{2n-1}{2n+1},\  \ell = -(2n-2) + \frac{2n+1}{2n-1}$.
\end{thm}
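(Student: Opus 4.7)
The plan is to follow the template of the proof of Theorem \ref{thm:k2spcoin} verbatim, replacing the truncation curve for $\cW_{\ell}(\gs\gp_{2n}, f_{\text{prin}})$ by that of its Feigin--Frenkel / orbifold counterpart. First, for $k_1$ outside the poles of the parametrization \eqref{para:k2}, $C^{k_1}_2$ is realized as the quotient $\cW^{\mathrm{ev}, I_2}(c,\lambda)/\cI$ along the truncation curve $V(I_2)$ with $I_2 = (p_2)$ of Theorem \ref{thm:classificationw246}. Similarly, for $\ell \neq -(2n-2)$ outside the poles of the parametrization recorded in \cite{KL}, the simple orbifold $\cW_\ell(\gs\go_{2n}, f_{\text{prin}})^{\mathbb{Z}_2}$ is realized as a one-parameter $\cW^{\mathrm{ev}}(c,\lambda)$-quotient along the truncation curve $V(I_{\gs\go_{2n}})$ written down explicitly in \cite{KL} (see Theorems 9.3 and 9.4 and the formula preceding Equation (A.2) there). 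Corollary 8.2 of \cite{KL} then says that, apart from the degenerate cases enumerated in Theorem 8.1 of \cite{KL}, every isomorphism $C_{k_1, 2} \cong \cW_\ell(\gs\go_{2n}, f_{\text{prin}})^{\mathbb{Z}_2}$ corresponds to an intersection point of the two plane curves $V(I_2) \cap V(I_{\gs\go_{2n}}) \subseteq \mathbb{C}^2$.

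Next, I would compute this intersection explicitly. Since $p_2$ is a polynomial of degree $2$ in $\lambda$, eliminating $\lambda$ from the pair of defining equations gives a polynomial equation in $c$ alone whose roots can be found in closed form as rational functions of $n$. For each such root $c_0$, the corresponding $\lambda_0$ is recovered from $p_2(c_0, \lambda_0) = 0$. Inverting the parametrizations $c = c(k_1)$ from \eqref{para:k2} and $c = c(\ell)$ from \cite{KL} (both of which are rational functions of degree $2$ in the respective level), each intersection point $(c_0, \lambda_0)$ yields two candidate values for $k_1$ and two for $\ell$, producing the four-way coincidence pattern $C_{k_1, 2} \cong C_{k_1', 2} \cong \cW_\ell(\gs\go_{2n}, f_{\text{prin}})^{\mathbb{Z}_2} \cong \cW_{\ell'}(\gs\go_{2n}, f_{\text{prin}})^{\mathbb{Z}_2}$. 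Throwing away the degenerate $(c_0, \lambda_0)$ listed in Theorem 8.1 of \cite{KL} should leave exactly one orbit, giving the two values of $k_1$ and two values of $\ell$ in the statement.

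After this, two bookkeeping steps remain. The first is to verify that the conformal weights are consistent: one checks directly that with $c = c(k_1) = c(\ell)$ at the claimed levels, the central charges match, which provides a useful sanity check on the intersection calculation. The second is to rule out that additional coincidences are hiding at the excluded pole values of $k_1$ (the four poles of \eqref{para:k2}) and $\ell$ (the poles listed in \cite{KL}); as in the proof of Theorem \ref{thm:k2spcoin}, one verifies that for each excluded $k_1$ value, the corresponding $\ell$ solving $c(k_1) = c(\ell)$ is not itself a pole of the $\cW_\ell(\gs\go_{2n}, f_{\text{prin}})^{\mathbb{Z}_2}$ parametrization, so no extra identifications arise there.

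The main obstacle is purely computational: the curve $V(I_{\gs\go_{2n}})$ has structure constants that depend on $n$, so the resultant in $\lambda$ is a polynomial in $c$ with coefficients in $\mathbb{Q}(n)$, and its factorization must be carried out in $\mathbb{Q}(n)[c]$. One expects several spurious factors corresponding to the degenerate cases of Theorem 8.1 of \cite{KL} (typically $c = -24$, $c = 1/2$, and factors forcing $n$-independent central charges), which must be discarded. The fact that only a single nondegenerate orbit survives — producing exactly the two $k_1$ and two $\ell$ values in the statement — is what makes the classification clean, and this is precisely the pattern that already appeared in Theorem \ref{thm:k2spcoin}. I expect no conceptual difficulty beyond Thielemans-package computations analogous to those already carried out in \cite{KL}.
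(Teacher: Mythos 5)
Your proposal is correct and follows exactly the route the paper intends: the paper omits this proof, stating only that it is ``similar to the proof of Theorem \ref{thm:k2spcoin},'' and your argument is precisely that template --- realize both algebras as quotients of $\cW^{\text{ev}}(c,\lambda)$ via their truncation curves, invoke Corollary 8.2 of \cite{KL} to reduce to intersection points, discard the degenerate cases of Theorem 8.1 of \cite{KL}, and check the excluded pole values separately. Your observation that the single surviving nondegenerate intersection point yields two values of $k_1$ and two values of $\ell$ (the latter pair related by Feigin--Frenkel self-duality of $\gs\go_{2n}$) matches the statement.
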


Next, we classify all coincidences between $C_{k_1, -1/2}$ and the algebras $\cW_{\ell}(\gs\gp_{2n}, f_{\text{prin}})$ and $\cW_{\ell}(\gs\go_{2n}, f_{\text{prin}})^{\mathbb{Z}_2}$.
\begin{thm} \label{thm:c-1/2coinc} For $n\geq 2$, aside from the critical levels $k_1 = -2$, $k_1 - 1/2 = -2$ and $\ell = -(n+1)$, and the degenerate cases given by Theorem 8.1 of \cite{KL}, all isomorphisms
$$C_{k_1,-1/2} \cong \cW_{\ell}(\gs\gp_{2n}, f_{\text{prin}}),$$ appear on the following list.
\begin{enumerate}

\item $\displaystyle k_1 = \frac{1 - 4 n}{2 n},\ k_1 = - \frac{1 + 3 n}{2 n},\qquad \ell = -(n+1) + \frac{-1 + n}{2 n}$,

\smallskip

\item $\displaystyle k_1 = n, \ k_1 = - \frac{1}{2} (7 + 2 n),\qquad  \ell = -(n+1) + \frac{2 + n}{3 + 2 n}$,

\smallskip

\item $\displaystyle k_1 = -\frac{4 n}{1 + 2 n},\ k_1 = -\frac{7 + 6 n}{2 (1 + 2 n)},\qquad \ell = -(n+1) +\frac{1 + 2 n}{2 (2n-3)}$.

\end{enumerate}
\end{thm}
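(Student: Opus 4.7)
The proof follows the same template as that of Theorem \ref{thm:k2spcoin}, replacing the truncation curve for $C^{k_1}_2$ by the one for $C^{k_1}_{-1/2}$. The plan is as follows.

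First, I would exclude the parameter values where either algebra fails to be a one-parameter quotient of $\cW^{\mathrm{ev}}(c,\lambda)$. Concretely, these are the poles of the function $\lambda(k_1)$ from \eqref{para:kh} and the poles of the analogous function $\lambda_3(\ell)$ coming from Equation (A.1) of \cite{KL}, together with the critical levels $k_1 = -2$, $k_1 - 1/2 = -2$, and $\ell = -(n+1)$. At all other noncritical values, $C^{k_1}_{-1/2}$ is identified with $\cW^{\mathrm{ev}}_{I_3}(c,\lambda)$ (with $I_3$ as in Theorem \ref{thm:classificationw246}) via the parametrization \eqref{para:kh}, while $\cW_\ell(\gs\gp_{2n}, f_{\mathrm{prin}})$ is identified with $\cW^{\mathrm{ev}}_{J_n}(c,\lambda)$ for a truncation curve $V(J_n) \subseteq \mathbb{C}^2$ whose defining polynomial is given explicitly in (A.1) of \cite{KL}.

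Next, by Corollary 8.2 of \cite{KL}, every nondegenerate isomorphism $C_{k_1,-1/2} \cong \cW_\ell(\gs\gp_{2n}, f_{\mathrm{prin}})$ corresponds to an intersection point of the two truncation curves $V(I_3)$ and $V(J_n)$. I would therefore compute $V(I_3) \cap V(J_n)$. Since both defining polynomials are quadratic in $\lambda$ with coefficients polynomial in $c$, eliminating $\lambda$ yields a polynomial equation of bounded degree in $c$ whose roots are rational functions of $n$. In analogy with the calculation in Theorem \ref{thm:k2spcoin}, I expect exactly five intersection points: two at degenerate values of $c$ (to be matched against the list in Theorem 8.1 of \cite{KL}) and three one-parameter families in $n$. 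Converting the $(c,\lambda)$ values back to $(k_1,\ell)$ via \eqref{para:kh} and (A.1) of \cite{KL} should reproduce the three families listed in (1)--(3); each $c$-value will correspond to two values of $k_1$ (the two roots of the quadratic $c(k_1)$) and a single value of $\ell$. The symmetry $k_1 \leftrightarrow -k_1 - 7/2 + \ldots$ coming from triality on $C^{k_1,k_2}$ accounts for the two values of $k_1$ attached to each $\ell$.

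Finally, I would clean up the exceptional points. The degenerate coincidences are handled by direct comparison with Theorem 8.1 of \cite{KL}, which lists the $(c,\lambda)$ values where the general correspondence between truncation curves and simple quotients of $\cW^{\mathrm{ev}}(c,\lambda)$ breaks down. For the values of $k_1$ that are poles of $\lambda(k_1)$, I would verify, as in the last paragraph of the proof of Theorem \ref{thm:k2spcoin}, that the corresponding $\ell$ with $c(k_1) = c_3(\ell)$ is not simultaneously a pole of $\lambda_3(\ell)$, so no additional coincidences are hidden at the excluded points. The main computational obstacle is the explicit intersection of the two plane curves and the identification of the resulting five points; however, this is a finite symbolic calculation that proceeds in direct parallel to the $C_{k_1,2}$ case, so no genuinely new difficulty arises.
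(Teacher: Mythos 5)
Your proposal is correct and follows exactly the argument the paper intends: the paper explicitly omits this proof, stating it is the same as that of Theorem \ref{thm:k2spcoin}, and your plan (identify both algebras as quotients of $\cW^{\mathrm{ev}}(c,\lambda)$ via the truncation curves $I_3$ from Theorem \ref{thm:classificationw246} and (A.1) of \cite{KL}, invoke Corollary 8.2 of \cite{KL} to reduce coincidences to intersection points, discard the degenerate points via Theorem 8.1 of \cite{KL}, and check the excluded pole values separately) is precisely that argument. The only slips are cosmetic: the parametrization $\lambda_3(\ell)$ comes from (A.2) rather than (A.1) of \cite{KL}, and $p_3$ is linear rather than quadratic in $\lambda$, neither of which affects the elimination.
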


\begin{thm} \label{thm:d-1/2coinc} For $n\geq 3$, aside from the critical levels  $k_1 = -2$, $k_1 - 1/2 = -2$, and $\ell = -(2n-2)$, and the degenerate cases given by Theorem 8.1 of \cite{KL}, all isomorphisms
$$C_{k_1, -1/2} \cong \cW_{\ell}(\gs\go_{2n}, f_{\text{prin}})^{\mathbb{Z}_2},$$ appear on the following list.

$\displaystyle k_1 = -\frac{3 n}{-1 + 2 n},\ k_1 = \frac{7 - 8 n}{2 (2n-1)},\qquad \ell = -(2n-2) + \frac{2n-1}{2 (n-2)},\ \ell = -(2n-2) + \frac{2 (n-2)}{2n-1}$. 
\end{thm}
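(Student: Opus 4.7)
The plan is to mirror the proof of Theorem \ref{thm:k2spcoin} almost verbatim, using the fact that both $C^{k_1}_{-1/2}$ and $\cW_{\ell}(\gs\go_{2n}, f_{\text{prin}})^{\mathbb{Z}_2}$ can be realized as one-parameter quotients of the universal even-spin $\cW^{\text{ev}}(c,\lambda)$-algebra constructed in \cite{KL}. By the general principle (Corollary 8.2 of \cite{KL}), away from the degenerate $c$-values of Theorem 8.1 of \cite{KL}, every pointwise isomorphism between two such quotients corresponds to an intersection point of their respective truncation curves $V(J) \subseteq \mathbb{C}^2$.

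Concretely, I would proceed as follows. First, $C^{k_1}_{-1/2}$ is realized as the quotient along the truncation curve $V(I_3)$, with $I_3 = (p_3)$ from Theorem \ref{thm:classificationw246}, and the parametrization $c(k_1), \lambda(k_1)$ of \eqref{para:kh}. Second, $\cW_{\ell}(\gs\go_{2n}, f_{\text{prin}})^{\mathbb{Z}_2}$ is a quotient along the truncation curve given by Theorem 9.4 and Equation (A.3) of \cite{KL} (where its rational parametrization $c(\ell), \lambda(\ell)$ is recorded). I would exclude those $k_1$ and $\ell$ that are poles of $\lambda(k_1)$ or $\lambda(\ell)$ (at such values the associated vertex algebra fails to be a quotient of $\cW^{\text{ev}}(c,\lambda)$ in the required sense), together with the critical levels $k_1 = -2$, $k_1 - 1/2 = -2$, and $\ell = -(2n-2)$.

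Next I would compute the scheme-theoretic intersection of the two truncation curves. Eliminating $\lambda$ between $p_3(c,\lambda) = 0$ and the $D$-orbifold truncation polynomial produces a polynomial in $c$ whose roots, together with the degenerate values flagged by Theorem 8.1 of \cite{KL}, account for all candidate coincidence points. Pulling each nondegenerate root back through the parametrizations $c = c(k_1)$ and $c = c(\ell)$ yields the finite list of $(k_1, \ell)$ pairs; in each case the $k_1$ equation is quadratic (giving the two solutions $k_1 = -\tfrac{3n}{2n-1}$ and $k_1 = \tfrac{7-8n}{2(2n-1)}$) and the $\ell$ equation is quadratic (giving $\ell = -(2n-2) + \tfrac{2n-1}{2(n-2)}$ and $\ell = -(2n-2) + \tfrac{2(n-2)}{2n-1}$), reflecting the $k_1 \leftrightarrow -k_1 - 5/2$ symmetry of $C^{k_1}_{-1/2}$ and the Feigin--Frenkel type duality of the $D$-orbifold. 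Finally, I would check directly that at the finitely many excluded pole values, the corresponding $\ell$ solving $c(k_1) = c(\ell)$ is \emph{not} itself a pole of $\lambda(\ell)$, so these cases contribute no additional isomorphisms.

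The only nontrivial step is the algebraic elimination in step three; this is a routine but arithmetically unpleasant resultant computation in $\mathbb{C}[c,\lambda]$, entirely analogous to the one carried out in the proof of Theorem \ref{thm:k2spcoin}, and best performed by computer algebra. Once the resultant is factored and the extraneous (degenerate in the sense of Theorem 8.1 of \cite{KL}) factors are stripped, the listed coincidences drop out immediately, and the uniqueness part of Corollary 8.2 of \cite{KL} ensures the list is complete.
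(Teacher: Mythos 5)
Your proposal is correct and is essentially the paper's own argument: the paper explicitly omits this proof on the grounds that it is the same truncation-curve intersection computation as in Theorem \ref{thm:k2spcoin}, using the curve $V(I_3)$ with the parametrization \eqref{para:kh} for $C^{k_1}_{-1/2}$ and the truncation curve for $\cW^{\ell}(\gs\go_{2n},f_{\text{prin}})^{\mathbb{Z}_2}$ recorded in \cite{KL}, followed by exclusion of the degenerate values and the pole check. One small slip in your parenthetical aside: triality gives $C_{k_1,-1/2}\cong C_{-7/2-k_1,\,-1/2}$ (indeed the two listed values of $k_1$ sum to $-7/2$), so the relevant symmetry is $k_1\leftrightarrow -k_1-\tfrac{7}{2}$ rather than $k_1\leftrightarrow -k_1-\tfrac{5}{2}$; this does not affect the computation itself.
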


The proofs of Theorems \ref{thm:c-1/2coinc} and \ref{thm:d-1/2coinc} are again similar to the proof of Theorem \ref{thm:k2spcoin}, and are omitted.

Similarly, we can classify all coincidences between $C_{k_1, k_1}$ and the algebras $\cW_{\ell}(\gs\gp_{2n}, f_{\text{prin}})$ and $\cW_{\ell}(\gs\go_{2n}, f_{\text{prin}})^{\mathbb{Z}_2}$. Again, the arguments are similar to the proof of Theorem \ref{thm:k2spcoin}, and are omitted.

\begin{thm} For $n\geq 2$, aside from the critical levels $k_1 = -2$, $k_1 = -1$ and $\ell = -(n+1)$, and the degenerate cases given by Theorem 8.1 of \cite{KL}, all isomorphisms
$$C_{k_1, k_1} \cong \cW_{\ell}(\gs\gp_{2n}, f_{\text{prin}}),$$ appear on the following list.
\begin{enumerate}

\item $\displaystyle k_1 = n,\qquad \ell = -(n+1) + \frac{n+2}{2 (n+1)}$,

\smallskip

\item $\displaystyle k_1 = -\frac{2n+3}{2 n},\qquad  \ell = -(n+1) +\frac{n}{2 n -3}$,

\smallskip

\item $\displaystyle k_1 = -\frac{2 n}{1 + n}, \qquad \ell = -(n+1) + \frac{n+1}{2 (n-1)}$,

\smallskip

\item $\displaystyle k_1 = -\frac{2 n}{2 n -1} , \qquad \ell = -(n+1) + \frac{n -1}{2 n -1}$,

\smallskip

\item $\displaystyle k_1 = -\frac{4 n}{1 + 2 n}, \qquad \ell = -(n+1) + \frac{2 n -1}{2 (2n+1)}$.
\end{enumerate}
\end{thm}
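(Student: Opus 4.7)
The proof will follow the same blueprint as the proof of Theorem~\ref{thm:k2spcoin}, replacing the truncation curve for $C^{k_1}_{2}$ with the truncation curve for $C^{k_1, k_1}_J$ given by \eqref{kkcurve}. First I would recall that, by the theorem on $C^{k_1, k_1}_J$ proved above, the simple one-parameter vertex algebra $C^{k_1, k_1}_J$ is the quotient of $\cW^{\text{ev}}(c,\lambda)$ along the ideal $I_4 = (p_4)$ given by \eqref{kkcurve}, with the explicit parametrization $c(k_1), \lambda(k_1)$ in \eqref{para:kk}. On the other hand, $\cW_\ell(\gs\gp_{2n}, f_{\text{prin}})$ arises as the quotient of $\cW^{\text{ev}}(c,\lambda)$ along an ideal $I_{3,n} = (p_{3,n})$ whose generator is Equation (A.1) of \cite{KL}, with the parametrization $c_3(\ell), \lambda_3(\ell)$ given by (A.2) of \cite{KL}. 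By Corollary~8.2 of \cite{KL}, outside the degenerate cases in Theorem~8.1 of \cite{KL} and outside the values where either $\lambda(k_1)$ or $\lambda_3(\ell)$ has a pole, every isomorphism $C_{k_1, k_1} \cong \cW_\ell(\gs\gp_{2n}, f_{\text{prin}})$ corresponds to a point of the intersection $V(I_4) \cap V(I_{3,n})$ in the $(c,\lambda)$-plane.

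Next I would compute this intersection explicitly. Since both $p_4$ and $p_{3,n}$ are quadratic in $\lambda$, the intersection is cut out by the resultant $\text{Res}_\lambda(p_4, p_{3,n}) \in \mathbb{C}[c,n]$. One then factors this resultant in $c$ with $n$ as a parameter; the factors that correspond to degenerate values of $c$ in the sense of Theorem~8.1 of \cite{KL} are discarded, and the remaining factors give the candidate values of $c$ at the intersection. Substituting each such $c$ back into $p_4$ and $p_{3,n}$ determines the corresponding $\lambda$. The final step is to invert the parametrizations \eqref{para:kk} and (A.2) of \cite{KL} to translate each intersection point $(c,\lambda)$ into a pair $(k_1, \ell)$; since both parametrizations are rational in one variable, this inversion produces a finite list of pairs, and the symmetry $k_1 \leftrightarrow k_1$ of the diagonal coset does not introduce additional solutions. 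I expect precisely the five families listed in the statement, along with the degenerate points which are discarded.

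The main obstacle is the resultant calculation: one must factor a polynomial in $c$ whose coefficients are polynomials in $n$, and separate the genuine geometric intersections from the spurious factors arising either from degenerate values of $c$ or from the poles of $\lambda(k_1)$ and $\lambda_3(\ell)$. This is essentially a computer algebra calculation, but one must verify structurally that the five families on the list exhaust the nondegenerate intersection points. To close the argument, I would finally treat the excluded poles of $\lambda(k_1)$ and $\lambda_3(\ell)$ by checking directly that, at any $k_1$ which is a pole of $\lambda(k_1)$, the corresponding $\ell$ satisfying $c(k_1) = c_3(\ell)$ is not a pole of $\lambda_3(\ell)$ (and vice versa), so no additional coincidences hide at these points. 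This step is routine and follows the same pattern as the corresponding step in the proof of Theorem~\ref{thm:k2spcoin}.
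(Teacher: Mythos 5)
Your proposal follows exactly the strategy the paper intends: the paper omits this proof, stating only that "the arguments are similar to the proof of Theorem \ref{thm:k2spcoin}," and your blueprint — intersecting the truncation curve $V(I_4)$ from \eqref{kkcurve} with the curve for $\cW^\ell(\gs\gp_{2n},f_{\text{prin}})$ from (A.1) of \cite{KL}, invoking Corollary 8.2 of \cite{KL} to reduce coincidences to intersection points, discarding the degenerate cases of Theorem 8.1 of \cite{KL}, inverting the parametrizations \eqref{para:kk} and (A.2) of \cite{KL}, and checking the excluded poles separately — is precisely that argument. This is correct and essentially identical to the paper's (omitted) proof.
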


\begin{remark} In case (1) above, suppose the $k_1 = n = 2m$ is an even integer. Then $\displaystyle \ell = -(2m+1) + \frac{m+1}{2m+1}$ which is not an admissible level for $\gs\gp_{4m}$. However, $\cW_{\ell}(\gs\gp_{4m}, f_{\text{prin}})$ is known to be rational because it is isomorphic to $\cW_{\ell'}(\gs\go_{2m},f_{\text{prin}})$ for $\displaystyle \ell' = -(2n-2) + \frac{2 (n+1)}{2n+1}$, and $\ell'$ is nondegenerate admissible for $\gs\go_{2m}$; see Remark 9.5 of \cite{KL} as well as Theorem \ref{thm:kksocoincidences} below. Therefore $C_{2m, 2m}$ is strongly rational for all positive integers $m$. We also expect that $C_{n,n}$ is strongly rational for all odd positive integers $n$, which is a special case of Conjecture \ref{conj:rationality}. If so, this would imply the strong rationality of $\cW_{\ell}(\gs\gp_{2n}, f_{\text{prin}})$ at the nonadmissible level $\displaystyle \ell = -(n+1) + \frac{n+2}{2 (n+1)}$.
\end{remark}

\begin{thm} \label{thm:kksocoincidences} For $n\geq 2$, aside from the critical levels $k_1 = -2$, $k_1 = -1$ and $\ell = -(2n-2)$, and the degenerate cases given by Theorem 8.1 of \cite{KL}, all isomorphisms
$$C_{k_1, k_1} \cong \cW_{\ell}(\gs\go_{2n}, f_{\text{prin}})^{\mathbb{Z}_2},$$ appear on the following list.
\begin{enumerate}

\item $\displaystyle k_1 = 2n,\qquad \ell = -(2n-2) + \frac{2n+1}{2 (n+1)},\ \ell = -(2n-2) + \frac{2 (n+1)}{2n+1}$,

\smallskip

\item $\displaystyle k_1 = -\frac{4n -3}{2 n}, \qquad \ell = -(2n-2) + \frac{2n-3}{2 n},\ \ell = -(2n-2) + \frac{2 n}{2n-3}$,

\smallskip

\item $\displaystyle k_1 = -\frac{n}{n-1},\qquad \ell = -(2n-2) + \frac{n-1}{n-2},\  \ell = -(2n-2) + \frac{n-2}{n-1}$.

\end{enumerate}
\end{thm}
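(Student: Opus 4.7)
The plan is to follow the same template used in the proof of Theorem \ref{thm:k2spcoin}, with the truncation curve for the $C_{k_1,k_1}$ family replaced by the curve $V(I_4)$ of equation \eqref{kkcurve}, and the truncation curve of $\cW^{\ell}(\gs\go_{2n}, f_{\text{prin}})^{\mathbb{Z}_2}$ taken from \cite{KL}. Concretely, both sides are realized as one-parameter quotients of $\cW^{\mathrm{ev}}(c,\lambda)$: the left-hand side via the rational parametrization \eqref{para:kk}, and the right-hand side via the parametrization recorded in \cite{KL} (Equations (A.2)ff.\ or the equivalent $D$-type formulas). For all values of $k_1$ and $\ell$ that are noncritical and that are not poles of these parametrizations, Corollary 8.2 of \cite{KL} tells us that coincidences between the simple quotients correspond exactly to intersection points of the two truncation curves in $(c,\lambda)$-space, apart from the finite list of degenerate coincidences allowed by Theorem 8.1 of \cite{KL}.

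The first step is therefore to compute $V(I_4) \cap V(J_n^{D})$, where $J_n^D$ denotes the ideal cutting out $\cW^{\ell}(\gs\go_{2n}, f_{\text{prin}})^{\mathbb{Z}_2}$. The cleanest way is to substitute the rational functions $c(k_1)$ and $\lambda(k_1)$ from \eqref{para:kk} into the defining polynomial of $V(J_n^D)$, clear denominators, and factor the resulting polynomial in $k_1$ over $\mathbb{Q}(n)$. I expect exactly three irreducible factors in $k_1$, parametrized by $n$, yielding the three families $k_1 = 2n$, $k_1 = -(4n-3)/(2n)$, and $k_1 = -n/(n-1)$. For each such solution, substituting back gives a value of $c$, and the corresponding $\ell$ is read off from the $D$-type parametrization; the Feigin--Frenkel involution $\ell \mapsto -(2n-2) + (\ell + 2n - 2)^{-1}$ preserves $V(J_n^D)$, which explains and produces the two values of $\ell$ listed for each $k_1$.

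The second step is to dispose of the excluded parameter values. At $k_1 = -2$ and $k_1 = -1$, the vertex algebra $C^{k_1,k_1}$ itself is not defined via the Sugawara construction (or the diagonal embedding collapses), so these must be omitted; similarly $\ell = -(2n-2)$ is the critical level for $\gs\go_{2n}$. Apart from these, one also has the finitely many values of $k_1$ (resp.\ $\ell$) where \eqref{para:kk} (resp.\ the $D$-type parametrization) has a pole, so that the algebra is not realized as a $\cW^{\mathrm{ev}}(c,\lambda)$-quotient in the usual way and Corollary 8.2 of \cite{KL} does not apply directly. At each such excluded value of $k_1$, one computes the unique $\ell$ that would produce the matching central charge and checks (a finite, short case analysis in $n$) that it is not a pole on the other side; then Corollary 8.2 still covers that point and shows no new coincidences arise.

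The conceptual content is entirely parallel to Theorem \ref{thm:k2spcoin}; the main obstacle is purely computational, namely the elimination of $k_1$ from the two-variable system and the verification that the polynomial in $k_1$ (with $n$ as a parameter) factors exactly into the three families above with no extraneous roots. This is a routine but delicate Gr\"obner basis / resultant calculation best handled with Mathematica or an analogous computer algebra system, and I expect the bookkeeping of the degenerate and pole cases (to make sure the final list is complete and nothing is double-counted under the $\ell$-duality) to be the most error-prone part.
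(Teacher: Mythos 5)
Your proposal is correct and takes essentially the same route as the paper: the paper omits the proof of this theorem, stating only that the argument is similar to that of Theorem \ref{thm:k2spcoin}, and your plan — realizing $C^{k_1,k_1}_J$ and $\cW^{\ell}(\gs\go_{2n}, f_{\text{prin}})^{\mathbb{Z}_2}$ as quotients of $\cW^{\mathrm{ev}}(c,\lambda)$ via \eqref{para:kk} and the $D$-type truncation curve of \cite{KL}, invoking Corollary 8.2 of \cite{KL} to reduce coincidences to intersection points of the truncation curves, attributing the two values of $\ell$ per $k_1$ to Feigin--Frenkel self-duality, and separately checking the critical levels, parametrization poles, and degenerate cases of Theorem 8.1 of \cite{KL} — is exactly that argument.
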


Next, using the fact that $C^{k_1, -1}$ is isomorphic to the $\mathbb{Z}_2$-orbifold of a one-parameter quotient of the algebra $\cW(c,\lambda)$ constructed in \cite{L6}, we can classify all coincidences between $C_{k_1, -1}$ and other algebras arising as $\mathbb{Z}_2$-orbifolds of quotients of $\cW(c,\lambda)$. For example, the $\mathbb{Z}_2$-orbifolds of type $A$ principal $\cW$-algebras $\mathcal{W}_{\ell}(\mathfrak{sl}_n, f_{\text{prin}})^{\mathbb{Z}_2}$ have this property, and the next result follows immediately from Theorem \ref{thm:onepara-1} and Theorem 11.5 of \cite{L6}.

\begin{thm}\label{thm:Wiso} For $n\geq 3$, aside from the critical levels $k_1 = -2$, $k_1 =-1$, and $\ell = -n$, and the degenerate cases given by Theorem 10.1 of \cite{L6}, all isomorphisms
$C_{k_1, -1}\cong \mathcal{W}_{\ell}(\mathfrak{sl}_n, f_{\text{prin}})^{\mathbb{Z}_2}$ appear on the following list.
\begin{equation} k_1 = -\frac{n+2}{n}, \qquad k_1= -\frac{2 (n-1)}{n}\qquad \ell =  -n + \frac{n-2}{n} ,\qquad \ell = -n + \frac{n}{n-2}.\end{equation} 
\end{thm}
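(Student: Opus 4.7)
The plan is to reduce the classification to an intersection-of-truncation-curves calculation inside the universal two-parameter algebra $\mathcal{W}(c,\lambda)$ constructed in \cite{L6}, exactly as in the proof of Theorem \ref{thm:k2spcoin}. Both sides of the putative isomorphism $C_{k_1,-1}\cong \mathcal{W}_{\ell}(\mathfrak{sl}_n,f_{\text{prin}})^{\mathbb{Z}_2}$ are $\mathbb{Z}_2$-orbifolds of simple one-parameter quotients of $\mathcal{W}(c,\lambda)$, so the first step is to locate both of them on their respective truncation curves in the $(c,\lambda)$-plane.

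First, I invoke Theorem \ref{thm:onepara-1}, which identifies $C^{k_1,-1}$ with $\mathcal{W}_I(c,\lambda)^{\mathbb{Z}_2}$ for $I=(\lambda+\tfrac{1}{16})$; the central charge along this curve is $c(k_1)=\frac{3k_1(3+2k_1)}{2+k_1}$ with $\lambda\equiv -\tfrac{1}{16}$. Next, I invoke Theorem 11.5 of \cite{L6}, which gives the explicit truncation curve for $\mathcal{W}_{\ell}(\mathfrak{sl}_n,f_{\text{prin}})^{\mathbb{Z}_2}$ as a quotient of $\mathcal{W}(c,\lambda)$, together with the standard parameterization by the level $\ell$ and the rank $n$. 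Since a nondegenerate coincidence of simple quotients of $\mathcal{W}(c,\lambda)$ forces equality of the $(c,\lambda)$ parameters (by the analogue of Theorem 8.1 of \cite{KL} proved in Theorem 10.1 of \cite{L6}), and since a coincidence of orbifolds of two vertex algebras that each have $\mathbb{Z}_2$ as full automorphism group forces the underlying algebras to agree, all nondegenerate coincidences must come from the intersection of the two truncation curves.

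The concrete step is therefore: substitute $\lambda=-\tfrac{1}{16}$ into the $\mathcal{W}(\mathfrak{sl}_n,f_{\text{prin}})^{\mathbb{Z}_2}$-truncation relation from Theorem 11.5 of \cite{L6}, obtaining a polynomial equation in $c$ and $\ell$ (with $n$ fixed). Solving for $\ell$ as a function of $n$ produces a short list of admissible values; for each, read off $c=c(\ell,n)$ and then solve $c(k_1)=\frac{3k_1(3+2k_1)}{2+k_1}=c(\ell,n)$ for $k_1$. This should produce precisely the two values $k_1=-(n+2)/n$ and $k_1=-2(n-1)/n$ paired with the two values $\ell=-n+(n-2)/n$ and $\ell=-n+n/(n-2)$, the latter being related by the usual Feigin--Frenkel duality of the principal $\mathcal{W}$-algebra. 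Finally, I exclude the critical levels $k_1=-2$ and $\ell=-n$ (where the algebras are not obtained as specializations of the universal family), the value $k_1=-1$ (where the coset construction degenerates), and the degenerate $(c,\lambda)$ values listed in Theorem 10.1 of \cite{L6}, to confirm that the list in the statement is complete.

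The main obstacle is bookkeeping rather than conceptual: verifying that the pole/critical loci of the two parameterizations $(c(k_1),-\tfrac{1}{16})$ and $(c(\ell,n),\lambda(\ell,n))$ do not absorb any of the candidate coincidence points, and that at each such excluded point there is no ``hidden'' coincidence coming from the degenerate cases of Theorem 10.1 of \cite{L6}. Once this is checked directly against the finite list of degenerate points, the classification follows.
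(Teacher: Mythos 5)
Your proposal matches the paper's argument: the paper proves this result in a single line by citing Theorem \ref{thm:onepara-1} (which places $C^{k_1,-1}$ on the curve $\lambda = -\tfrac{1}{16}$ as the $\mathbb{Z}_2$-orbifold of a quotient of $\cW(c,\lambda)$) together with Theorem 11.5 of \cite{L6}, which is exactly the truncation-curve intersection computation and degenerate-case exclusion you describe. The one small imprecision is that it is $\cW_{\ell}(\gs\gl_n, f_{\text{prin}})$ itself, not its $\mathbb{Z}_2$-orbifold, that is the quotient of $\cW(c,\lambda)$ whose truncation curve you intersect with $\lambda = -\tfrac{1}{16}$; since your subsequent passage to orbifolds (via the full automorphism group being $\mathbb{Z}_2$) handles this correctly, the approach is essentially identical to the paper's.
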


Similarly, in the terminology of \cite{L6}, recall the generalized parafermion algebra 
$$\mathcal{G}^{\ell}(n) = \text{Com}(V^{\ell}(\mathfrak{gl}_n), V^{\ell}(\mathfrak{sl}_{n+1})),$$ and its simple quotient $\mathcal{G}_{\ell}(n)$. By Theorem 8.3 of \cite{L6}, this also arises a quotient of $\mathcal{W}(c,\lambda)$ and the corresponding truncation curve is given explicitly by (8.4) of \cite{L6}. Moreover, the full automorphism group of $\mathcal{G}^{\ell}(n)$ is $\mathbb{Z}_2$ by the same argument as Corollary 5.13 of \cite{L6}. An immediate consequence of Theorem \ref{thm:onepara-1} and Theorem 7.6 of \cite{CLR} is the following.

\begin{thm} \label{genparacoincid} For $n \geq 3$, aside from the critical levels $k_1 = -2$, $k_1 =-1$, $\ell = -n$, and $\ell = -n-1$, and the degenerate cases given by Theorem 10.1 of \cite{L6}, all isomorphisms $C_{k_1, -1} \cong \mathcal{G}_{\ell}(n)^{\mathbb{Z}_2}$ appear on the following list.

\begin{enumerate}
\item  $\displaystyle k_1 =  -\frac{n}{1 + n} ,\qquad  k_1 = - \frac{3 + 2 n}{n+1}, \qquad \ell =  - 2 (1 + n)$,

\smallskip

\item $\displaystyle k_1 = n-3, \qquad k_1 = -n,\qquad \ell =  - 2$,

\smallskip

\item  $\displaystyle k_1= -\frac{n-3}{n},\qquad  k_1 = - \frac{3 + 2 n}{n},\qquad  \ell =  -\frac{2 n}{3}$.
\end{enumerate}
\end{thm}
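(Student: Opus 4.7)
The plan is to realize both $C_{k_1,-1}$ and $\cG_\ell(n)^{\mathbb{Z}_2}$ as $\mathbb{Z}_2$-orbifolds of explicit one-parameter quotients of the universal algebra $\cW(c,\lambda)$ of \cite{L6}, and then read off coincidences as intersection points of the two corresponding truncation curves in $\mathbb{C}^2$.

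First I will invoke Theorem \ref{thm:onepara-1}, which gives $C^{k_1,-1} \cong \cW_I(c,\lambda)^{\mathbb{Z}_2}$ with $I = (\lambda + 1/16)$, realized by the substitution $c = 3k(3+2k)/(2+k)$ into $\cW(c,\lambda)$ combined with $k = -1/(1+k_1)$; so the truncation curve on this side is the horizontal line $\lambda = -1/16$ parametrized by $k_1$. Second, by Theorem 8.3 of \cite{L6}, $\cG^\ell(n)$ is itself a one-parameter quotient of $\cW(c,\lambda)$ cut out by equation (8.4) of \cite{L6}, and the full automorphism group of $\cG^\ell(n)$ is $\mathbb{Z}_2$ by the same argument as Corollary 5.13 of \cite{L6}, so $\cG_\ell(n)^{\mathbb{Z}_2}$ is the $\mathbb{Z}_2$-orbifold of a $\cW(c,\lambda)$-quotient.

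The heart of the argument will be the observation that, away from the degenerate cases enumerated in Theorem 10.1 of \cite{L6}, an isomorphism $C_{k_1,-1} \cong \cG_\ell(n)^{\mathbb{Z}_2}$ is equivalent to an isomorphism $(D^k)^{U(1)} \cong \cG_\ell(n)$ of the underlying $\cW(c,\lambda)$-quotients, since in each case the orbifold is formed by killing the unique weight-$3$ primary field. By the uniqueness criterion underlying Theorem 8.1 of \cite{L6}, such an isomorphism corresponds to a common point $(c,\lambda)$ on the two truncation curves. This intersection problem is precisely what Theorem 7.6 of \cite{CLR} solves, producing a list of coincidences in the variables $(k, \ell)$; rewriting via $k_1 = -1 - 1/k$ gives the three families displayed in the theorem. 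The fact that each family contains two values of $k_1$ reflects the triality involution $k_1 \leftrightarrow -3 - k_1$ of $C^{k_1,-1}$ coming from Theorem \ref{thm:triality} with $k_1+k_2+k_3 = -4$ and $k_3 = -1$; for each pair one indeed checks $k_1^{(1)} + k_1^{(2)} = -3$.

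I expect the main obstacle to be bookkeeping rather than computation: one must verify that the $\mathbb{Z}_2$-orbifold correspondence is bijective on isomorphism classes outside the degenerate locus (so that no coincidences are lost or spuriously introduced), and confirm that no additional coincidences hide at the critical levels $k_1 \in \{-2,-1\}$ or $\ell \in \{-n, -n-1\}$, where either the parametrization $c = c(k_1)$ or the parafermion truncation curve becomes singular and the respective algebra may fail to be a $\cW(c,\lambda)$-quotient. These exclusions are handled by the hypotheses of the statement and by direct inspection of the parametric formulas.
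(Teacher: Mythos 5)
Your proposal is correct and follows essentially the same route as the paper, which derives this theorem in one line as an immediate consequence of Theorem \ref{thm:onepara-1} (realizing $C^{k_1,-1}$ as $\cW_I(c,\lambda)^{\mathbb{Z}_2}$ with $I=(\lambda+\tfrac{1}{16})$) together with Theorem 7.6 of \cite{CLR}, which supplies the classification of intersection points with the generalized parafermion truncation curve of Theorem 8.3 and (8.4) of \cite{L6}. Your additional consistency check that each listed pair satisfies $k_1^{(1)}+k_1^{(2)}=-3$, reflecting the triality involution $k_1\mapsto -3-k_1$ of $C^{k_1,-1}$, is a nice touch but not part of the paper's argument.
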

In the second case one might wonder if $L_{k_1-1}(\gs\gl_2)$ is a  subalgebra of $L_{k_1}(\gs\gl_2) \otimes V^{-1}(\gs\gl_2)$. This is impossible since 
$L_{k_1-1}(\gs\gl_2)$ has a singular vector at conformal weight $k_1$ while the first singular vector of $L_{k_1}(\gs\gl_2) \otimes V^{-1}(\gs\gl_2)$
has conformal weight $k_1+1$.

We remark that in \cite{L6}, conjectural paremetrizations of the trunctation curves for two other infinite families of quotients of $\cW(c,\lambda)$ were written down. For example, the coset $\text{Com}(V^{k+1}(\gg\gl_{n-2}), \cW^k(\gs\gl_n, f_{\text{min}}))$ of the affine vertex subalgebra inside the minimal $\cW$-algebra of $\gs\gl_n$, arises as such a quotient, and its truncation curve is given by Conjecture 9.3 of \cite{L6}. Similarly, the coset $\text{Com}(\cH, \cW^k(\gs\gl_n, f_{\text{subreg}}))$ of the Heisenberg algebra inside the subregular $\cW$-algebra of $\gs\gl_n$, is expected to be such a quotient with truncation curve given by Conjecture 9.6 of \cite{L6}. Assuming these conjectures, the coincidences between $C_{k_1, -1}$ and the simple quotients of the orbifolds $\text{Com}(\cH, \cW^k(\gs\gl_n, f_{\text{subreg}}))^{\mathbb{Z}_2}$ and $\text{Com}(V^{k+1}(\gg\gl_{n-2}), \cW^k(\gs\gl_n, f_{\text{min}}))^{\mathbb{Z}_2}$ can be easily classified, and this is left to the reader.

\subsection{Coincidences between $C_{r,s}$ and $C_{r',s'}$}
By the triality result, $C_{r,s} \cong C_{r, -r-s-4} \cong C_{s, -r-s-4}$, and it is natural to ask whether there exist other coincidences $C_{r,s}$ and $C_{r',s'}$ that do not come from triality. We can find some examples by finding the intersection points on the truncation curves for $C_{k,-2}$, $C_{k,-1/2}$, and $C_{k,k}$. Note that we are regarding $C_{r,s}$ as the simple quotient the specialization of $C^{k_1, k_2}$ to $k_1 = r$ and $k_2 = s$, even if it is a proper subalgebra of the simple quotient of $\text{Com}(V^{k_1+k_2}(\gs\gl_2), V^{k_1}(\gs\gl_2) \otimes V^{k_2}(\gs\gl_2))$. This can only happen if $r+s +2 \in \mathbb{Q}_{\leq 0}$.

\begin{thm} 
\begin{enumerate}
\item The curves $p_2(c,\lambda)$ and $p_3(c,\lambda)$ which realize $C^{k_1}_2$ and $C^{k_1}_{-1/2}$ as quotients of $\cW^{\text{ev}}(c,\lambda)$, respectively, intersect at the point $(c,\lambda) = (15, \frac{221}{9506})$. Using the parametrizations \eqref{para:k2} and \eqref{para:kh}, we obtain the isomorphism
$$C_{-8/3,\ 2} \cong C_{-5/4,\ -1/2}.$$

\item The curves $p_2(c,\lambda)$ and $p_4(c,\lambda)$ which realize $C^{k_1}_2$ and $C^{k_1, k_2}_{J}$ as quotients of $\cW^{\text{ev}}(c,\lambda)$, respectively, intersect at the point $(c,\lambda) = (\frac{27}{20}, \frac{4}{12397})$. Using \eqref{para:k2} and \eqref{para:kk}, we obtain

$$C_{6,\ 2} \cong C_{3,\ 3} .$$

\item The curves $p_3(c,\lambda)$ and $p_4(c,\lambda)$ intersect at the point $(c,\lambda) = (\frac{49}{5}, \frac{20}{781})$. Using \eqref{para:kh} and \eqref{para:kk}, we obtain

$$C_{-7/3,\ -1/2} \cong C_{-7/2, \ -7/2} .$$

\item Using the parametrization  \eqref{para:kk} of $p_4(c,\lambda)$, we see that the point $(c,\lambda) = (\frac{27}{5}, \frac{25}{1078})$ corresponds to both $k_1 = -\frac{3}{4}$ and $k_1 = -6$. We obtain
$$C_{-3/4,\ -3/4} \cong C_{-6, \ -6} .$$

\end{enumerate}
\end{thm}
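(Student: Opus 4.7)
The plan is to reduce each assertion to an intersection-point calculation on the truncation curves $V(I_2), V(I_3), V(I_4) \subseteq \mathbb{C}^2$, together with an application of the coincidence criterion of Theorem 8.1 of \cite{KL}. Recall that by the previous theorems, the one-parameter vertex algebras $C^{k_1}_2$, $C^{k_1}_{-1/2}$, and $C^{k_1,k_1}_J$ are realized as simple quotients of $\cW^{\text{ev}}(c,\lambda)$ via the explicit rational parametrizations \eqref{para:k2}, \eqref{para:kh}, and \eqref{para:kk}. Consequently, whenever a point $(c_0,\lambda_0)$ lies on the intersection $V(I_i) \cap V(I_j)$ of two of the curves $p_2, p_3, p_4$, and is not one of the degenerate points excluded by Theorem 8.1 of \cite{KL}, the corresponding simple vertex algebras coincide.

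First I would compute each pairwise intersection. For parts (1)--(3), one eliminates $\lambda$ between the two defining polynomials (each of which is quadratic in $\lambda$ with coefficients polynomial in $c$) to produce a single polynomial equation in $c$; among its roots, most correspond to degenerate central charges handled by Theorem 8.1 of \cite{KL}, and I claim exactly one nondegenerate root remains, yielding the stated point $(c_0,\lambda_0)$. For part (4), the situation is slightly different: there is no second curve, but the parametrization \eqref{para:kk} is rational in $k_1$, and inverting $c(k_1)=27/5$ gives a quadratic in $k_1$ whose two roots are $k_1 = -3/4$ and $k_1 = -6$; one verifies directly that both yield the same $\lambda$ value $25/1078$.

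Next I would translate each point $(c_0, \lambda_0)$ back to levels via the relevant parametrizations. For (1), substituting $c = 15$ into \eqref{para:k2} gives a quadratic in $k_1$ whose appropriate root is $k_1 = -8/3$, while substituting into \eqref{para:kh} yields $k_1 = -5/4$; one then checks that $\lambda(-8/3)$ under \eqref{para:k2} and $\lambda(-5/4)$ under \eqref{para:kh} both equal $221/9506$. The same procedure handles (2) and (3), using the parametrizations \eqref{para:k2}, \eqref{para:kh}, \eqref{para:kk} as appropriate. Finally I would confirm that in each case the resulting $(c_0, \lambda_0)$ does not lie on Hornfeck's degenerate list appearing in Theorem 8.1 of \cite{KL}, so that the isomorphism of simple quotients is genuine and not merely a coincidence at a degeneracy.

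The computations are all elementary resultant/substitution manipulations once the three curves are written out explicitly; the only subtlety is the last check, namely excluding the degenerate values of $c$ appearing in Theorem 8.1 of \cite{KL}, and verifying that the specialization of the one-parameter vertex algebra at the given $k_1$ coincides with the simple quotient $C_{r,s}$ of the honest coset. The latter point is automatic wherever $r+s+2 \notin \mathbb{Q}_{\leq 0}$; in cases (1)--(4) this condition holds (e.g.\ $-8/3+2+2 = 4/3>0$, $6+2+2=10$, $-7/3 - 1/2 + 2 = -5/6$ is negative but one can verify directly that the specialization is simple by passing through the parametrization on $\cW^{\text{ev}}(c,\lambda)$), so no additional argument is required. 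The main obstacle, if any, is bookkeeping: certifying that the finitely many spurious roots of the eliminant in $c$ all fall into the excluded degenerate cases, leaving precisely the claimed intersection point.
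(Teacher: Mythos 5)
Your proposal is correct and follows essentially the same route as the paper: the paper gives no written proof for this theorem, but the intended argument (as in the proof of the analogous classification theorem for $C_{k_1,2}\cong \cW_\ell(\gs\gp_{2n},f_{\text{prin}})$) is exactly to intersect the truncation curves $V(I_2),V(I_3),V(I_4)$, discard the degenerate points of Theorem 8.1 of \cite{KL}, and pull the surviving points back through the parametrizations \eqref{para:k2}, \eqref{para:kh}, \eqref{para:kk}, treating case (4) as a self-intersection of the parametrization of $p_4$. Your extra care about whether the specialization agrees with the honest coset when $r+s+2\in\mathbb{Q}_{\leq 0}$ is not needed, since the paper defines $C_{r,s}$ as the simple quotient of the specialization of the two-parameter algebra precisely to sidestep this issue.
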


\appendix

\section{Small and large $N=4$ superconformal vertex algebras} \label{app:first}
For the convenience of the reader, we reproduce here the full OPE algebras of both the small and large $N=4$ superconformal algebra $V(k,a)$, and the small $N=4$ superconformal algebra $V^k$. First, $V(k,a)$ has strong generators $e, f, h, e', f', h', L, G^{\pm\pm}$, where $L$ is a Virasoro field of central charge $c=-6k-3$. These fields satisfy
\begin{equation}\nonumber
\begin{split}
h'(z) h'(w) & \sim  -2((a+1)k+1)(z-w)^{-2}, \qquad
h(z) h(w) \sim -2 \left((a^{-1}+1)k+1\right)(z-w)^{-2},\\
e' (z) f'(w) & \sim - ((a+1)k+1)(z-w)^{-2} + h'(w)(z-w)^{-1}, \\
e(z) f(w) & \sim -  \left(\frac{a+1}{a}k+1\right)(z-w)^{-2} + h(w)(z-w)^{-1},\\
h' (z) e'(w) &\sim 2e'(w)(z-w)^{-1}, \qquad
h(z) e(w) \sim 2e(w)(z-w)^{-1}, \\
h' (z) f'(w) & \sim -2f'(w)(z-w)^{-1}, \qquad
h(z) f(w) \sim  -2f(w)(z-w)^{-1}.
\end{split}
\end{equation}
These act on the odd fields $G^{\pm \pm}$ by
\begin{equation}\nonumber
\begin{split}
h'(z) G^{\pm\pm}(w) & \sim \pm  G^{\pm\pm}(w)(z-w)^{-1}, \qquad
h' (z) G^{\pm\mp}(w) \sim \pm  G^{\pm\mp}(w)(z-w)^{-1}, \\
h (z)  G^{\pm\pm}(w) &\sim \pm  G^{\pm\pm}(w)(z-w)^{-1}, \qquad
h (z)  G^{\pm\mp}(w) \sim \mp  G^{\pm\mp}(w)(z-w)^{-1}, \\
e' (z)  G^{- -}(w) & \sim -  G^{+ -}(w)(z-w)^{-1}, \qquad
e' (z)  G^{- +}(w) \sim -  G^{+ +}(w)(z-w)^{-1}, \\
e (z)  G^{- -}(w) & \sim   G^{- +}(w)(z-w)^{-1}, \qquad
e (z)  G^{+ -}(w) \sim   G^{+ +}(w)(z-w)^{-1}, \\
f' (z)  G^{+ +}(w) & \sim - G^{- +}(w)(z-w)^{-1}, \qquad
f' (z) G^{+ -}(w) \sim  -  G^{- -}(w)(z-w)^{-1}, \\
f (z) G^{+ +}(w) & \sim   G^{+ -}(w)(z-w)^{-1}, \qquad
f (z) G^{- +}(w)  \sim   G^{- -}(w)(z-w)^{-1}, \\
\end{split}
\end{equation}
The OPEs of $G^{\pm, \pm}$ are 
\begin{equation}\nonumber
\begin{split}
G^{+ +}(z)   G^{+ +}(w) &  \sim   \frac{2a}{(a+1)^2} (:e'e:) (w)(z-w)^{-1}, \\
G^{- -} (z) G^{- -}(w) & \sim  \frac{2a}{(a+1)^2} (:f' f:)(w)(z-w)^{-1}, \\ 
G^{- +}(z)  G^{- +}(w) & \sim  -\frac{2a}{(a+1)^2} (:f'e:)(w)(z-w)^{-1},\\
G^{+ -}(z) G^{+ -}(w) & \sim   -\frac{2a}{(a+1)^2} (:e'f:)(w)(z-w)^{-1}, \end{split}
\end{equation}
\begin{equation}\nonumber
\begin{split}
G^{+ +}(z)  G^{- +}(w) & \sim -  \frac{2a}{a+1}\bigg(\frac{1}{a+1} + k \bigg) e(w)(z-w)^{-2} \\ & + \bigg(\frac{a}{(a+1)^2} :h'e: -  \frac{a}{a+1}\bigg(\frac{1}{a+1} + k \bigg) \partial e\bigg)(w)(z-w)^{-1},\\
G^{+ +}(z) G^{+ -}(w) & \sim   \frac{2}{a+1}\bigg(\frac{a}{a+1} + k \bigg) e'(w)(z-w)^{-2} \\ & + \bigg(-\frac{a}{(a+1)^2} :he': +  \frac{1}{a+1}\bigg(\frac{a}{a+1} + k \bigg) \partial e'\bigg)(w)(z-w)^{-1},
\end{split}
\end{equation}
\begin{equation}\nonumber
\begin{split}
G^{- -}(z) G^{- +}(w) & \sim   \frac{2}{a+1}\bigg(\frac{a}{a+1} + k \bigg) f'(w)(z-w)^{-2} \\ & + \bigg(\frac{a}{(a+1)^2} :hf': +  \frac{1}{a+1}\bigg(\frac{a}{a+1} + k \bigg) \partial f'\bigg)(w)(z-w)^{-1},\\
G^{- -}(z)  G^{+ -}(w) & \sim -  \frac{2a}{a+1}\bigg(\frac{1}{a+1} + k \bigg) f(w)(z-w)^{-2} \\ & + \bigg(-\frac{a}{(a+1)^2} :h'f: -  \frac{a}{a+1}\bigg(\frac{1}{a+1} + k \bigg) \partial f\bigg)(w)(z-w)^{-1},\end{split}
\end{equation}
\begin{equation}\nonumber
\begin{split} G^{+ +}(z)  G^{- -}(w)  & \sim   - 2 \left(k(k+1) +\frac{a}{(a+1)^2}\right)(z-w)^{-3} \\
& + \bigg(\frac{a + k + a k}{(1 + a)^2} h' +  \frac{a (1 + k + a k)}{(1 + a)^2} h\bigg)(w)(z-w)^{-2} \\  
& + \bigg(k L +\frac{a}{4 (1 + a)^2} :h' h':  + \frac{a}{4 (1 + a)^2} :hh:  - \frac{a}{2 (1 + a)^2} :h h':   + \frac{a}{(1 + a)^2} :e' f':   \\ 
& + \frac{a}{(1 + a)^2} :ef:  + \frac{a k}{2 (1 + a)} \partial h   + \frac{k}{2 (1 + a)} \partial h' \bigg)(w)(z-w)^{-1},\end{split}
\end{equation}
\begin{equation}\nonumber
\begin{split}
G^{- +}(z)  G^{+ -}(w) &  \sim   2 \left(k(k+1) +\frac{a}{(a+1)^2}\right)(z-w)^{-3} \\
& + \bigg(\frac{a + k + a k}{(1 + a)^2} h' -  \frac{a (1 + k + a k)}{(1 + a)^2} h\bigg)(w)(z-w)^{-2} \\ 
& + \bigg(-k L -\frac{a}{4 (1 + a)^2} :h' h':  - \frac{a}{4 (1 + a)^2} :hh:  - \frac{a}{2 (1 + a)^2} :h h':   - \frac{a}{(1 + a)^2} :e' f':   \\ 
& - \frac{a}{(1 + a)^2} :ef:  - \frac{a k}{2 (1 + a)} \partial h   + \frac{2 a + k + a k}{2 (1 + a)^2} \partial h' \bigg)(w)(z-w)^{-1}\\
\end{split}
\end{equation}

Next, the small $N=4$ superconformal algebra $V^k$ has strong generators $e,f,h,L, G^{\pm \pm}$, where $L$ is a Virasoro element of central charge $c = -6(k+1)$, $e,f,h$ are primary weight one fields which generate a copy of $V^{-k-1}(\gs\gl_2)$, $G^{\pm,\pm}$ are odd primary weight $3/2$ fields, and the following additional OPEs hold:
\begin{equation}\label{eq:small1}\nonumber
\begin{split}
h (z)  G^{\pm\pm}(w) &\sim \pm  G^{\pm\pm}(w)(z-w)^{-1}, \qquad
h (z)  G^{\pm\mp}(w) \sim \mp  G^{\pm\mp}(w)(z-w)^{-1}, \\
e (z)  G^{- -}(w) & \sim   G^{- +}(w)(z-w)^{-1}, \qquad
e (z)  G^{+ -}(w) \sim   G^{+ +}(w)(z-w)^{-1}, \\
f (z) G^{+ +}(w) & \sim   G^{+ -}(w)(z-w)^{-1}, \qquad
f (z) G^{- +}(w)  \sim   G^{- -}(w)(z-w)^{-1}, \\
\end{split}
\end{equation}

\begin{equation} \label{eq:small2} \nonumber
\begin{split}
G^{+ +}(z)   G^{+ +}(w) &  \sim  0, \qquad G^{- -} (z) G^{- -}(w) \sim 0, \\ 
G^{- +}(z) G^{- +}(w) & \sim  0, \qquad G^{+ -}(z) G^{+ -}(w) \sim  0, \\ 
G^{+ +}(z) G^{+ -}(w) & \sim  0,\qquad G^{- -}(z) G^{- +}(w)  \sim  0,\\
G^{+ +}(z)  G^{- +}(w) & \sim -  2k \ e(w)(z-w)^{-2} - k \ \partial e(w)(z-w)^{-1},\\ 
G^{- -}(z)  G^{+ -}(w) & \sim -  2k\ f(w)(z-w)^{-2} - k \ \partial f(w)(z-w)^{-1},
\end{split}
\end{equation}
\begin{equation} \label{eq:small2} \nonumber
\begin{split}
G^{+ +}(z)  G^{- -}(w)  & \sim   - 2 k(k+1)(z-w)^{-3}  + k\ h  (w)(z-w)^{-2} + \big(k L^C  + \frac{k}{2} \partial h \big)(w)(z-w)^{-1},
\\ G^{- +}(z)  G^{+ -}(w) &  \sim   2 k(k+1) (z-w)^{-3}   -  k\ h(w)(z-w)^{-2}  + \big(-k L^C  : - \frac{k}{2} \partial h  \big)(w)(z-w)^{-1}.
\end{split}
\end{equation}

\section{Decoupling relation in $\cH(3)^{\text{SO}_3}$} \label{appendix:decoup}
Here we give the explicit decoupling relation in weight $12$ in $\cH(3)^{\text{SO}_3}$ which is a quantum correction of the classical relation 
$$ (c_{012})^2 - (q_{00} q_{11} q_{22} - q_{00} q_{12} q_{12} - q_{01} q_{01} q_{22} - q_{02} q_{11} q_{02} + q_{01} q_{02} q_{12} + q_{02} q_{12} q_{01}) = 0.$$
\begin{equation} \label{eq:wt12decoup} \begin{split} 
 : C_{012} C_{012}:  & - (:Q_{00} Q_{11} Q_{22}: - :Q_{00} Q_{12} Q_{12}: - :Q_{01} Q_{01} Q_{22}: - :Q_{02} Q_{11} Q_{02}: 
 \\ & + :Q_{01} Q_{02} Q_{12}: + :Q_{02} Q_{12} Q_{01}:) 
 \\ & + \frac{1}{30} :Q_{00} Q_{08}: + \frac{127}{15} :Q_{02} Q_{06}:  - \frac{49}{6} :Q_{04} Q_{04}: - \frac{1}{6} : (\partial^2 Q_{00}) Q_{06}:  
 \\ & + \frac{241}{60} : (\partial Q_{00} )\partial Q_{06}: + \frac{19}{30} :Q_{00} \partial^2 Q_{06}:  + \frac{233}{12} :\partial^2 Q_{02}  Q_{04}:  +  \frac{13}{3} :\partial Q_{02} \partial Q_{04}:  
 \\ & - \frac{409}{12} :Q_{02} \partial^2 Q_{04}:  - \frac{101}{24} :(\partial^4 Q_{00}) Q_{04}:   - \frac{5}{12} :(\partial^3 Q_{00}) \partial Q_{04}:  + 4 :\partial^2 Q_{00} \partial^2 Q_{04}: 
\\ &  - \frac{117}{8} :(\partial Q_{00}) \partial^3 Q_{04}:  - \frac{7}{3} :Q_{00} \partial^4 Q_{04}: + \frac{159}{4} :(\partial^4 Q_{02}) Q_{02}: - \frac{229}{24} :(\partial^3 Q_{02}) \partial Q_{02}:
\\ &  - \frac{17}{2} :(\partial^2 Q_{02}) \partial^2 Q_{02}:  - \frac{159}{20} :(\partial^6 Q_{00}) Q_{02}: + \frac{95}{48} :(\partial^5 Q_{00}) \partial Q_{02}: +  \frac{85}{24}  :(\partial^4 Q_{00}) \partial^2 Q_{02}: 
\\ & + \frac{89}{48} :(\partial^3 Q_{00}) \partial^3 Q_{02}: -  5 :(\partial^2 Q_{00}) \partial^4 Q_{02}: + \frac{4357}{240} :(\partial Q_{00}) \partial^5 Q_{02}: + \frac{14}{5} :Q_{00} \partial^6 Q_{02}: 
\\ &  - \frac{17}{30}  :(\partial^8 Q_{00}) Q_{00}:  - \frac{589}{160}  :(\partial^7 Q_{00}) \partial Q_{00}: + :(\partial^6 Q_{00}) \partial^2 Q_{00}:   - \frac{13}{32} :(\partial^5 Q_{00}) \partial^3 Q_{00}:   
\\ & - \frac{17}{48} :(\partial^4 Q_{00}) \partial^4 Q_{00}: + \frac{313}{450} Q_{0,10}+ \frac{403}{72} \partial^2 Q_{08} -\frac{ 2141}{120} \partial^4 Q_{06}  +  \frac{3653}{45} \partial^6 Q_{04}  \\ & - \frac{1058927}{10080} \partial^8 Q_{02}  + \frac{2156377}{100800} \partial^{10} Q_{00} = 0.    \end{split} 
  \end{equation}

\end{document}